\newtheorem{thm}{Theorem}[section]
\newtheorem {asp}{Assumption}[section]
\newtheorem {conj}{Conjecture}[section]
\newtheorem{rmk}{Remark}[section]
\newtheorem{prop}{Proposition}[section]
\theoremstyle{definition}
\theoremstyle{remark}
\numberwithin{equation}{section}
\newcommand{\F}{\mathcal{F}}
\newcommand{\E}{\mathbb{E}}
\newcommand{\N}{\mathbb{N}}
\newcommand{\PP}{\mathbb{P}}
\newcommand{\R}{\mathbb{R}}
\numberwithin{equation}{section}
\newcommand{\vp}{\varphi}
\newcommand{\UM}{\mathfrak U_{sm}}
\newcommand{\op}{{\mathcal L}}
\newcommand{\bed}{\begin{displaymath}}
\newcommand{\eed}{\end{displaymath}}
\newcommand{\bea}{\bed\begin{array}{rl}}
\newcommand{\eea}{\end{array}\eed}
\newcommand{\barray}{\begin{array}{ll}}
\newcommand{\earray}{\end{array}}
\def\bar{\overline}
\def\a.s{\text{\;a.s.\;}}
\title[Asymptotic harvesting]{Asymptotic harvesting of populations in random environments}
\author[A. Hening]{Alexandru Hening }
\address{Department of Mathematics\\
Tufts University\\
Bromfield-Pearson Hall\\
503 Boston Avenue\\
Medford, MA 02155\\
United States
}
\email{alexandru.hening@tufts.edu}
\author[D. Nguyen]{Dang H. Nguyen }
\thanks{D. Nguyen was in part supported by
 the National Science Foundation
under grant DMS-1207667.}
\address{Department of Mathematics \\
 Wayne State University\\
 Detroit, MI 48202 \\
 United States}
 \email{dangnh.maths@gmail.com}
 \author[S. C. Ungureanu]{Sergiu C. Ungureanu}
\address{Department of Economics\\
City, University of London\\ EC1V 0HB \\
United Kingdom}
\email{Sergiu.Ungureanu.1@city.ac.uk}
  \author[T. K. Wong]{Tak Kwong Wong }
\thanks{T. K. Wong was in part supported by the HKU Seed Fund for Basic Research under the project code 201702159009, and the Start-up Allowance for Croucher Award Recipients.}
\address{Department of Mathematics \\
 The University of Hong Kong\\
 Pokfulam\\
 Hong Kong}
 \email{takkwong@maths.hku.hk}
\keywords {Ergodic control; stochastic harvesting; ergodicity; stochastic logistic model; stochastic environment}
\subjclass[2010]{92D25, 60J70, 60J60}
\begin{document}
\maketitle

\begin{abstract}
We consider the harvesting of a population in a stochastic environment whose dynamics in the absence of harvesting is described by a one dimensional diffusion. Using ergodic optimal control, we find the optimal harvesting strategy which maximizes the asymptotic yield of harvested individuals. To our knowledge, ergodic optimal control has not been used before to study harvesting strategies. However, it is a natural framework because the optimal harvesting strategy will never be such that the population is harvested to extinction -- instead the harvested population converges to a unique invariant probability measure.

When the yield function is the identity, we show that the optimal strategy has a bang-bang property: there exists a threshold $x^*>0$ such that whenever the population is under the threshold the harvesting rate must be zero, whereas when the population is above the threshold the harvesting rate must be at the upper limit. We provide upper and lower bounds on the maximal asymptotic yield, and explore via numerical simulations how the harvesting threshold and the maximal asymptotic yield change with the growth rate, maximal harvesting rate, or the competition rate.

We also show that, if the yield function is $C^2$ and strictly concave, then the optimal harvesting strategy is continuous, whereas when the yield function is convex the optimal strategy is of bang-bang type. This shows that one cannot always expect bang-bang type optimal controls.
\end{abstract}
\tableofcontents

\section{Introduction}

Many species of animals like whales, elephant seals, bisons and rhinoceroses, are at risk of being harvested to extinction (\cite{G71,LB81, LHW93, P06}). Excessive harvesting has already led to both local and gobal extinctions of species (\cite{LES95}). In fact, a significant percentage of the endangered birds and mammals of the world are threatened by harvesting, hunting or other types of overexploitation (\cite{LES95}), and there are similar problems for many species of fish (\cite{hutchings2004marine}). This is why harvesting strategies have to be carefully chosen. After significant harvests, it takes time for the harvested population to get back to the pre-existing level. Moreover, the harvested population fluctuates randomly in time due to \textit{environmental stochasticity}. As a result, an overestimation of the ability of the population to rebound can lead the harvester to overharvest the population to extinction (\cite{LES95}). A less common but nevertheless important problem is an insufficient rate of harvesting. Because of instraspecific competition, the population is bounded in a specific environment, so an extraction rate that is too low would lead to a loss of precious resources. For the same reason, choosing an efficient extraction strategy for valuable species is important (\cite{kokko2001optimal}).

We present a stochastic model of population harvesting and find the \textit{optimal harvesting strategy} that mazimizes the \textit{asmptotic yield} of harvested individuals. We consider a novel framework, the one of \textit{optimal ergodic harvesting}. This is based on the theory of ergodic control (\cite{ABG}). In most stochastic models that exist in the literature, for example \cite{LES95, AS98, LO97}, the population is either assumed to become extinct in finite time, or it can end up being harvested to extinction. In our framework, if the population goes extinct under some harvesting strategy, the asymptotic yield is $0$ and therefore this strategy cannot be optimal. If one wants to ensure that harvested species are preserved, this framework is a natural candidate. Our aim is to present a theory of optimal harvesting that includes the risks of extinction from both environmental noise and harvesting. We assume that the population is homogeneous and can be described by a one dimensional diffusion. The harvesting rate is assumed to be bounded, as infinite harvesting rates would imply an unlimited harvesting capacity, something that is clearly not realistic.

In most cases, environmental noise can be introduced in the system by transforming differential equations into stochastic differential equations (SDE). Such techniques require dealing with significant mathematical difficulties, but their use is not just a case of honoring generality. First, there are direct effects of stochasticity on the predictions of the model, and the parameters quantifying it show up in the results. Second, any realistic biological system will depend on environmental variables that are not, or cannot be, accounted for. The role of stochasticity is to ensure that the solutions proposed are robust to such omissions. For example, if avoiding extinction is important, deterministic models can give misleading solutions even when their parameters are corrected for noise (\cite{smith1978analysis}). The transformation to SDE works especially well when the environmental fluctuations are small and there is no chaos (\cite{LES95}). We focus on models with environmental stochasticity and neglect the \textit{demographic stochasticity} which arises from the randomness of birth and death rates of each indiviual of a population. Throughout the paper we assume that environmental stochasticity mainly affects the growth rate of the population (see \cite{T77, BM77, MBHS78, L81, B02, G88, EHS15, ERSS13, SBA11, HN16} for more details). For computational tractability and for clarity of exposition, we look at a one-dimensional model. Nevertheless, our framework works for any model that can be written as a system of stochastic differential equations  (satisfying some mild assumptions - see \cite{ABG}).

A major limitation of existing models in the literature is the dependence of the optimal solutions on parameters that are hard to quantify. For example, in \cite{LES95} the level at which the population becomes extinct -- the minimal viable population -- must be assumed; without it the yields become infinite. In \cite{AS98} the yield must be time discounted to avoid maximizing over yield infinities, and this requires providing a time value for resources. The minimal viable population is a difficult scientific question (\cite{shaffer1981minimum, traill2007minimum}), and the time value of yields is a difficult economics and policy question, because it implies the comparison of the utility of present and future generations (\cite{dreze1987theory}). In contrast, our model sidesteps the issue by assuming no time preference -- and therefore no bias towards extracting in the present, and resolves the problem of maximizing over infinite yields naturally by looking at asymptotic behavior.

A particular case of our model was studied in \cite{AP81}\footnote{We thank the anonymous referee who has brought the paper to our attention.}. The authors limited themselves to the analysis of harvesting strategies that were of bang-bang type. In \cite{abakuks1979optimal}, one of the co-authors in \cite{AP81} proved that an optimal gathering strategy was necessarily of a bang-bang type in a continuous time Markov chain model, making use of the simplifying assumption of a finite state space. Here, instead, we look at very general possible harvesting strategies in a continuous state stochastic model, and show that the optimal one is of bang-bang type. Our contribution is therefore two-fold. We generalize the setting of \cite{AP81} significantly by looking at very general density-dependent growth rates, not just the logistic case. Moreover, we prove what the authors of \cite{AP81} intuited, namely that the optimal strategy is of bang-bang type; and furthermore that this is true for the larger class of convex yield functions.

Stochastic optimal control applications are common in the finance literature. Following the seminal contributions of \cite{merton1969lifetime, merton1971optimum}, objective functions that are integrals of time discounted instantaneous utility flows are now standard. The crucial simplifying assumption is that of time-additive total utility. The utility flow usually depends on consumption flows, and therefore indirectly on other variables and stochastic constraints. With the time-additive utility assumption, our general yield function can also be interpreted as an instantaneous utility function dependent on yield, and our objective function can be the asymptotic expected utility flow dependent on yield. Because a population stock cannot grow indefinitely in our biological model, we diverge from the general finance literature, where financial returns do not usually depend on the size of the holdings of an individual.

Finally, we generalise a result from one of the stochastic models in \cite{smith1978analysis}, where the equivalent to our yield function has a specific simple form. We show that, when the yield function is weakly convex, the optimal control is bang-bang. However, if the yield function is strictly concave, then the optimal harvesting strategy has to be continuous, in contrast to the bang-bang type optimal strategy we find for a linear yield function. This generalization is useful for economic welfare analysis (a more general form of cost-benefit analysis), which typically relies on a concave utility function, equivalent to the concave yield function herein. In economic models, concavity is assumed to model risk aversion (see \cite[Proposition 6.C.1]{mas1995microeconomic} for justification), and for the convenience of interior solutions to maximisation problems. Concave utility leads to a trade-off between risk and returns in asset choice \cite{merton1971optimum}, so the connection between yield concavity and strategy continuity mentioned above is suggestive of risk management. However, risk management interpretations from the finance literature are not directly applicable here. First, financial asset returns are assumed reasonably to not be decreasing in the asset value owned by investors.\footnote{The assumption may not apply in models with large institutional investors.} Moreover, the risk-return trade-off is captured in models with choice between at least two assets with different risk profiles.\footnote{An ecological model extension that would link this literature to our model would consider optimal extraction policy to maximise a time discounted concave total-yield function when there are at least two populations, situated in different environments with no growth limitation.} If anything, finding a bang-bang optimal strategy when yield is linear is more related to finding corner solutions in maximisation problems with linear utility. A bang-bang strategy uses one of the two extremes of the harvesting rate, depending on the momentary population stock.

The rest of the paper is organized as follows. In Section \ref{s:ergodic} we introduce our model and results. We prove that, if the population in the absence of harvesting survives, the yield function is the identity and the harvesting rate is bounded above by some number $M>0$, then the optimal strategy is always a bang-bang type solution: there exists an $x^*>0$ such that one does not harvest if the current population size lies in the interval $\left[0,x^*\right]$ and harvests at the maximal possible rate, $M$, if the current population size lies in the interval $\left(x^*,\infty \right)$. The proofs of the above results are collected in Appendix \ref{a:proofs}. In Section \ref{s:logistic} we apply our results to the special setting of the logistic Verhulst-Pearl model. In Section \ref{s:cont2} (proofs in Appendix \ref{s:cont}) we show that if the yield function is strictly concave, the optimal harvesting strategy is continuous, and when the yield function is more generally weakly convex, optimal strategy is bang-bang.

Finally, in Section \ref{s:disc} we offer some numerical simulations that show how the optimal harvesting strategies and optimal asymptotic change with respect to the parameters of the model. We also provide a discussion of our results.

\section{Optimal ergodic harvesting}\label{s:ergodic}

We consider a population whose density $\tilde X(t)$ at time $t\geq 0$, in the absence of harvesting, follows the stochastic differential equation (SDE)
\begin{equation}\label{e:logistic}
d\tilde X(t) = \tilde X(t)\mu(\tilde X(t))\,dt + \sigma \tilde X(t)\,dB(t), ~\tilde X(0)=x>0,
\end{equation}
where $(B(t))_{t\geq 0}$ is a standard one dimensional Brownian motion. This describes a population $\tilde X$ with per-capita growth rate given by $\mu(x)>0$ when the density is $\tilde X=x$. The infinitesimal variance of fluctuations in the per-capita growth rate is given by $\sigma^2$.

The following is a standing assumption throughout the paper.
\begin{asp}\label{A:1} The function $\mu:[0,\infty)\to \R$ satisfies:
\begin{itemize}
\item $\mu$ is locally Lipschitz.
\item $\mu$ is decreasing.
\item As $x\to \infty$ we have $\mu(x)\to-\infty$.
\item The function $p(x):=x\mu(x)$ has a unique maximum.
\end{itemize}
\end{asp}

The behavior of \eqref{e:logistic} is not hard to study. In the particular case when $\mu(x)=\bar\mu - \kappa x$ see \cite{EHS15, DP84}. The methods there can be easily adapted to our setting. Alternatively, one could use the general results from \cite{HN16}.
The process $\tilde X$ does not reach $0$ or $\infty$ in finite time and the stochastic growth rate $\mu(0)-\frac{\sigma^2}{2}$
determines the long-term behavior in the following way:
\begin{itemize}
\item If $\mu(0)-\frac{\sigma^2}{2}>0$ and $ \tilde X(0)=x>0$, then $(\tilde X(t))_{t\geq 0}$ converges weakly to its unique invariant probability measure $\nu$ on $(0,\infty)$.
\item If $\mu(0)-\frac{\sigma^2}{2}<0$ and $ \tilde X(0)=x>0$, then $\lim_{t\to \infty} \tilde X(t)=0$ almost surely.
\end{itemize}
We let $\R_+:=[0,\infty)$ and $\R_{++}:=(0,\infty)$ throughout the paper.

Assume that the population is harvested at time $t\geq 0$ at the \textit{stochastic rate} $h(t)\in U:=[0,M]$ for some fixed $M>0$. Adding the harvesting to \eqref{e:logistic} yields the SDE
\begin{equation}\label{e:con-diff}
dX(t) = X(t)(\mu (X(t)) -h(t) )\,dt + \sigma X(t)\,dB(t), ~X(0)=x>0.
\end{equation}

A stochastic process $(h(t))_{t\geq 0}$ taking values in $U$
is said to be an \textit{admissible strategy} if
$(h(t))_{t\geq 0}$ is adapted to the filtration $(\F_t)_{t\geq 0}$ generated by the Brownian motion $(B(t))_{t\geq 0}$.
Let $\mathfrak{U}$ be the class of admissible strategies.
An important subset of $\mathfrak{U}$ is the
class $\UM$ of \textit{stationary Markov strategies},
that is, admissible strategies of the form
$h(t)=v(X(t))$ where $v:\R_{++}\mapsto U$ is a measurable function. By abuse
of terminology, we often refer to the map $v(\cdot)$ as the stationary Markov strategy.
Using a stationary Markov strategy $v(\cdot)$,
\eqref{e:con-diff} becomes
\begin{equation}\label{e:c-harvest}
dX(t) = X(t)(\mu(X(t)) -v(X(t)) )\,dt + \sigma X(t)\,dB(t), ~X(0)=x>0.
\end{equation}

\begin{rmk}
The sigma algebra $\F_t$ gives one the information available from time $0$ to time $t$. An admissible harvesting strategy is therefore a strategy which can take into account all the information from the start of the harvesting to the present. These strategies are much more general than constant strategies. Stationary Markov strategies are the harvesting strategies which only depend on the present state of the population density.
\end{rmk}
We associate with $X(t)$ the family of generators $(\op_u)_{u\in[0,M]}$ defined by their action on $C^2$ functions with compact support in $\R_{++}$ as
\begin{equation}\label{e:gen_u}
\op_u f(x):=x[\mu(x)-u]f_x+\dfrac12\sigma^2 x^2f_{xx}.
\end{equation}
We will call $\Phi:\R_+\to \R_+$ a \textit{yield function} if the following assumption holds.
\begin{asp}\label{A:2} The function $\Phi:\R_+\to \R_+$ satisfies:
\begin{itemize}
\item $\Phi$ is continuous.
\item $\Phi(0)=0$.
\item $\Phi$ has subpolynomial growth that is, there is $n \in \mathbb{N}$ such that $\frac{\Phi(x)}{x^n} \to 0$ for $x\to \infty$.
\end{itemize}
\end{asp}
Our aim is to find the optimal strategy $h(t)$ that almost surely maximizes the \textit{asymptotic yield}
\begin{equation}\label{opt-har}
\liminf_{T\to\infty}\dfrac1T\int_0^T \Phi\Big(X(t)h(t)\Big)\,dt.
\end{equation}

In other words we want to find $v$ such that, for any initial population size $X(0)=x>0$, we have with probability $1$ that
\[
\liminf_{T\to\infty}\dfrac1T\int_0^T \Phi\Big(X(t)v(X(t))\Big)\,dt = \sup_{h\in \mathfrak{U}} \liminf_{T\to\infty}\dfrac1T\int_0^T \Phi\Big(X(t)h(t)\Big)\,dt=:\rho^*.
\]
We note that many of the existing models that look at the optimal harvesting of a population in a stochastic environment (\cite{LO97, AS98, LES95}) assume that the yield function $\Phi$ is the identity i.e. $\Phi(x)=x, x\geq 0$. This assumption is not always justifiable (see \cite{A00}) and as such we present in Section \ref{s:cont2} results for more general functions $\Phi$.
\begin{rmk}\label{r:ext}
We note that if $X$ has an invariant probability measure $\pi$ on $\R_{++}$, then for any $X(0)=x>0$ almost surely
\[
\lim_{T\to\infty}\dfrac1T\int_0^T \Phi\Big(X(t)v(X(t))\Big)\,dt = \int_{\R_{++}} \Phi(xv(x))\pi(dx).
\]
In particular, if $X$ goes \textit{extinct}, that is, for any $X(0)=x>0$ we have with probability $1$
\[
\lim_{t\to\infty}X(t)=0,
\]
then the only invariant ergodic measure of $X$ on $\R_+$ is $\delta_0$ the point mass at $0$, and hence, we get that with probability $1$
\[
\lim_{T\to\infty}\dfrac1T\int_0^T \Phi\Big(X(t)v(X(t))\Big)\,dt =0.
\]
Our method for maximizing the asymptotic yield forces the optimal harvesting to be such that the population persists.
\end{rmk}

\begin{rmk}
By \cite[Theorems 2.2.2 and 2.2.12]{ABG},
the controlled systems \eqref{e:con-diff} and \eqref{e:c-harvest} have unique local solutions on $\R_{++}$
for any admissible control $h(t)$ and stationary Markov control $v$ respectively.
Note that one can find $N>0$ large enough such that
$$
\begin{aligned}
\op_u\left(x+\frac1x\right)=&x(\mu(x)-u)\left(\frac{x^2-1}{x^2}\right)+ \sigma^2x^2\frac{1}{x^3}\\
\leq& N(\sigma^2+M)\left(x+\frac1x\right),\, x\in\R_{++}, u\in U.
\end{aligned}
$$
With this fact in hand, we can use the arguments from \cite[Theorem 3.5]{RK} to obtain
the existence of global solutions on $\R_{++}$ of \eqref{e:con-diff} and \eqref{e:c-harvest}.
In particular we get that
\[
\PP_x\left(X(t)\in \R_{++}, t\geq 0\right)=1, x\in\R_{++}.
\]
\end{rmk}

The main result of the paper is the following.
\begin{restatable}{thm}{main}\label{t:main}
Assume that $\Phi(x)=x, x\in(0,\infty)$ and that the population survives in the absence of harvesting, that is $\mu(0)-\frac{\sigma^2}{2}>0$. Furthermore assume that the drift function $\mu(\cdot)$ satisfies Assumption \ref{A:1}. The optimal control (the optimal harvesting strategy) $v$ has the bang-bang form
\begin{equation}\label{e:v}
\begin{aligned}
v(x)
&= \begin{cases}
0 & \mbox{if $0< x\leq x^*$} \\
M & \mbox{if $x>x^*$}
\end{cases}
\end{aligned}
\end{equation}
for some $x^*\in (0,\infty)$.
Furthermore, we have the following upper bound for the optimal asymptotic yield
\begin{equation}\label{e:yield_bound}
\rho^* \leq \sup_{x\in \R_+} x\mu(x).
\end{equation}
\end{restatable}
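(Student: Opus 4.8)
The plan is to set this up as an ergodic (long-run average) stochastic control problem and invoke the machinery of \cite{ABG}. The first step is to reduce the problem over all admissible strategies $\mathfrak U$ to one over stationary Markov strategies $\UM$: since the controlled diffusion \eqref{e:con-diff} is a nondegenerate one-dimensional diffusion on $\R_{++}$ with the Lyapunov/near-monotonicity estimate already recorded in the excerpt (the bound on $\op_u(x+\tfrac1x)$), the results of \cite{ABG} give that $\rho^*$ is attained within $\UM$, that each admissible $v\in\UM$ under which the population persists induces a unique invariant probability measure $\pi_v$ on $\R_{++}$, and that by Remark \ref{r:ext} the long-run average yield equals $\int_{\R_{++}} x v(x)\,\pi_v(dx)$ almost surely. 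So the problem becomes: maximize $\int x v(x)\,\pi_v(dx)$ over measurable $v:\R_{++}\to[0,M]$.

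The second step is to characterize the optimizer via the associated Hamilton--Jacobi--Bellman (HJB) equation for the ergodic problem. One seeks a constant $\rho$ and a function $V$ solving
\[
\max_{u\in[0,M]}\Big\{\op_u V(x) + x u\Big\} = \rho, \qquad x\in\R_{++},
\]
which, writing out $\op_u$ from \eqref{e:gen_u}, is
\[
x\mu(x)V_x(x) + \tfrac12\sigma^2 x^2 V_{xx}(x) + \max_{u\in[0,M]} u\big(x - x V_x(x)\big) = \rho.
\]
The pointwise maximization over $u$ is linear in $u$, so the maximizer is $u=M$ when $x(1-V_x(x))>0$, i.e. when $V_x(x)<1$, and $u=0$ when $V_x(x)>1$. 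Thus the optimal stationary Markov control is automatically of bang-bang type provided one can show that the set $\{x: V_x(x)<1\}$ is exactly a half-line $(x^*,\infty)$. The third step is therefore the analytic heart of the argument: establish existence of a $C^2$ solution $(\rho,V)$ to the HJB equation (via the vanishing-discount approach of \cite{ABG}, Chapter 3, taking $\alpha\to 0$ in the discounted problems and using the Lyapunov bound to get the needed equicontinuity and tightness), and then prove the monotonicity/threshold structure of $V_x$. For the threshold structure one uses the ODE satisfied by $V$ on each of the two regions together with the structural hypotheses in Assumption \ref{A:1} — $\mu$ decreasing, $p(x)=x\mu(x)$ unimodal, $\mu(x)\to-\infty$ — to rule out the control switching back and forth; the cleanest route is to show $x\mapsto V_x(x)$ crosses the level $1$ exactly once, arguing by a comparison/Sturm-type argument on the linear second-order ODE $\tfrac12\sigma^2 x^2 V_{xx} + x\mu(x)V_x = \rho$ on $(0,x^*)$ and $\tfrac12\sigma^2 x^2 V_{xx} + x(\mu(x)-M)V_x + Mx = \rho$ on $(x^*,\infty)$, matched $C^1$ at $x^*$. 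One must also verify $x^*\in(0,\infty)$ strictly: $x^*>0$ because near $0$ the drift $x\mu(x)$ is positive and harvesting immediately would push the population toward extinction, giving zero yield (so a boundary layer of no harvesting is forced), and $x^*<\infty$ because $\mu(x)\to-\infty$ forces $V_x<1$ for large $x$.

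The fourth and final step is the upper bound $\rho^*\le \sup_{x\in\R_+} x\mu(x)=:p^*$. This follows without the HJB machinery by a direct averaging argument: for any admissible strategy, apply Itô's formula to $\log X(t)$ using \eqref{e:con-diff},
\[
\log X(T) - \log X(0) = \int_0^T\Big(\mu(X(t)) - h(t) - \tfrac{\sigma^2}{2}\Big)dt + \sigma B(T),
\]
so that $\frac1T\int_0^T h(t)\,dt = \frac1T\int_0^T\big(\mu(X(t))-\tfrac{\sigma^2}{2}\big)dt + \frac{\sigma B(T) - \log X(T) + \log X(0)}{T}$. Multiplying by the population is the issue since the yield is $\int X(t)h(t)\,dt$, not $\int h(t)\,dt$; instead one applies Itô to $X(t)$ itself: $dX = X(\mu(X)-h)\,dt + \sigma X\,dB$, hence $\frac1T\big(X(T)-X(0)\big) = \frac1T\int_0^T X(t)\mu(X(t))\,dt - \frac1T\int_0^T X(t)h(t)\,dt + \frac{\sigma}{T}\int_0^T X(t)\,dB(t)$. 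Rearranging,
\[
\frac1T\int_0^T X(t)h(t)\,dt = \frac1T\int_0^T p(X(t))\,dt - \frac{X(T)-X(0)}{T} + \frac{\sigma}{T}\int_0^T X(t)\,dB(t) \le p^* + o(1)
\]
along a subsequence, once one controls the martingale term (its quadratic variation is $\sigma^2\int_0^T X(t)^2\,dt$, and using the moment bounds from the Lyapunov estimate plus the strong law for martingales gives that the last term vanishes a.s.) and notes $X(T)\ge 0$. Taking $\liminf_{T\to\infty}$ yields \eqref{e:yield_bound}. The main obstacle is the third step — proving existence and the single-crossing (threshold) property of the HJB solution; the reduction to $\UM$, the bang-bang dichotomy, and the a priori yield bound are comparatively routine once the framework of \cite{ABG} is in place.
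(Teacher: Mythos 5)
Your overall framework --- reduction to stationary Markov controls via the ergodic-control machinery of \cite{ABG}, the vanishing-discount construction of a $C^2$ solution $(\rho^*,V^*)$ of the ergodic HJB equation, and the observation that the inner maximization is linear in $u$, so the optimal control is $0$ where $V^*_x>1$ and $M$ where $V^*_x<1$ --- is exactly the paper's. Your argument for the bound \eqref{e:yield_bound} is genuinely different and, once the martingale term $\frac{\sigma}{T}\int_0^T X(t)\,dB(t)$ is controlled via the moment estimates and the strong law for martingales, correct: applying It\^o's formula to $X$ itself and using $p(X(t))\le p^*:=\sup_{x}x\mu(x)$ together with $X(T)\ge 0$ gives $\rho^*\le p^*$ directly, for every admissible control, with no PDE input. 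The paper instead extracts this bound as a by-product of its crossing analysis (the case $g(x_\iota)>0$, i.e.\ $\rho>\sup_x x\mu(x)$, is shown to be incompatible with the boundary behaviour of $V^*_x$, forcing $g(x_\iota)\le 0$); your route is arguably cleaner for that particular claim.

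The genuine gap is the single-crossing property, which you correctly identify as the heart of the matter but then only name (``a comparison/Sturm-type argument'') rather than prove. The paper's mechanism is concrete: at any point $x_0$ with $V^*_x(x_0)=1$ the two branches of \eqref{e:HJB_1d} coincide and give $\tfrac12\sigma^2x_0^2V^*_{xx}(x_0)=\rho-x_0\mu(x_0)=:g(x_0)$, so the direction of each crossing of the level $y=1$ is dictated by the sign of $g$ there; unimodality of $p(x)=x\mu(x)$ from Assumption \ref{A:1} makes $\{g<0\}$ a single interval, capping the number of crossings at three with prescribed directions (Proposition \ref{p:crossing}). This is then combined with two boundary results: near $0$, any solution of the first-order linear ODE satisfied by $V^*_x$ on the harvesting branch diverges to $\pm\infty$ (Propositions \ref{p:asymptotic} and \ref{p:zero}), contradicting $0\le V^*_x\le 1$; and near $\infty$, having $V^*_x\ge 1$ on a half-line would violate the subpolynomial growth \eqref{e.growth} or the positivity of $V^*_x$ (Proposition \ref{p:infinity}). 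Only the configuration ``exactly one crossing, from above'' survives, and this is what yields a single threshold $x^*\in(0,\infty)$. Note also that your heuristic for $x^*>0$ --- that harvesting near $0$ would drive the population extinct, giving zero yield --- is false when $\mu(0)>M+\tfrac{\sigma^2}{2}$, since the population then persists even under permanent full harvesting; the correct reason is the divergence of $V^*_x$ at $0^+$ along the harvested branch, which the paper obtains from a three-case asymptotic analysis of the integrating factor according to the sign of $\mu(0)-M$ and $\mu(0)-M-\tfrac{\sigma^2}{2}$. Without these boundary and crossing arguments your proposal establishes the bang-bang dichotomy \eqref{e:v_1d} but not the one-threshold form \eqref{e:v}.
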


\section{Continuous vs bang-bang optimal harvesting strategies}\label{s:cont2}

As showcased in Theorem \ref{t:main}, when $\Phi$ is the identity function the optimal harvesting strategy is of bang-bang type. In Appendix \ref{s:cont} we prove the following result.

\begin{restatable}{thm}{cont}\label{t:cont}
Suppose Assumption \ref{A:1} holds and the yield function satisfies
\begin{enumerate}
\item $\Phi\in C^2(\R_+)$,
\item $\Phi$ is strictly concave.
\end{enumerate}
Then the optimal harvesting strategy is \textbf{continuous} and given by
\[\begin{aligned}
v &= \begin{cases}
0 &\mbox{if $[\Phi']^{-1}(V_x^*(x))\leq 0$},\\
\displaystyle\frac{[\Phi']^{-1}(V_x^*(x))}{x} &\mbox{if $0<[\Phi']^{-1}(V_x^*(x))<xM$},\\
M &\mbox{if $[\Phi']^{-1}(V_x^*(x))\geq xM$}.
\end{cases} \\
\end{aligned}\]
Furthermore, the HJB equation for the system becomes
\begin{equation}\label{e:HJB_1d_2}
\begin{split}
\rho  &= \begin{cases}
x\mu(x)f_x+\dfrac12\sigma^2 x^2f_{xx} & \mbox{if $[\Phi']^{-1}(f_x(x))\leq 0$},\\
x\mu(x)f_x+\dfrac12\sigma^2 x^2f_{xx} -f_x[\Phi']^{-1}(f_x) + \Phi([\Phi']^{-1}(f_x)) &\mbox{if $0<[\Phi']^{-1}(f_x(x))<xM$},\\
x(\mu(x)-M)f_x+\dfrac12\sigma^2 x^2f_{xx}+\Phi(xM) &\mbox{if $[\Phi']^{-1}(f_x(x))\geq xM$.}
\end{cases}
\end{split}
\end{equation}
\end{restatable}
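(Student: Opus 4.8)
The plan is to write down and analyze the Hamilton--Jacobi--Bellman (HJB) equation for the ergodic control problem, in the spirit of \cite{ABG} specialized to the one-dimensional controlled diffusion \eqref{e:con-diff}. Dynamic programming suggests that the optimal long-run average yield $\rho$ together with a function $f$ should solve
\[
\rho \;=\; x\mu(x)f_x+\tfrac12\sigma^2 x^2 f_{xx} + H(x,f_x),\qquad H(x,p):=\sup_{u\in[0,M]}\bigl(\Phi(xu)-xu\,p\bigr),
\]
where $H$ is the Hamiltonian obtained by pointwise maximization over the control. The first step is the elementary but crucial computation of $H$. Putting $y=xu\in[0,xM]$ turns the maximization into $\sup_{y\in[0,xM]}(\Phi(y)-yp)$; since $\Phi''<0$ the integrand is strictly concave in $y$, so its unconstrained maximizer is the unique root of $\Phi'(y)=p$, namely $y=[\Phi']^{-1}(p)$ (well defined because $\Phi'$ is strictly decreasing, hence invertible on its range). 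Projecting this maximizer onto $[0,xM]$ produces exactly the three regimes in the statement, and substituting the corresponding value of $y$ back into $\Phi(y)-yp$ and into $u=y/x$ gives the piecewise HJB equation \eqref{e:HJB_1d_2} together with the candidate feedback law $v$.

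The second step is to produce an actual solution pair $(\rho,V^*)$ of this HJB equation with $V^*\in C^2(\R_{++})$; one then takes $f=V^*$. The standard device is the vanishing-discount procedure: solve the $\alpha$-discounted HJB for its value function $V_\alpha$, use the $\alpha$-uniform estimates afforded by Assumption \ref{A:1} (the Lyapunov bound for $\op_u(x+\tfrac1x)$ recorded in the excerpt already rules out explosion and yields the tightness/properness needed near $0$ and at $\infty$), and pass to a subsequential limit to get $\alpha V_\alpha(x_0)\to\rho$ and $V_\alpha(\cdot)-V_\alpha(x_0)\to V^*(\cdot)$ locally uniformly. Standard regularity theory for the resulting semilinear second-order ODE then upgrades $V^*$ to $C^2$, so in particular $V^*_x$ is continuous; the subpolynomial growth of $\Phi$ from Assumption \ref{A:2} keeps the reward and the limiting objects finite and the problem well posed.

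The third step is the verification argument. The feedback $v$ read off from $V^*$ is a continuous (\emph{a fortiori} measurable) function of $x$, so the Lyapunov bound again gives a unique global solution of the closed-loop equation \eqref{e:c-harvest} on $\R_{++}$ admitting a unique invariant probability measure $\pi_v$. Applying Dynkin's formula to $V^*$ along an arbitrary admissible strategy $h$, using $H(x,V^*_x)\ge \Phi(xh)-xh\,V^*_x$ pointwise, and dividing by $T$ shows that the asymptotic yield of $h$ is at most $\rho$; equality holds for $v$ since $v$ attains the supremum defining $H$, and Remark \ref{r:ext} then identifies $\rho=\rho^*$ and shows $v$ is optimal. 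Continuity of $v$ is now immediate: on the overlap between regimes the formulas agree (when $[\Phi']^{-1}(V^*_x(x))=0$ the middle branch equals $0$, and when $[\Phi']^{-1}(V^*_x(x))=xM$ it equals $M$), while $x\mapsto[\Phi']^{-1}(V^*_x(x))/x$ is continuous on $\R_{++}$ because $V^*_x$ and $[\Phi']^{-1}$ are; hence the three pieces glue into a continuous function, in sharp contrast with the linear case of Theorem \ref{t:main}. As usual in ergodic control, the substantive obstacle is this existence-and-regularity step: obtaining a genuinely $C^2$ solution of the HJB together with the ergodic constant $\rho$, with enough control of the behavior at the degenerate boundary point $0$ and at infinity on the noncompact state space to make the vanishing-discount limit and the verification rigorous. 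Strict concavity of $\Phi$ is precisely what makes everything downstream transparent -- it singles out a unique, continuously varying maximizer in the Hamiltonian -- but it contributes nothing to the PDE estimates, which rest entirely on Assumption \ref{A:1} and the Lyapunov inequality already displayed in the excerpt.
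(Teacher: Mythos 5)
Your proposal is correct and follows essentially the same route as the paper: the paper's proof of Theorem \ref{t:cont} invokes its earlier Theorem \ref{thm2.1d} (proved via exactly the vanishing-discount and verification machinery you sketch) for the $C^2$ solution $(\rho^*,V^*)$ of the HJB equation and the maximum-principle characterization of optimal Markov controls, and then performs the same pointwise maximization of the strictly concave map $\omega\mapsto\Phi(\omega)-V^*_x(x)\,\omega$ over $[0,xM]$, identifying the optimal control as the projection of $[\Phi']^{-1}(V^*_x(x))$ onto that interval and concluding continuity from $V^*\in C^2$. The only cosmetic difference is that the paper packages the existence/regularity/verification steps into a separately stated theorem rather than re-deriving them inside this proof.
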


\begin{rmk}
We cannot find the exact form of the optimal harvesting strategies in this case. Note that in Theorem \ref{t:main} we have $\Phi(x)=x$ which is not strictly concave nor strictly convex.
\end{rmk}

Intuitively, this is not unlike maximising a strictly concave objective function under a linear constraint. The optimal choice usually moves smoothly over the domain as the direction of the constraint changes. However, when the objective function is weakly convex, e.g. linear, the optimum will jump on the allowed interval.
\newline
\indent Here, we show that if the yield function is weakly convex, the optimal control is bang-bang. The optimal strategy has a similar form to the one for linear yield, if a further assumption on the joint rates of change of the population growth rate and the yield function is made.
\begin{restatable}{thm}{mainvex}\label{t:mainvex}
Assume that $\Phi:\R_+\to\R_+$ is weakly convex, $\Phi$ grows at most polynomially, $\Phi \in C^1(\mathbb{R_+})$ and the population survives in the absence of harvesting, that is $\mu(0)-\frac{\sigma^2}{2}>0$. Furthermore assume that the drift function $\mu(\cdot)$ satisfies the following modification of Assumption \ref{A:1}:
\begin{itemize}
\item[(i)] $\mu$ is locally Lipschitz.
\item[(ii)] $\mu$ is decreasing.
\item[(iii)] As $x\to \infty$ we have $\mu(x)\to-\infty$.
\item[(iv)] The function
\begin{equation}\label{e:G}
G(x)= \Phi(xM)\left(1-\frac{2}{\sigma^2}\mu(x)\right)- xM\Phi'(xM)
\end{equation}
has a unique extreme point in $(0,\infty)$ which is a minimum.
\end{itemize}
If the assumptions (i)-(iii) hold, the optimal control has a bang-bang form (i.e., the harvesting rate is either 0 or the maximal $M$). If assumptions (i)-(iv) hold, the optimal harvesting strategy $v$ has a bang-bang form with one threshold
\begin{equation*}
\begin{aligned}
v(x)
&= \begin{cases}
0 & \mbox{if $0< x\leq x^*$} \\
M & \mbox{if $x>x^*$}
\end{cases}
\end{aligned}
\end{equation*}
for some $x^*\in (0,\infty)$.
\end{restatable}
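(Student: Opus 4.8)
The plan is to combine the ergodic control framework of \cite{ABG} with a convexity argument for the pointwise Hamiltonian, and then to read off the threshold structure from a first-order analysis of the switching locus. This parallels the proof of Theorem~\ref{t:main}, to which it reduces when $\Phi(x)=x$: in that case $G(x)=-\tfrac{2M}{\sigma^2}x\mu(x)$, so hypothesis (iv) becomes exactly the last bullet of Assumption~\ref{A:1}. As in that proof, one first invokes \cite{ABG} to obtain a pair $(\rho^*,f)$ solving the ergodic HJB equation
\[
\rho^*=\max_{u\in[0,M]}\Big[\op_u f(x)+\Phi(xu)\Big]=\max_{u\in[0,M]}\Big[x\mu(x)f_x+\tfrac12\sigma^2x^2f_{xx}-xu\,f_x+\Phi(xu)\Big],
\]
with $f\in C^2(\R_{++})$ (by interior elliptic regularity, the right hand side being continuous in $x$ and locally Lipschitz in $f_x$) and satisfying the usual polynomial bounds on $f$ and on $f_x$; the attendant verification theorem identifies any measurable maximizing selector $v(\cdot)$ as an optimal stationary Markov control, with $\rho^*$ equal to the optimal asymptotic yield. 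For fixed $x$, the map $u\mapsto\Phi(xu)-xu\,f_x(x)$ is convex on $[0,M]$ (a convex function composed with an affine one, plus an affine term), hence attains its maximum at an endpoint; choosing $v(x)\in\{0,M\}$ accordingly proves the first assertion, that under (i)--(iii) an optimal bang-bang control exists.

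Assume now (i)--(iv), write $g:=f_x$, and set $\psi(x):=\Phi(xM)-xM\,g(x)$, so that the endpoint $u=M$ strictly dominates $u=0$ precisely on $\{\psi>0\}$; accordingly we take the harvest set to be $\{\psi>0\}$ and put $v=0$ on $\{\psi\le0\}$. The crux is to control the sign of $\psi$ near the two ends. Near $x=0$ the no-harvest relation $\rho^*=x\mu(x)g+\tfrac12\sigma^2x^2g'$ holds; since $\mu(0)-\tfrac{\sigma^2}{2}>0$, the solution selected by the value function (which has only a logarithmic singularity at the origin) satisfies $g(x)\sim\frac{\rho^*}{(\mu(0)-\sigma^2/2)\,x}$ as $x\to0^+$, so $xM\,g(x)$ stays bounded away from $0$ while $\Phi(xM)\to\Phi(0)=0$, giving $\psi<0$ near $0$; the analogous computation also shows that $v\equiv M$ near $0$ is impossible, since it either destroys survival (when $\mu(0)-M<\tfrac{\sigma^2}{2}$) or forces $\psi<0$ there anyway. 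At the other end, suppose $\psi\le0$ on some $(R,\infty)$, i.e.\ $v\equiv0$ there; then $g\ge\Phi(xM)/(xM)\ge0$ and $\tfrac12\sigma^2x^2g'=\rho^*-x\mu(x)g\ge0$ because $\mu(x)<0$, so $g$ is nondecreasing and positive, and $(\log g)'\ge\tfrac{2}{\sigma^2}\tfrac{|\mu(x)|}{x}$ together with $\mu(x)\to-\infty$ forces $g$ to grow faster than every power of $x$, contradicting the polynomial bound on $f_x$. Hence $\psi>0$ near $\infty$ (and in particular $\rho^*>0$, which also follows from the positive yield of the strategy $v=M\mathbf{1}_{\{x>R\}}$).

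Now for the single crossing. Since $f\in C^2$ we have $g\in C^1$; at any zero $x_0$ of $\psi$ one has $g(x_0)=\Phi(x_0M)/(x_0M)$ and $v(x_0)=0$, so the no-harvest relation gives $g'(x_0)=2\big(\rho^*-x_0\mu(x_0)g(x_0)\big)/(\sigma^2x_0^2)$; substituting this into $\psi'=M\Phi'(xM)-Mg-xMg'$ and simplifying yields
\[
\psi'(x_0)=-\frac{1}{x_0}\Big(G(x_0)+\tfrac{2M\rho^*}{\sigma^2}\Big),
\]
with $G$ as in \eqref{e:G}. Put $\Gamma:=G+\tfrac{2M\rho^*}{\sigma^2}$. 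From $\Phi$ continuous, $\Phi(0)=0$ and $\Phi\in C^1$ one gets $G(0^+)=0$, so $\Gamma(0^+)=\tfrac{2M\rho^*}{\sigma^2}>0$; by (iv), $\Gamma$ is unimodal with a single minimum, hence $\{\Gamma\le0\}$ is a closed subinterval $[a,b]\subset(0,\infty)$ (possibly with $b=\infty$). Therefore an \emph{upward} sign change of $\psi$ can occur only where $\psi'\ge0$, i.e.\ in $[a,b]$, and a \emph{downward} one only in $(0,a)\cup(b,\infty)$. Starting from $\psi<0$ near $0$: the first sign change is upward, hence in $[a,b]$; a second one would be downward, hence (being larger) in $(b,\infty)$; a third one would be upward again, hence in $[a,b]$, which is impossible since it lies beyond $b$. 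So $\psi$ changes sign at most twice, and since it is negative near $0$ and positive near $\infty$ it changes sign exactly once, at some $x^*\in(0,\infty)$; this gives $v=0$ on $(0,x^*]$ and $v=M$ on $(x^*,\infty)$, as claimed. Degenerate zeros with $\psi=\psi'=0$ can occur only at $a$ or $b$, where $\psi$ does not change sign and hence does not affect $v$; alternatively they are removed by perturbing $\rho^*$ and passing to the limit.

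I expect the genuine difficulty to be the end behaviour of $g=f_x$, and in particular ruling out that the optimum stops harvesting very large populations: the HJB ordinary differential equation on its own is consistent with such behaviour, so excluding it really does use the a priori polynomial gradient estimate for the value function supplied by the ergodic control regularity theory, together with $\mu(x)\to-\infty$. Identifying the correct (logarithmically singular) solution near the origin is of the same flavour but milder, the first-order identity for $\psi'$ is a direct computation, and the final counting step is elementary once the signs of $\psi$ at the two ends and the unimodality of $\Gamma$ are in hand.
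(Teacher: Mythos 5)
Your argument is correct and follows essentially the same route as the paper: convexity of the Hamiltonian in $u$ yields the bang-bang form, and the single threshold comes from the same transversality identity $\psi'(x_0)=-\tfrac{1}{x_0}\bigl(G(x_0)+\tfrac{2M\rho^*}{\sigma^2}\bigr)$ at zeros of the switching function, combined with the ODE asymptotics at $0$ and at $\infty$ (the paper's Propositions \ref{p:asymptotic2}--\ref{p:crossing2}). The only loose end is your appeal near infinity to a polynomial bound on $f_x$, which the paper never establishes; but since your own argument shows $g=f_x$ is positive and nondecreasing there, its superpolynomial growth transfers by integration to $V^*$ itself and contradicts the subpolynomial bound \eqref{e.growth} that the paper does prove, so the gap closes immediately.
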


\section{The logistic case: $\mu(x)=\bar \mu -\kappa x$}\label{s:logistic}

Throughout this section we provide a thorough analysis of the logistic Verhulst-Pearl model. As such, we will assume that the growth rate is $\mu(x)=\bar \mu -\kappa x$ for positive constants $\bar\mu, \kappa>0$. It is clear that this $\mu(\cdot)$ satisfies Assumption \ref{A:1}.
If we harvest according to a \textit{constant strategy} $\ell>0$ then the SDE \eqref{e:c-harvest} becomes
$$dX(t) = X(t)(\bar \mu - \kappa X(t) -\ell)\,dt + \sigma X(t)\,dB(t).$$
It is then easy to see that, as long as $\bar\mu-\ell-\frac{\sigma^2}{2}>0$, the asymptotic yield is
\[
L(\ell):=\lim_{T\to\infty}\dfrac1T\int_0^T \ell X(t)\,dt = \ell \frac{\bar\mu-\ell-\frac{\sigma^2}{2}}{\kappa}.
\]
We can maximize this yield $L(\ell)$, which is quadratic in $\ell$. The maximum will be at
\[
\ell^* = \frac{1}{2}\left(\bar\mu-\frac{\sigma^2}{2}\right)
\]
and the maximal asymptotic yield (among constant harvesting strategies) is
\[
L(\ell^*)= \frac{\left(\bar\mu-\frac{\sigma^2}{2}\right)^2}{4\kappa}.
\]
Note that $L(\ell^*)$ is also called \textit{maximum sustainable yield (MSY)} in the literature.
Since $x\mu(x)=\bar\mu x-\kappa x^2$ we note that
\[
\sup_{x\in \R_+} x\mu(x)=\frac{\bar\mu^2}{4\kappa}.
\]
Combining this with \eqref{e:yield_bound} one sees that the optimal asymptotic yield $\rho^*$ satisfies
\[
\frac{\left(\bar\mu-\frac{\sigma^2}{2}\right)^2}{4\kappa}\leq \rho^*\leq \frac{\bar\mu^2}{4\kappa}.
\]

 Note that Theorem \ref{t:main} does not give us information about $x^*$, the point at which one starts harvesting.

One possible strategy to find out more information about $x^*$ is the following: Look at controls of bang-bang type that have a threshold at $\eta$ and then maximize over all possible $\eta$. This will then give us a way of finding $x^*$.
Let $w(x;\eta)$ be the harvesting strategy
\begin{equation}\label{e:w}
\begin{aligned}
w(x;\eta)
&= \begin{cases}
0 & \mbox{if $0< x\leq \eta$} \\
M & \mbox{if $x>\eta$}.
\end{cases}
\end{aligned}
\end{equation}
For this control $w$ our diffusion \eqref{e:c-harvest} (with $h\equiv w$) is of the form
\begin{equation}\label{e:XX}
dX(t) = a(X(t))\,dt + b(X(t))\,dB(t)
\end{equation}
for
\[
a(x)= x(\bar\mu-w(x,\eta)-\kappa x)
\]
and
\[
b(x)=\sigma x.
\]
Standard diffusion theory shows (see \cite{HK16, BS12}) that the boundary $0$ is natural and the boundary $\infty$ is entrance for the process $X$ from \eqref{e:XX}. As a result, when $\mu-\frac{\sigma^2}{2}>0$, one can show using \cite{BS12} that the density $\rho:(0,\infty)\to (0,\infty)$ of the invariant measure $\pi$ is of the form
\begin{equation}\label{e:rho}
\begin{aligned}
\rho(y) &= \frac{C_1}{b^2(y)}\exp\left(2\int_{\eta}^y \frac{a(z)}{b^2(z)}\,dz\right)\\
&=\frac{C_1}{\sigma^2 y^2} \exp\left(2\int_{\eta}^y \frac{z(\bar\mu-w(z,\eta)-\kappa z)}{\sigma^2 z^2}\,dz\right)\\
 &= \begin{cases}
\frac{C_1}{\sigma^2 y^2}\left(\frac{y}{\eta}\right)^{\frac{2\bar\mu}{\sigma^2}}e^{-\frac{2\kappa}{\sigma^2}(y-\eta)} & \mbox{if $0< y\leq \eta$} \\
\frac{C_1}{\sigma^2 y^2}\left(\frac{y}{\eta}\right)^{\frac{2(\bar\mu-M)}{\sigma^2}}e^{-\frac{2\kappa}{\sigma^2}(y-\eta)} & \mbox{if $y>\eta$},
\end{cases}
\end{aligned}
\end{equation}
where $C_1$ is a normalizing constant given by
\[
\frac{1}{C_1} = \int_0^{\eta}\frac{1}{\sigma^2 y^2}\left(\frac{y}{\eta}\right)^{\frac{2\bar\mu}{\sigma^2}}e^{-\frac{2\kappa}{\sigma^2}(y-\eta)}\,dy + \int_{\eta}^\infty\frac{1}{\sigma^2 y^2}\left(\frac{y}{\eta}\right)^{\frac{2(\bar\mu-M)}{\sigma^2}}e^{-\frac{2\kappa}{\sigma^2}(y-\eta)}\,dy.
\]
In this case the harvesting yield is
\begin{equation}\label{e:yield}
\begin{aligned}
H(\eta)&:=\lim_{T\to\infty}\dfrac1T\int_0^T \Phi\Big(X(t)w(X(t),\eta)\Big)\,dt \\
&= \int_{\R_{++}} y w(y,\eta)\pi(dy) \\
&= \int_0^\infty y w(y,\eta)\rho(y)dy\\
&= \int_{\eta}^\infty yM\frac{C_1}{\sigma^2 y^2}\left(\frac{y}{\eta}\right)^{\frac{2(\bar\mu-M)}{\sigma^2}}e^{-\frac{2\kappa}{\sigma^2}(y-\eta)}\,dy\\
&= \frac{\displaystyle \int_{\eta}^\infty yM\frac{1}{\sigma^2 y^2}\left(\frac{y}{\eta}\right)^{\frac{2(\bar\mu-M)}{\sigma^2}}e^{-\frac{2\kappa}{\sigma^2}(y-\eta)}\,dy}{\displaystyle \int_0^{\eta}\frac{1}{\sigma^2 y^2}\left(\frac{y}{\eta}\right)^{\frac{2\bar\mu}{\sigma^2}}e^{-\frac{2\kappa}{\sigma^2}(y-\eta)}\,dy + \displaystyle \int_{\eta}^\infty\frac{1}{\sigma^2 y^2}\left(\frac{y}{\eta}\right)^{\frac{2(\bar\mu-M)}{\sigma^2}}e^{-\frac{2\kappa}{\sigma^2}(y-\eta)}\,dy}
\end{aligned}
\end{equation}

\begin{figure}[h]
\begin{center}
\includegraphics[scale=0.8]{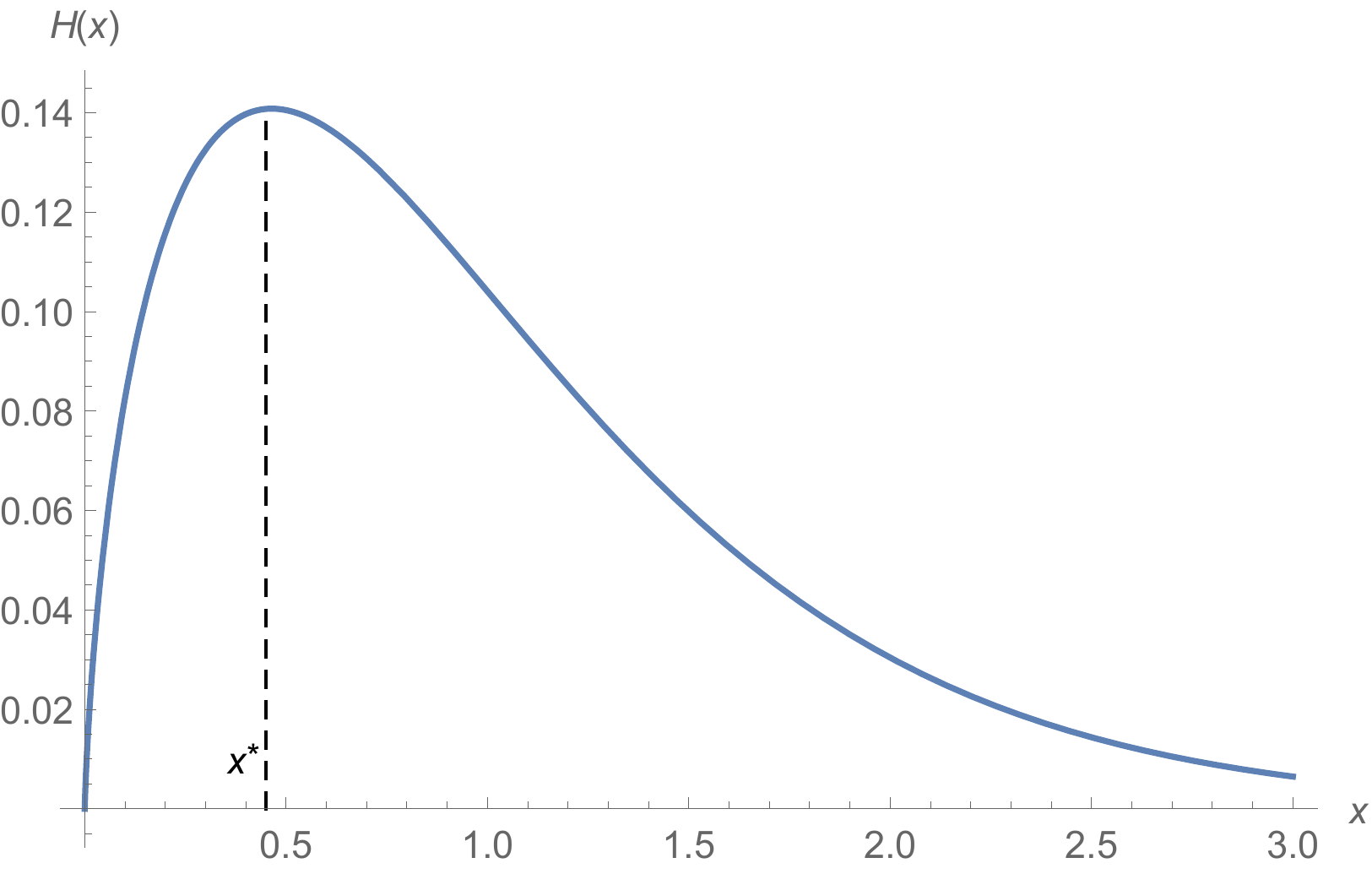}
\caption{Typical shape of the asymptotic yield function $H(x)$ as a function of the harvesting threshold $x$, where one begins to harvest. Here for $\sigma^2=1$ and $M=\bar\mu=\kappa=1$.}
\label{fig:conjexample}

\end{center}
\end{figure}

By Theorem \ref{t:main} the point $x^*$ has to satisfy:
\[
H(x^*)= \max_{\eta\in (0,\infty)}H(\eta).
\]
It is clear that $H$ is differentiable, that $x^*$ exists and satisfies $x^*\in (0,\infty)$.
Therefore, $x^*$ is a solution of
\begin{equation}\label{e:H'}
H'(\eta)=0.
\end{equation}
The condition above can be restated as an equation involving incomplete gamma functions. We were not able to prove analytically that \eqref{e:H'} has a unique solution. \cite{berg2006chen, berg2008convexity} show possible analytical methods that can be applied to such equations in a simple case. However, numerical experiments that we have done support this conjecture (see Figure \ref{fig:conjexample}).

\begin{conj}
There exists a unique $x^*\in (0,\infty)$ such that $H'(x^*)=0$. Furthermore, the optimal harvesting strategy is given by
\begin{equation*}
\begin{aligned}
v(x)
&= \begin{cases}
0 & \mbox{if $0< x\leq x^*$} \\
M & \mbox{if $x>x^*$}.
\end{cases}
\end{aligned}
\end{equation*}
\end{conj}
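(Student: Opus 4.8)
The second assertion of the conjecture follows at once from the first together with Theorem \ref{t:main}: that theorem already guarantees that the optimal control is of bang-bang type with a single threshold $x^{*}\in(0,\infty)$ maximising $H$, so $x^{*}$ is in particular a critical point of $H$ in the open interval $(0,\infty)$; if $H$ has no other critical point, this $x^{*}$ is the one in the statement. Thus the whole problem reduces to proving that $H$ has \emph{exactly one} critical point on $(0,\infty)$ — a statement about one real variable.

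The first step is to make $H$ explicit. Put $c:=\tfrac{2\kappa}{\sigma^{2}}$, $\alpha:=\tfrac{2\bar\mu}{\sigma^{2}}$, $\beta:=\tfrac{2(\bar\mu-M)}{\sigma^{2}}=\alpha-\tfrac{2M}{\sigma^{2}}$; the survival hypothesis is exactly $\alpha>1$. Substituting $t=cy$ in the integrals of \eqref{e:yield} and setting $z=c\eta$ one obtains
\[
H(\eta)=\frac{M}{c}\cdot\frac{\Gamma(\beta,z)}{z^{\beta-\alpha}\gamma(\alpha-1,z)+\Gamma(\beta-1,z)}=:\frac{M}{c}\cdot\frac{F(z)}{G(z)},
\]
where $\Gamma(\cdot,\cdot)$ and $\gamma(\cdot,\cdot)$ are the upper and lower incomplete gamma functions. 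Using $\tfrac{d}{dz}\Gamma(s,z)=-z^{s-1}e^{-z}$ and $\tfrac{d}{dz}\gamma(s,z)=z^{s-1}e^{-z}$ one finds, after a cancellation, $G'(z)=-\tfrac{2M}{\sigma^{2}}z^{\beta-\alpha-1}\gamma(\alpha-1,z)<0$ and $F'(z)=-z^{\beta-1}e^{-z}<0$. From the asymptotics of $\Gamma$ and $\gamma$ one reads off $H(\eta)\to0$ as $\eta\to\infty$, and $H(\eta)\to L(M)=\tfrac{M(\bar\mu-M-\sigma^{2}/2)}{\kappa}$ as $\eta\to0^{+}$ when $M<\bar\mu-\tfrac{\sigma^{2}}{2}$, and $H(\eta)\to0$ otherwise. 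Clearing the strictly positive factor $z^{\beta-\alpha-1}e^{-z}/(F(z)G(z))$ from $\tfrac{d}{d\eta}\log H$, the sign of $H'(\eta)$ equals the sign of
\[
Q(z):=\frac{2M}{\sigma^{2}}\,e^{z}\gamma(\alpha-1,z)\,\Gamma(\beta,z)-z^{\beta}\gamma(\alpha-1,z)-z^{\alpha}\Gamma(\beta-1,z),\qquad z=c\eta.
\]
Thus the conjecture is equivalent to: $Q$ has exactly one zero in $(0,\infty)$. Here $Q(z)\to0^{+}$ as $z\to0^{+}$ and $Q(z)\sim\Gamma(\alpha-1)\,z^{\beta-1}\!\left(\tfrac{2M}{\sigma^{2}}-z\right)\to-\infty$ as $z\to\infty$, so $Q$ has at least one zero (consistent with Theorem \ref{t:main}).

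The plan for uniqueness is to show that every zero of $Q$ is a strict down-crossing, i.e. $Q'(z_{0})<0$ whenever $Q(z_{0})=0$; by the intermediate value theorem this rules out a second zero. Differentiating $Q$ (using $\Gamma',\gamma'$ again and the relation $\tfrac{2M}{\sigma^{2}}+\beta=\alpha$, which creates several cancellations) and then substituting $Q(z_{0})=0$, one is led to
\[
Q'(z_{0})=z_{0}^{\alpha-2}\Big[(z_{0}-\alpha)\,z_{0}\,G(z_{0})+\tfrac{2M}{\sigma^{2}}\,\Gamma(\beta,z_{0})\Big],
\]
so it remains to prove the inequality $(\alpha-z_{0})\,z_{0}\,G(z_{0})>\tfrac{2M}{\sigma^{2}}\,\Gamma(\beta,z_{0})$ at every critical point $z_{0}$. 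In particular this would force $z_{0}<\alpha$, i.e. the optimal threshold lies strictly below the deterministic carrying capacity $\bar\mu/\kappa$ — a suggestive fact — but the inequality is strictly stronger. \textbf{This inequality is the main obstacle}: I do not expect an elementary proof, and indeed it is precisely the point at which the authors stop. The natural approach is to combine the critical-point relation $Q(z_{0})=0$ with monotonicity and log-convexity properties of $z\mapsto\Gamma(s,z)$ and of the ratios $z\mapsto\gamma(\alpha-1,z)/\Gamma(\beta,z)$, $z\mapsto\Gamma(\beta-1,z)/\Gamma(\beta,z)$, in the spirit of the complete-monotonicity results of Berg and Pedersen (\cite{berg2006chen,berg2008convexity}), presumably after splitting into cases according to the sign of $\beta$ and the position of $\alpha$ relative to $1$ and $2$.

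A more probabilistic route to the same inequality proceeds from the identity
\[
H'(\eta)=\frac{2M}{\sigma^{2}}\cdot\frac{\pi_{\eta}\big((0,\eta)\big)}{\eta}\,H(\eta)-M\,\eta\,\rho(\eta;\eta),
\]
obtained by differentiating $H(\eta)=M\int_{\eta}^{\infty}y\,\rho(y;\eta)\,dy$, using $\int_{0}^{\infty}\partial_{\eta}\rho(y;\eta)\,dy=0$, and the fact — visible from \eqref{e:rho} — that $\partial_{\eta}\rho(\,\cdot\,;\eta)$ is a constant multiple of $\rho(\,\cdot\,;\eta)$ on $(\eta,\infty)$. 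In this form the conjecture says that $\eta\mapsto\tfrac{2M}{\sigma^{2}}\tfrac{\pi_{\eta}((0,\eta))}{\eta}-\tfrac{\eta\,\rho(\eta;\eta)}{\int_{\eta}^{\infty}y\,\rho(y;\eta)\,dy}$ changes sign only once, which is again a monotonicity statement of the same special-function flavour. Either way, what is left is a single stubborn estimate on incomplete gamma functions.
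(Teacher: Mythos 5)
You should be aware that the statement you are proving is labelled a \emph{Conjecture} in the paper, and deliberately so: the authors state explicitly that they were unable to prove analytically that \eqref{e:H'} has a unique solution, and they offer only numerical evidence (Figure \ref{fig:conjexample}) together with a pointer to \cite{berg2006chen, berg2008convexity} for possible techniques. So there is no proof in the paper to compare against, and your proposal does not supply one either: as you yourself say, the inequality $(\alpha - z_0)\,z_0\,G(z_0) > \tfrac{2M}{\sigma^2}\,\Gamma(\beta, z_0)$ at every zero $z_0$ of $Q$ --- equivalently, that every critical point of $H$ is a strict down-crossing of $Q$ --- is exactly the point where the argument stops, and nothing in your write-up establishes it. That is a genuine gap, and it is \emph{the} gap; everything before it is bookkeeping.

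That said, the bookkeeping is correct and goes further than anything in the paper. The deduction of the second assertion from the first via Theorem \ref{t:main} is right: the theorem produces an interior maximizer of $H$, hence a critical point in $(0,\infty)$, and uniqueness of the critical point would then identify the threshold. The reduction of \eqref{e:yield} to the ratio $\tfrac{M}{c}\,\Gamma(\beta,z)\big/\bigl(z^{\beta-\alpha}\gamma(\alpha-1,z)+\Gamma(\beta-1,z)\bigr)$ checks out, as do the signs of $F'$ and $G'$ and the identification of the sign of $H'$ with the sign of $Q$; reducing uniqueness to ``every zero of $Q$ is a strict down-crossing'' is the standard and correct strategy. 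But until that final inequality is proved --- presumably via monotonicity or complete-monotonicity properties of ratios of incomplete gamma functions in the spirit of \cite{berg2006chen, berg2008convexity}, with a case analysis in $\beta$ --- the conjecture remains open. Your proposal should be presented as a reduction of the conjecture to an explicit special-function inequality, not as a proof of it.
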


\section{Discussion and future research}\label{s:disc}

We have analysed a population whose dynamics evolves according to generalization of the logistic Verhulst-Pearl model in a stochastic environment, but subjected to strategic harvesting. The rate at which the population gets harvested is bounded above by a constant $M>0$, and the harvested infinitesimal amount is proportional to the current size of the population.
We show that the harvesting strategy $v$, which describes the harvesting rate and is chosen to maximize the asymptotic harvesting yield
\[
\liminf_{T\to\infty}\dfrac1T\int_0^T X(t)h(t)\,dt,
\]
is of bang-bang type, i.e. there exists $x^*>0$ such that
\begin{equation*}
\begin{aligned}
v(x)
&= \begin{cases}
0 & \mbox{if $0< x\leq x^*$} \\
M & \mbox{if $x>x^*$}.
\end{cases}
\end{aligned}
\end{equation*}

\begin{figure}
\begin{center}
\includegraphics[scale=0.8]{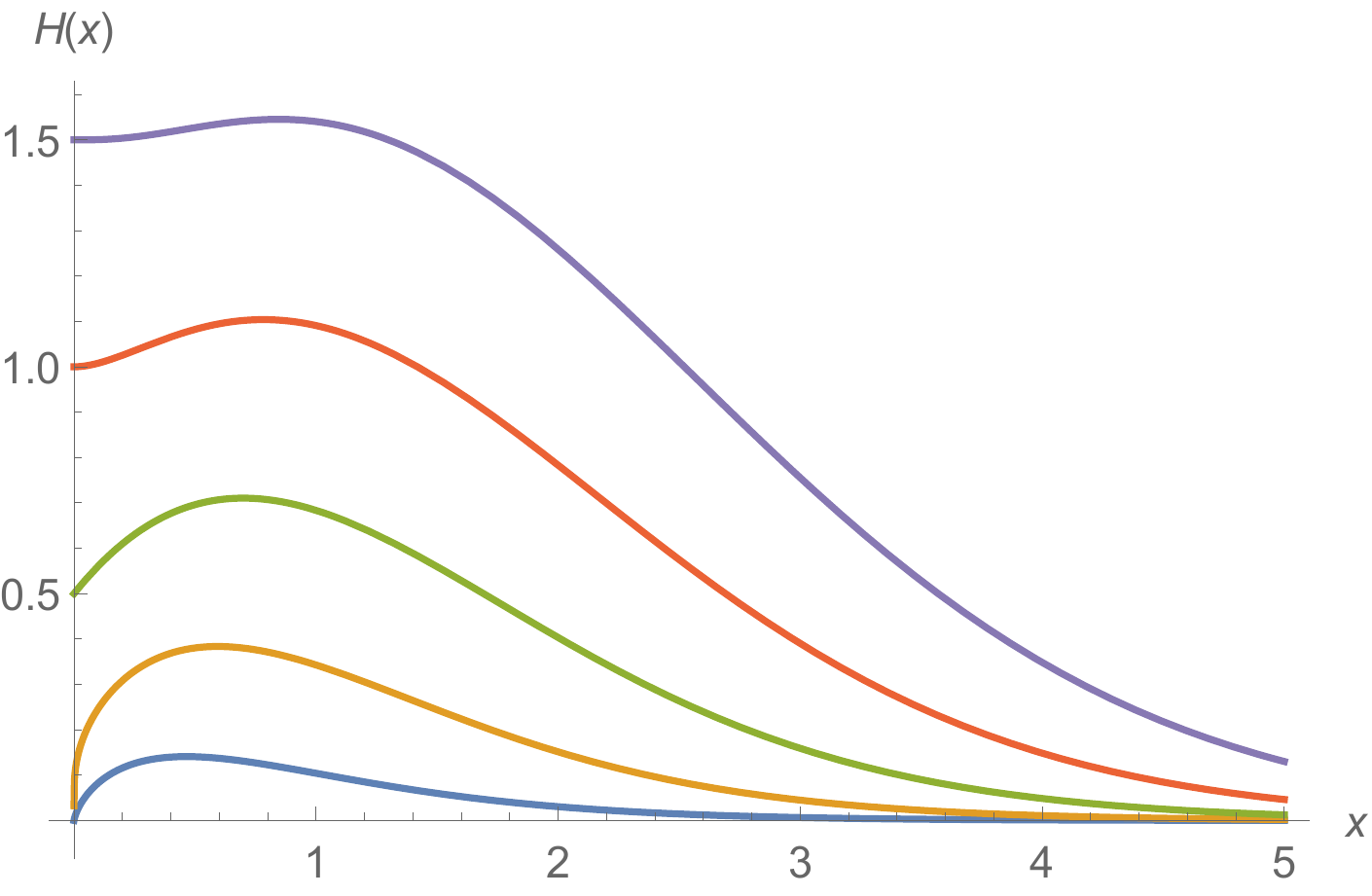}
\caption{Graph of the maximal asymptotic yield $H(x)$ as a function of the harvesting threshold $x$ for different values of the growth rate $\bar\mu$. We take $\sigma^2=1$, $M=\kappa=1$, and $\bar\mu=1$ (blue), $\bar\mu=1.5$ (orange), $\bar\mu=2$ (green), $\bar\mu=2.5$ (red), and $\bar\mu=3$ (purple).}
\label{fig:varymu}

\end{center}
\end{figure}

\subsection{Logistic Verhulst-Pearl } In the particular case when $\mu(x)=\bar\mu - \kappa x$, we can give more information about $x^*$ as follows:
The harvesting yield function $H(\eta)$ is determined, by letting the jump in the bang-bang control be at $\eta$. That is, we look at the yield when the control is
\begin{equation*}
\begin{aligned}
v(x)
&= \begin{cases}
0 & \mbox{if $0< x\leq \eta$} \\
M & \mbox{if $x>\eta$}.
\end{cases}
\end{aligned}
\end{equation*}

The typical behavior of the point $x^*$ where $H$ is maximized and of $H(x^*)$ as the parameters $\bar\mu, \kappa$ and $M$ change was analyzed numerically and is presented in Figures \ref{fig:varymu}, \ref{fig:varym} and \ref{fig:varykappa}, with the normalization $\sigma^2=1$.

We note from numerical experiments that increasing the growth rate $\bar\mu$ increases the threshold $x^*$ at which one should start optimally harvesting (Figure \ref{fig:varymu}). This is an intuitive result, since an increased growth rate increases the maximal equilibrium value of the population in the equivalent deterministic growth model with competition (\cite{smith1978analysis}). Therefore, it should also increase asymptotic harvesting yield, as well as the point at which harvesting should start. Moreover, higher growth rates make the population get faster to the point $x^*$ where one starts harvesting, reducing the cost of a delay.

If one increases the maximal harvesting rate $M$ then the harvesting threshold $x^*$ is also increased (Figure \ref{fig:varym}). This also makes sense because if $\bar\mu-\frac{\sigma^2}{2}-M<0$, then a population with constant harvesting rate $M$ will go extinct almost surely. An increase in the harvesting threshold $x^*$ is necessary to make sure that there is no extinction. Moreover, as $M$ gets larger one can wait longer to start harvesting. With a larger maximal rate available, there is less chance that there will be losses because the population overshoots the optimal extraction point. Similarly, increasing the harvesting rate $M$ also increases the maximal asymptotic harvesting yield, for the obvious reason that there is better control on the population level and therefore extraction can happen closer to the optimal level.

In contrast, if one increases the intraspecific competition rate $\kappa$, then the harvesting threshold decreases (Figure \ref{fig:varykappa}). The equilibrium value of the population in the equivalent deterministic model (\cite{smith1978analysis}) decreases with $\kappa$, and as a result so does the extraction rate. Evidently, even in the stochastic model, if competition is very strong the population cannot spend much time at high densities, and therefore one has to start harvesting early. An increase in $\kappa$ will also decrease the maximal asymptotic harvesting yield.

\begin{figure}
\begin{center}
\includegraphics[scale=0.8]{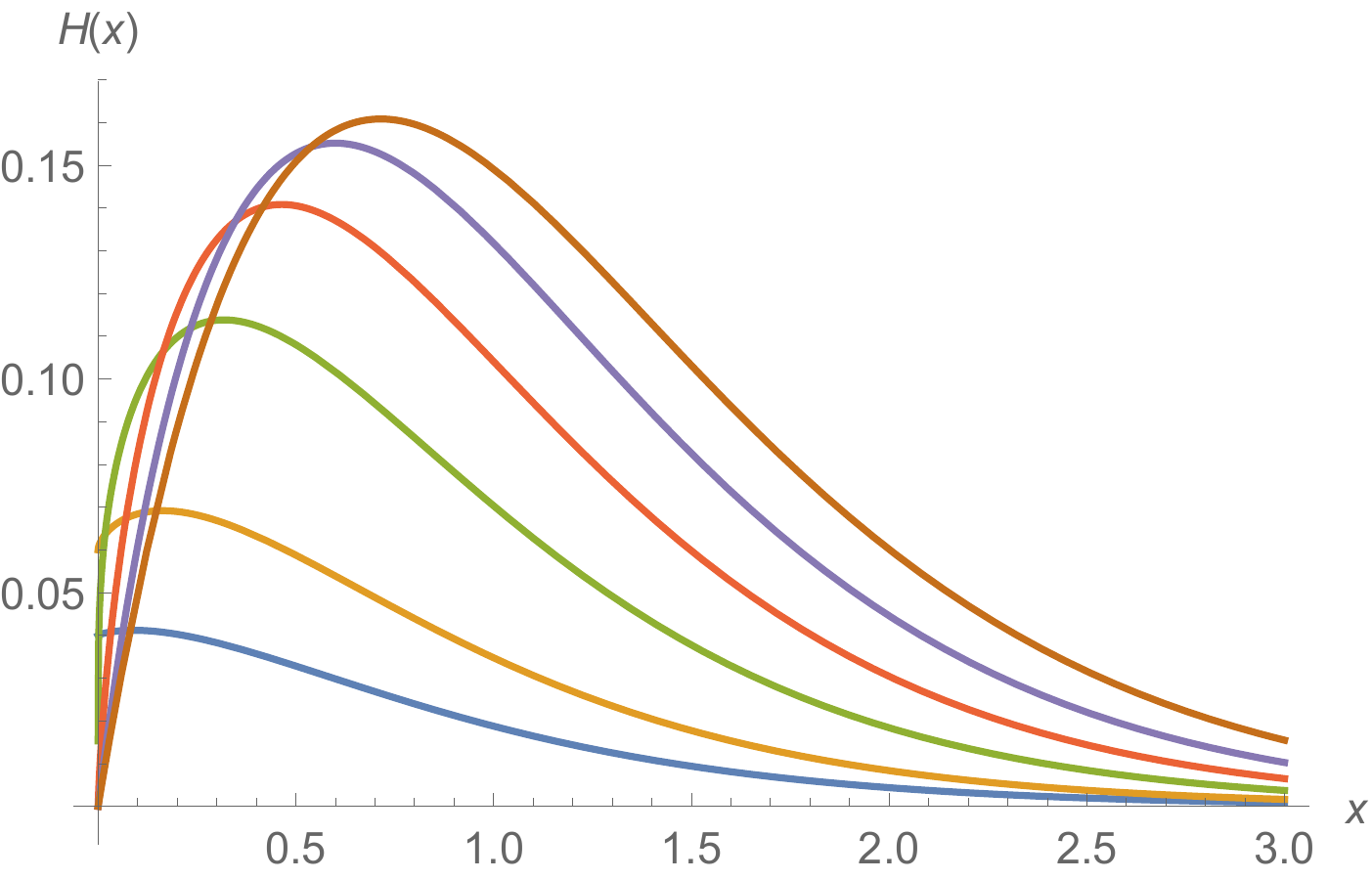}
\caption{Graph of the maximal asymptotic yield $H(x)$ as a function of the harvesting threshold $x$ for different values of the maximal harvesting rate $M$. We take $\sigma^2=1$, $\bar\mu=\kappa=1$, and $M=0.1$ (blue), $M=0.2$ (orange), $M=0.5$ (green), $M=1$ (red), $M=2$ (purple), and $M=5$ (brown).}
\label{fig:varym}

\end{center}
\end{figure}

We are able to prove that the maximal asymptotic yield $\rho^*$ satisfies the inequality
\[
\frac{\left(\bar\mu-\frac{\sigma^2}{2}\right)^2}{4\kappa}\leq \rho^*\leq \frac{\bar\mu^2}{4\kappa}.
\]
In particular, the bang-bang optimal strategy has a higher asymptotic yield than the optimal constant harvesting strategy. Moreover, the bang-bang optimal strategy gives a lower asymptotic yield than the optimal constant harvesting strategy in the absence of noise. This means that the analysis of the more complex stochastic model was fruitful, recommending a qualitatively different strategy. Moreover, environmental fluctuations decrease the maximal asymptotic yield and, because the correction is negative, protecting a population from extinction requires a careful measurement of natural fluctuations when designing optimal harvesting. When environmental stochasticity was not taken into account, harvesting often lead populations to extinction (\cite{LES95}).

Real populations do not evolve in isolation. As a result, ecology is concerned with understanding the characteristics that allow species to coexist. Harvesting can disturb the coexistence of species. In future research we intend to tackle multi-dimensional analogues of the setting treated in the current article. Natural models for which one can add harvesting would be predator-prey food chains (\cite{GH79,G84, HN17b, HN17, TL16}), more general Kolmogorov systems (\cite{SBA11, HN16}) and structured populations where there can be asymmetric harvesting (\cite{ERSS13, EHS15, HNY17, RS14, BS09, schreiber11}). In the multi-dimensional setting the Hamilton-Jacobi-Bellman (HJB) equation becomes a PDE and the analysis becomes significantly more complex. New tools will have to be developed to tackle these problems.

\begin{figure}
\begin{center}
\includegraphics[scale=0.8]{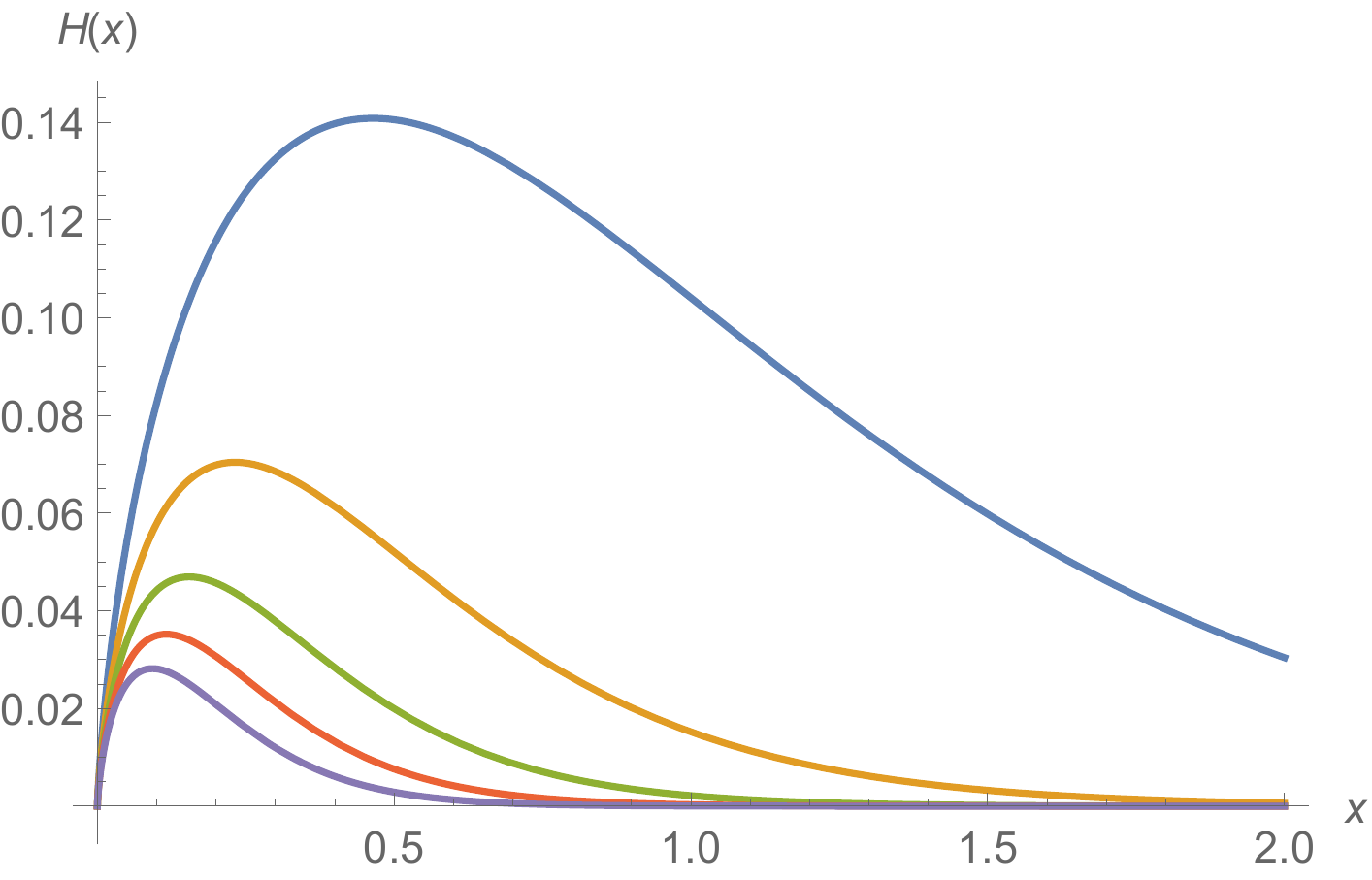}
\caption{Graph of the maximal asymptotic yield $H(x)$ as a function of the harvesting threshold $x$ for different values of the intra-competition rate. We take $\sigma^2=1$, $M=\bar\mu=1$, and $\kappa=1$ (blue), $\kappa=2$ (orange), $\kappa=3$ (green), $\kappa=4$ (red), and $\kappa=5$ (purple).}
\label{fig:varykappa}

\end{center}
\end{figure}

Above we have imposed a bound on the extraction rate, $M$. This was because it is a realistic feature, but it was also practical for the analysis. Nevertheless, it is interesting to consider the case when the extraction rate is unbounded. A practical model with no extraction limit corresponds to having unlimited control over a target population, which is sometimes the case. Such a model would have the benefit of not requiring a nuisance parameter that may be hard to determine.

\subsection{Concave and convex yield functions}
We have also studied the more general case involving concave and convex yield functions. When the yield function is strictly concave, it was shown that the optimal control is not bang-bang, but continuous in the population parameter. Vice-versa, when the yield function is weakly convex, we have shown that the optimal control is necessarily bang-bang. Moreover, if a certain further assumption on the relative rate of growth of $\mu$ and $\Phi$ holds, we can also show that the bang-bang optimal control has a single threshold $x^*$ where the extraction rate goes from 0 to $M$ -- as in the linear special case.
\newline
\indent This generalization allows us to think of applications of population harvesting where the yield function is in fact a utility function, or some other more general social welfare measure.

\subsection{Unbounded harvesting} If we allow for general, possibly unbounded, harvesting we would have to study the Skorokhod SDE
\begin{equation}\label{e:c-harvest_unbounded}
d\tilde X(t) = \tilde X(t)(\mu - \kappa \tilde X(t))\,dt + \sigma \tilde X(t)\,dB(t) - dZ_t, ~\tilde X(0)=x>0.
\end{equation}
where $(Z_t)_{t\geq 0}$ is supposed to be non-negative, increasing, right-continuous and adapted to $(\F_t)_{t\geq 0}$ - we denote the set of all such strategies by $A$. Then the problem is to maximize the asymptotic yield, i.e. find
\[
V(x) = \sup_{(Z_t)_{t\geq 0}\in A}\liminf_{T\to\infty}\E_x \frac{1}{T}\int_0^T dZ_t = \sup_{(Z_t)_{t\geq 0}\in A}\liminf_{T\to\infty} \frac{\E_x Z_T}{T}
\]
We want to find the harvesting strategy $(Z_t^*)_{t\geq 0}\in A$, which we call the optimal harvesting strategy, such that
\[
V(x) =  \liminf_{T\to\infty}\frac{\E_x Z^*_T}{T}.
\]

The analysis above, for the bounded harvesting rate, determined that the optimal strategy has a bang-bang property, where extraction is maximal after some cut-off. This suggests that raising the maximum would not change the bang-bang property, but determining that result required a bounded extraction rate. Thinking of the limiting behavior of the yield function above shows the difficulty: as $M \rightarrow \infty$, the density of the distribution above the cut-off $x^*$ goes to 0 (see \eqref{e:rho}). The conjectured optimal solution is akin to having a reflective boundary at $x^*$, and the yield is determined by the time spent close to the boundary.

\begin{conj}\label{conj:1}
Assume that the population survives in the absence of harvesting i.e. $\mu-\frac{\sigma^2}{2}>0$. The optimal extraction strategy $(Z^*_t)_{t\geq 0}$ has the  form
\begin{equation}\label{e:Z^*}
\begin{aligned}
Z_t^*(x)
&= \begin{cases}
(x-x^*)^+ & \mbox{if $t=0$} \\
L(t,x^*)& \mbox{if $t>0$}.
\end{cases}
\end{aligned}
\end{equation}
for some $x^*\in (0,\infty)$, where $L(t,x^*)$ is the local time at $x^*$ of the process $\tilde X$ from \eqref{e:c-harvest_unbounded}.
\end{conj}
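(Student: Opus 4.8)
The plan is to read \eqref{e:c-harvest_unbounded} together with the functional $V(x)$ as an \emph{ergodic singular stochastic control problem} and to solve it through the associated Hamilton--Jacobi--Bellman quasi-variational inequality. Writing $\op_0$ for the generator \eqref{e:gen_u} with $u=0$, the right object is a pair $(\rho,V)$ with $\rho\in\R$ a constant and $V\in C^2(\R_{++})$ solving
\[
\max\bigl\{\op_0 V(x)-\rho,\ 1-V_x(x)\bigr\}=0,\qquad x\in\R_{++},
\]
together with a free boundary $x^*$: on the \emph{continuation region} $(0,x^*)$ one has $\op_0 V=\rho$ and $V_x>1$, on the \emph{harvesting region} $[x^*,\infty)$ one has $V_x\equiv 1$, i.e.\ $V(x)=V(x^*)+(x-x^*)$, and the singular-control smooth fit $V_x(x^*)=1$, $V_{xx}(x^*)=0$ holds at $x^*$. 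Matching $\op_0 V$ across $x^*$ then forces $\rho=x^*\mu(x^*)$, consistent with the universal bound $\rho\le\sup_x x\mu(x)$ (which holds here for the same reason as \eqref{e:yield_bound}, by applying $\op_0$ to $f(x)=x$).

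\emph{Step 1: construct the candidate.} On $(0,x^*)$ the equation $\op_0 V=\rho$ is a linear second-order ODE, whose solutions are written explicitly via the scale function and speed density of \eqref{e:logistic}. It is convenient to recast Step 1 as a threshold optimization: for a cut-off $\eta\in(0,\infty)$, let $\pi_\eta$ be the invariant probability measure on $(0,\eta]$ of the diffusion \eqref{e:logistic} reflected downward at $\eta$ --- it exists and is unique precisely because $\mu(0)-\tfrac{\sigma^2}{2}>0$ makes the speed density integrable at the natural boundary $0$. Applying $\op_0$ to $f(x)=x$ along the reflected process and using stationarity gives, for the threshold-$\eta$ strategy, the asymptotic yield
\[
\rho(\eta)=\int_0^{\eta} y\mu(y)\,\pi_\eta(dy)\ \ge 0,
\]
which tends to $0$ as $\eta\downarrow0$ (since $y\mu(y)\to0$) and as $\eta\uparrow\infty$ (the reflected process converges to the unreflected one, whose invariant measure $\nu$ satisfies $\int y\mu(y)\,\nu(dy)=0$); hence $\rho(\cdot)$ attains a maximum at some $x^*\in(0,\infty)$. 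Uniqueness of $x^*$ and the identity $\rho(x^*)=x^*\mu(x^*)$ (which is the first-order condition $\rho'(x^*)=0$ rewritten via the explicit form of $\pi_\eta$) are then extracted from the single-hump shape of $p(x)=x\mu(x)$ in Assumption \ref{A:1}; one also checks $\op_0 V(x)=x\mu(x)\le\rho$ for $x\ge x^*$, valid because $p$ is eventually decreasing with $p(x)\to-\infty$. The two smooth-fit conditions at $x^*$ together with the requirement that $V$ stay bounded below with at most linear growth as $x\downarrow0$ then identify the constants in the ODE solution, producing the global $V$.

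\emph{Step 2: verification.} For an arbitrary admissible $(Z_t)_{t\ge0}\in A$ one applies the It\^o--Tanaka formula to $V(\tilde X_t)$, splitting $dZ$ into continuous and jump parts and using $V_x\ge1$ and $\op_0 V\le\rho$; after a localization controlled by the Lyapunov bound involving $x+\tfrac1x$ (see the Remark following \eqref{e:c-harvest}) this yields
\[
\E_x V(\tilde X_T)\le V(x)+\rho T-\E_x Z_T,
\]
hence $\liminf_{T\to\infty}\E_x Z_T/T\le\rho$ once $\liminf_{T\to\infty}\E_x V(\tilde X_T)/T\ge0$, which holds since $V$ is bounded below and at most linear and $\tilde X$ has first moments bounded uniformly in $T$. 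For the reverse inequality take the conjectured strategy \eqref{e:Z^*}: $Z^*$ first pushes the state to $x^*$ (the atom $(x-x^*)^+$ at $t=0$) and thereafter equals the local time $L(t,x^*)$ reflecting $\tilde X$ downward at $x^*$. Along $Z^*$ every inequality above is an equality --- $\op_0 V=\rho$ on $(0,x^*)$, and $V_x=1$ exactly where $Z^*$ acts --- so the ergodic theorem for the positive-recurrent reflected diffusion gives $L(T,x^*)/T\to\rho$ a.s.\ and in $L^1$; since the initial atom is $O(1)$ we get $\lim_{T\to\infty}\E_x Z_T^*/T=\rho$, which identifies $(Z^*_t)$ as optimal and shows the optimal asymptotic yield equals $\rho$, independent of $x$.

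\emph{Main obstacle.} The hardest part is the verification at the two boundaries. At $0$ --- only natural, so the controlled state can make arbitrarily long near-$0$ excursions --- one must simultaneously pin down the particular solution of $\op_0 V=\rho$ that is bounded below with at most linear growth and prove the transversality $\E_x V(\tilde X_T)/T\to0$ and the true-martingale property of the It\^o correction \emph{uniformly over admissible controls}; this needs quantitative positive-recurrence estimates (again powered by $\mu(0)-\tfrac{\sigma^2}{2}>0$), not just the qualitative ones used in the bounded case. At $x^*$ one must justify the It\^o--Tanaka expansion for a fully general increasing adapted $Z$ (not absolutely continuous, possibly with atoms) and establish the strong law $L(T,x^*)/T\to\rho$, i.e.\ an ergodic theorem for a boundary additive functional. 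A secondary but genuine difficulty is the free-boundary analysis of Step 1: proving the smooth-fit system has a unique solution with $x^*\in(0,\infty)$, equivalently that $\eta\mapsto\rho(\eta)$ is unimodal, which is exactly where the structural part of Assumption \ref{A:1} on $p(x)=x\mu(x)$ must be used in full.
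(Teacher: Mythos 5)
First, note that the statement you are addressing is stated in the paper as a \emph{conjecture}: the authors offer no proof, only the remark that the discounted analogue is solved in \cite{AS98} and that one plausible route would be a vanishing-discount limit $r\to 0$ of that result. So there is no paper proof to compare against; your quasi-variational-inequality/free-boundary programme is a genuinely different (and more direct) attack than the one the authors suggest. Several of your intermediate claims do check out: the stationary density of the diffusion reflected downward at $\eta$ is the normalized speed density on $(0,\eta]$, integrable at $0$ exactly when $\mu(0)-\sigma^2/2>0$; the ergodic yield of the threshold strategy is $\rho(\eta)=\int_0^\eta y\mu(y)\,\pi_\eta(dy)$ by applying the generator to $f(x)=x$; and differentiating the ratio form of $\rho(\eta)$ gives the first-order condition $\rho(x^*)=x^*\mu(x^*)$, which is consistent with the $C^2$ matching of your QVI across $x^*$.

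However, what you have written is a programme, not a proof, and you acknowledge this yourself. The obstacles you list are precisely where the conjecture remains open. (i) Unimodality of $\eta\mapsto\rho(\eta)$ (equivalently, uniqueness of the solution of $\eta\mu(\eta)=\rho(\eta)$) does not follow formally from $p(x)=x\mu(x)$ having a unique maximum, since $\rho(\eta)$ averages $p$ against a family of measures that itself moves with $\eta$; note that the paper's analogous uniqueness question for the bounded-rate threshold, equation \eqref{e:H'}, is itself left unproved and supported only numerically, which is a warning sign about how hard this step is. (ii) The verification inequality $\E_x V(\tilde X_T)\le V(x)+\rho T-\E_x Z_T$ requires an It\^o--Tanaka argument valid for arbitrary increasing, right-continuous, adapted $Z$ with jumps, plus a transversality estimate at the natural boundary $0$ that is uniform over admissible controls; the Lyapunov estimates of Appendix A are derived for bounded harvesting rates and do not transfer directly, since a general $Z$ can remove mass instantaneously. (iii) The strong law $L(T,x^*)/T\to\rho$ for the boundary local time needs its own ergodic argument. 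Until these are supplied the statement stays a conjecture; as a roadmap, though, your write-up is more concrete than the paper's suggested vanishing-discount route and identifies the correct free boundary and smooth-fit conditions.
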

This conjecture is supported by the results from \cite{AS98} where the authors study the maximization of the discounted yield
\[
V(x):=\sup_{(Z_t)_{t\geq 0}\in A}\E_x \int_0^{\tau}e^{-rt} dZ_t
\]
and $\tau:=\inf\{t\geq 0: \tilde X_t=0\}$ is the extinction time. It is shown in \cite{AS98} that the optimal harvesting strategy is of the form \eqref{e:Z^*}. One possible approach to prove Conjecture \ref{conj:1} would be to use the results from \cite{AS98} and then let the discount factor $r$ go to $0$.

{\bf Acknowledgements.}  We thank two anonymous referees for very insightful comments and suggestions that led to major improvements.

\bibliographystyle{amsalpha}
\bibliography{harvest}

\appendix

\section{Proofs}\label{a:proofs}

In this appendix we present the framework of ergodic optimal control and prove the main results of our paper.

For any $v\in\UM$,
denote the unique invariant probability measure of $X(t)$
on $\R_{++}$ by $\pi_v$ if it exists.
Define
$$
\rho_v=\begin{cases}
\int_0^\infty \Phi(xv(x))\pi_v(dx)&\text{ if } \pi_v \text{ exists,}\\
0& \text{ otherwise.}
\end{cases}
$$
Let $p>0$.
Since $\lim_{x\to\infty}\mu(x)=-\infty$,
 there exist constants $ k_{1p},  k_{2p}>0$ such that
\begin{equation}\label{e.3.7.2}
\op_u x^p\leq px^p\mu(x)+\frac{1}{2}p(p-1)\sigma x^p
\leq k_{1p}-k_{2p}x^{p},\, x\in\R_{++}, u\in[0,M]
\end{equation}
By Dynkin's formula
$$\E_x^v [X(t)]^p\leq x^p+k_{1p}t-k_{2p}\E_x^v\int_0^t[X(s)]^{p}ds.$$
Thus,
\begin{equation}\label{e.xp}
\dfrac1t\E_x^v\int_0^t[X(s)]^{p}ds
\leq \frac{1}{k_{2p}}\left(
 \dfrac{x^p}t+k_{1p}\right).
\end{equation}
As a result, the family of occupation measures
$$\Pi_{x,t}^v(\cdot):=\dfrac1t\int_0^t\PP_x^v\{X(s)\in\cdot\}\,ds,~~ t\geq 1$$
is tight.
If $X(t)$ has an invariant probability measure on $\R_{++}$,
then $\left(\Pi_{x,t}^v\right)_{t\geq 0}$ converges weakly to $\pi_v$ because the diffusion is nondegenerate.
This convergence and the uniform integrability \eqref{e.xp}
imply that
$$\lim_{t\to\infty}\dfrac1t\int_0^t\Phi(X(s)v(X(s)))ds=\rho_v.$$
If $X(t)$ has no  invariant probability measures on $\R_{++}$,
then the Dirac measure with mass at $0$ is the only invariant probability measure of $X(t)$
on $\R_+$.
Moreover, any weak-limit of $\left(\Pi_{x,t}^v\right)_{t\geq 0}$ as $t\to\infty$ is an invariant probability measure of $X(t)$
(\cite[Theorem 9.9]{EK09} or \cite[Proposition 8.4]{EHS15}).
Thus, $\left(\Pi_{x,t}^v\right)_{t\geq 0}$ converges weakly to the Dirac measure $\delta_0$ as $t\to\infty$.
Because of \eqref{e.xp} and $\Phi(0)=0$,
we have
$$\lim_{t\to\infty}\dfrac1t\int_0^t\Phi(X(s)v(X(s))ds=\int_0^\infty \Phi(xv(x))\pi_v(dx).$$
Thus,
we always have
\begin{equation}\label{e.lim-rho-v}
\lim_{t\to\infty}\dfrac1t\int_0^t\Phi(X(s)v(X(s))ds=\rho_v.
\end{equation}
Define
\begin{equation}\label{e:rho*}
\rho^*:=\sup_{v\in\UM}\{\rho_v\}.
\end{equation}
It will be shown later that $\rho^*>0$ whenever the population without harvesting persists, i.e. when $\mu(0)-\sigma^2/2>0$.
\begin{thm}
Suppose $\mu(0)-\sigma^2/2>0$, $\mu(\cdot)$ satisfies Assumption \ref{A:1} and $\Phi(\cdot)$ satisfies Assumption \ref{A:2}.
There exists a stationary Markov strategy $v^*\in\UM$ such that
$\pi_{v^*}$ exists and
$\rho_{v^*}=\rho^*$. Moreover,
for any admissible control $h(t)$, we have
$$
\liminf_{T\to\infty}\dfrac1T\int_0^T \Phi\Big(X(t)h(t)\Big)\,dt
\leq \rho_{v^*}=\rho^* \text{ a.s.}
$$
\end{thm}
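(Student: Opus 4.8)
The plan is to follow the convex-analytic (ergodic occupation measure) approach of \cite{ABG}, adapted to the non-compact state space $\R_{++}$, where the blanket Lyapunov estimate \eqref{e.3.7.2} is what restores compactness; the almost-sure upper bound over all admissible controls will then come from a verification-style argument carried out along a sparse deterministic sequence of times. (An alternative route passes through the ergodic HJB equation produced by the vanishing-discount method; I note below where it would enter.) Concretely, introduce the set $\mathcal G$ of \emph{ergodic occupation measures}: pairs $(\mu,v)$ with $\mu\in\mathcal P(\R_{++})$ and $v\colon\R_{++}\to U$ measurable such that $\int_{\R_{++}}\op_v f\,d\mu=0$ for all $f\in C_c^\infty(\R_{++})$; since \eqref{e:c-harvest} is nondegenerate with a unique invariant measure whenever one exists, these are exactly the pairs with $\mu=\pi_v$. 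Because \eqref{e.3.7.2} is \emph{uniform} in $u\in[0,M]$, every such $\mu$ satisfies $\int_{\R_{++}} x^p\,\mu(dx)\le k_{1p}/k_{2p}$ for every $p>0$; hence $\{\pi_v\}$ is tight, and inside $\mathcal P(\R_+)$ its closure is compact, the only possible loss of mass in a weak limit sitting at the boundary point $0$. A standard martingale-problem continuity argument (with relaxed controls) shows $\mathcal G$ is convex and closed and that $(\mu,v)\mapsto\int\Phi(xv(x))\,\mu(dx)$ is upper semicontinuous on it, the uniform moment bound \eqref{e.3.7.2} providing the uniform integrability needed because $\Phi$ is subpolynomial (dominate $\sup_{u\in[0,M]}\Phi(xu)$ by $\eps x^p+C_\eps$).

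For the attainment of $\rho^*$: if $\Phi\equiv 0$ the claim is trivial (take any constant $v\equiv\eps$ with $\mu(0)-\eps-\sigma^2/2>0$, so $\pi_v$ exists on $\R_{++}$ and $\rho_v=0=\rho^*$); otherwise that same test control shows $\rho^*\ge\int\Phi(\eps x)\,\pi_v(dx)>0$. Take a maximizing sequence $v_n$ and extract $\pi_{v_n}\to\mu^*$ weakly in $\mathcal P(\R_+)$. The limit cannot be $\delta_0$, for then $\rho_{v_n}\to 0$ by the uniform integrability above together with $\Phi(0)=0$, contradicting $\rho^*>0$; so $\mu^*(\R_{++})>0$, the limiting relaxed control together with $\mu^*$ lies in $\mathcal G$, and a measurable-selection/convexity argument (the drift is affine in $u$ and $U$ is compact) produces a \emph{precise} $v^*\in\UM$ with $\pi_{v^*}=\mu^*$ and, by upper semicontinuity, $\rho_{v^*}\ge\rho^*$. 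Since $\rho_{v^*}\le\rho^*$ by \eqref{e:rho*}, equality holds and $\pi_{v^*}$ exists on $\R_{++}$. (HJB variant: $(\rho^*,V)$ solves $\rho^*=\sup_u[\op_u V+\Phi(xu)]$, $v^*(x)$ selects a maximizer of $u\mapsto\op_u V(x)+\Phi(xu)$, and integrating the HJB equation against $\pi_{v^*}$ — once its existence is known — gives $\rho_{v^*}=\rho^*$.)

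For the upper bound over an arbitrary admissible $h$: fix $x$ and work along the sparse times $T_n=n^2$. By \eqref{e.xp}, which holds for \emph{all} admissible controls, the occupation measures $\Pi_{x,T_n}^h$ are tight, so along a subsequence the random measures $\tfrac1{T_n}\int_0^{T_n}\delta_{(X_t,h(t))}\,dt$ converge weakly to some random $\nu$ on $\R_{++}\times U$. For $f\in C_c^\infty(\R_{++})$, from $f(X_{T_n})-f(x)=\int_0^{T_n}\op_{h(t)}f(X_t)\,dt+(\text{martingale})$, dividing by $T_n$ and using boundedness of $f$ and the martingale strong law, one gets $\int\op_u f\,d\nu=0$ a.s., so the spatial marginal of $\nu$ (possibly a subprobability, if mass escaped to $0$) is an ergodic occupation measure and $\int\Phi(xu)\,d\nu\le\rho^*$. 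Since the integrand $\Phi(X_t h(t))$ is dominated by the subpolynomial $G(X_t):=\sup_{u\in[0,M]}\Phi(X_t u)$, the choice $T_n=n^2$ together with the uniform moment bounds and Borel--Cantelli supplies the almost-sure uniform integrability needed to pass to the limit: $\tfrac1{T_n}\int_0^{T_n}\Phi(X_t h(t))\,dt\to\int\Phi\,d\nu\le\rho^*$ a.s. As $\liminf_{T\to\infty}\le\liminf_n$ along $(T_n)$, this gives $\liminf_{T\to\infty}\tfrac1T\int_0^T\Phi(X_t h(t))\,dt\le\rho^*=\rho_{v^*}$ a.s. (HJB variant: use $\tfrac1T\int_0^T\Phi(X_t h(t))\,dt\le\rho^*+\tfrac{V(x)-V(X_T)}{T}+\tfrac1T(\text{martingale})$, with $V(X_{T_n})/T_n\to 0$ and $(\text{martingale})/T_n\to 0$ a.s.\ along $T_n=n^2$ by the same moment/Borel--Cantelli reasoning.)

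I expect the genuinely delicate points to be: (a) ensuring $v^*$ is positive recurrent, i.e.\ that $\pi_{v^*}$ lives \emph{on $\R_{++}$} rather than the optimizing control driving the population to extinction — this uses $\rho^*>0$ together with $\Phi(0)=0$ to rule out the $\delta_0$ limit, the one-dimensional recurrence/transience dichotomy, and the Lyapunov bound $\op_u(x+\tfrac1x)\le N(\sigma^2+M)(x+\tfrac1x)$; and (b) upgrading the upper bound from an $\E_x$-estimate to one that holds almost surely and simultaneously for every admissible $h$, which is precisely what forces the sparse-sequence device and the careful uniform-integrability bookkeeping for the unbounded $\Phi$. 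On the HJB route, the analogous hard step is the $\alpha$-uniform oscillation (Harnack-type) estimate for the discounted value functions on the non-compact domain, so that the vanishing-discount limit $(\rho^*,V)$ exists with controlled growth.
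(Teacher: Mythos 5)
Your proposal is correct and follows essentially the same route as the paper: the paper verifies exactly the three ingredients you isolate --- the uniform moment bound \eqref{e.xp} giving tightness and uniform integrability of the yields (hence $\sup_{v\in\UM}\int_0^\infty\Phi(xv(x))\,\pi_v(dx)<\infty$), the positivity $\rho^*>0$ from persistence, and the near-monotonicity condition $\Phi(x)<\rho^*$ for $x$ small (your device for ruling out the $\delta_0$ limit) --- and then invokes \cite[Theorems 3.4.5 and 3.4.7]{ABG} for the existence of $v^*$ and the almost-sure pathwise bound over all admissible controls. Your write-up simply unpacks the convex-analytic occupation-measure machinery behind those cited theorems.
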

\begin{proof}
By \eqref{e.xp} and since $\Phi$ has a subpolynomial growth rate we can conclude that
\begin{equation}\label{sup-phi-v}
\sup_{v\in\UM}\int_0^\infty \Phi(xv(x))\pi_v(dx)<\infty.
\end{equation}
Moreover,
since $\mu(0)-\sigma^2/2>0$ we note that, since our population does not go extinct,
$\rho^*>0$.
On the other hand, since $\Phi$ is continuous and $\Phi(0)=0$ we get that $\Phi(x)<\rho^*$ for $x$ is sufficiently small. This fact combined with \eqref{sup-phi-v} implies the
existence of an optimal Markov strategy $v^*$
according to \cite[Theorem 3.4.5, Theorem 3.4.7]{ABG}.
\end{proof}

\begin{thm}\label{thm2.1d}
Suppose $\mu(0)-\sigma^2/2>0$, $\mu(\cdot)$ satisfies Assumption \ref{A:1} and $\Phi(\cdot)$ satisfies Assumption \ref{A:2}. The HJB equation

\begin{equation}\label{e:HJB}
\max_{u\in U}\Big[\op_u V(x)+\Phi(xu)\Big]=\rho
\end{equation}
admits a classical solution $V^*\in C^2(\R_+)$ satisfying $V^*(1)=0$
and $\rho=\rho^*> 0$.
The solution $V^*$ of \eqref{e:HJB} has the following properties:
\begin{itemize}
\item[a)] For any $p\in (0,1)$
\begin{equation}\label{e.growth}
\lim_{x\to\infty}\dfrac{V^*(x)}{x^{p}}=0.
\end{equation}
\item[b)] The function $V^*$ is increasing, that is
\begin{equation}\label{increase}
V^*_x\geq 0,\, x\in\R_{++}.
\end{equation}
\end{itemize}
A Markov control $v$ is optimal if and only if
it satisfies
\begin{equation}\label{e:max_princ}
\begin{aligned}
\dfrac{d V^*}{d x}(x)\Big[x(\mu(x)-v(x))\Big]+\Phi(xv(x))
=\max_{u\in U} \left(\dfrac{d V^*}{d x}(x)\Big[x(\mu(x)-u)\Big]+\Phi(xu)\right)
\end{aligned}
\end{equation}
almost everywhere in $\R_+$.
\end{thm}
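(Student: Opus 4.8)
\emph{Proof strategy.}
The plan is to run the vanishing‑discount argument of ergodic control, in the framework of \cite{ABG}, adapted to the present one–dimensional diffusion on $\R_{++}$ whose coefficients degenerate at $0$ and blow up at $\infty$. For $\alpha>0$ introduce the discounted value function
\[
V_\alpha(x):=\sup_{h\in\mathfrak{U}}\E_x\int_0^\infty e^{-\alpha t}\,\Phi\big(X(t)h(t)\big)\,dt,
\]
which, in the class of functions of at most polynomial growth, is the unique solution of the discounted HJB equation $\max_{u\in U}\big[\op_u V_\alpha(x)+\Phi(xu)\big]=\alpha V_\alpha(x)$ on $\R_{++}$; since $\mu$ is locally Lipschitz and $u\mapsto\Phi(xu)$ is continuous, interior Schauder estimates upgrade any such $C^1$ solution to $C^2_{\mathrm{loc}}(\R_{++})$. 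Set $\widehat V_\alpha:=V_\alpha-V_\alpha(1)$, so $\widehat V_\alpha(1)=0$ and $\max_{u}[\op_u\widehat V_\alpha+\Phi(xu)]=\alpha\widehat V_\alpha+\alpha V_\alpha(1)$.

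The heart of the argument is a pair of a priori bounds, uniform for $\alpha\in(0,1]$: (i) $\alpha V_\alpha(1)$ stays in a fixed compact subinterval of $(0,\infty)$; (ii) $\{\widehat V_\alpha\}_{\alpha}$ is bounded in $C^2_{\mathrm{loc}}(\R_{++})$. For (i), the bound $\liminf_{\alpha\downarrow0}\alpha V_\alpha(1)>0$ comes from testing the discounted problem against a stabilising constant rate $\ell^*\in\big(0,\mu(0)-\frac{\sigma^2}{2}\big)$, for which $X$ is positive recurrent with $\rho_{\ell^*}>0$, and the upper bound from \eqref{e.xp} and the subpolynomial growth of $\Phi$. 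For (ii), I would use the dynamic–programming identity --- for $x>1$ and a nearly optimal control $h$, with $\tau$ the first hitting time of $1$ ---
\[
V_\alpha(x)-V_\alpha(1)=\E_x^{h}\!\int_0^{\tau}e^{-\alpha t}\,\Phi\big(X(t)h(t)\big)\,dt+V_\alpha(1)\big(\E_x^{h}e^{-\alpha\tau}-1\big),
\]
bounding the first term by $\E_x^{h}\int_0^{\tau}\Phi(MX(t))\,dt$ and the associated Green‑function and moment quantities via \eqref{e.3.7.2}--\eqref{e.xp}, and the second by $|\E_x^{h}e^{-\alpha\tau}-1|\le\alpha\,\E_x^{h}\tau$ together with $\alpha V_\alpha(1)=O(1)$ and a uniform bound on $\E_x^{h}\tau$ (the Lyapunov function $x+\frac1x$ from the remark after \eqref{e:gen_u} controls large $x$, and the estimate is symmetric for $x<1$). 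Given (i)--(ii), Arzel\`a--Ascoli and a diagonal extraction furnish $\alpha_k\downarrow0$ with $\widehat V_{\alpha_k}\to V^*$ in $C^2_{\mathrm{loc}}(\R_{++})$ and $\alpha_k V_{\alpha_k}(1)\to\rho$ for some $\rho\in(0,\infty)$; since $\alpha_k\widehat V_{\alpha_k}\to0$ locally uniformly, passing to the limit in the discounted equation gives $\max_{u\in U}[\op_u V^*+\Phi(xu)]=\rho$ on $\R_{++}$ with $V^*(1)=0$.

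To identify $\rho$ with $\rho^*$ and to obtain the optimality criterion I would use verification. For any admissible $h$, It\^o's formula and optional stopping applied to $V^*(X(t))$, the HJB inequality $\op_u V^*+\Phi(xu)\le\rho$, the fact that $V^*$ is bounded below (property (b)), the polynomial growth of $V^*$ (property (a)), and a law‑of‑large‑numbers estimate for the martingale part give $\liminf_{T\to\infty}\frac1T\int_0^T\Phi\big(X(t)h(t)\big)\,dt\le\rho$ almost surely; hence $\rho^*\le\rho$. Taking instead $v^*$ to be a measurable selector of the maximiser in \eqref{e:HJB} (available by a standard measurable‑selection theorem), the same computation holds with equality in the limit, and with \eqref{e.lim-rho-v} one deduces $\rho_{v^*}=\rho$, so $\rho\le\rho^*$; together with $\rho^*>0$ from the preceding theorem, $\rho=\rho^*>0$. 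The same computation also yields the criterion: a Markov control $v$ satisfies $\rho_v=\rho^*$ if and only if the maximum in \eqref{e:HJB} is attained at $u=v(x)$ for $\pi_v$‑almost every $x$, and since the diffusion is nondegenerate on $\R_{++}$ the measure $\pi_v$ (when it exists) has a strictly positive density, so this is precisely the relation \eqref{e:max_princ} holding almost everywhere.

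Finally, the two structural properties. For \eqref{increase}: each yield function here is nondecreasing (from $\Phi(0)=0$, $\Phi\ge0$ and concavity or convexity), whence each $V_\alpha$ is nondecreasing --- compare two initial conditions via the pathwise monotonicity of \eqref{e:con-diff} in the initial datum --- and therefore $V^*_x\ge0$; alternatively this follows from the sign of the maximum principle applied to the first‑order ODE satisfied by $V^*_x$. For \eqref{e.growth}: using the Poisson‑equation representation $V^*(x)-V^*(1)=\E_x^{v^*}\int_0^{\tau_1}\big(\Phi(Xv^*)-\rho^*\big)\,dt$ together with the moment bound \eqref{e.xp} and sharp estimates of the Green's function of the mean‑reverting diffusion (the drift $x\mu(x)\to-\infty$ forces the process to descend from large $x$ very quickly), one obtains the stated growth rate. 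The principal obstacle is bound (ii): because not every admissible control keeps the population alive, the required estimates near the extinction boundary $x=0$ cannot come from a single Lyapunov function, and one must exploit the near‑monotonicity of the reward there --- the fact, already used in the preceding theorem, that $\Phi(x)<\rho^*$ for all sufficiently small $x$ --- to confine nearly optimal trajectories away from extinction uniformly in $\alpha$.
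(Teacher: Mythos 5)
Your proposal is correct and follows essentially the same route as the paper: a vanishing-discount argument producing $V^*$ as a limit of $V_{\alpha_n}-V_{\alpha_n}(1)$ with $\alpha_n V_{\alpha_n}(1)\to\rho$, monotonicity via pathwise comparison of \eqref{e:con-diff} in the initial datum under a common admissible control, growth of $V^*$ via hitting-time/occupation bounds from the Lyapunov estimate \eqref{e.3.7.2}, and identification $\rho=\rho^*=\rho_{v^*}$ together with the optimality criterion by a Dynkin/verification computation against a measurable selector. The only difference is one of packaging: the paper outsources the uniform-in-$\alpha$ estimates, the limit passage, and the if-and-only-if criterion to \cite[Lemmas 3.7.2, 3.7.8, Theorems 3.7.12, Formula 3.7.48]{ABG} (using exactly the near-monotonicity of the reward at the extinction boundary that you flag as the principal obstacle), whereas you sketch direct derivations of the same estimates.
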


\begin{proof}
Consider the optimal problem with the yield function
$$J_h(x)=\E_x\int_0^\infty e^{-\alpha t} h(t)X(t)dt$$
for some fixed $x\in\R_{++}$ and $h\in\mathfrak{U}$. Note that this is the \textit{$\alpha$-discounted} optimal problem.
Pick any $0<x_1<x_2<\infty$ and let $X^{x_1}$, $X^{x_2}$ be the solutions
to the controlled diffusion
$$dX(t) = X(t)(\mu(X(t)) -h(t) )\,dt + \sigma X(t)\,dB(t) $$
with initial values $x_1$, $x_2$ respectively. Note that we are using a fixed admissible control $h(t)$ which is the same for any initial value.
The control $h(t)$ here is not a Markov control which in general depends on the initial value.
Since $\mu(\cdot)$ is continuous and decreasing,
for $y_1, y_2>0$, there exists $\xi(y_1, y_2)>0$ depending continuously on $y_1, y_2$ such that
$\mu(y_1)-\mu(y_2)=-\xi(y_1, y_2)(\ln y_1-\ln y_2)$.
Using It\^{o}'s Lemma
we have
$$
\begin{aligned}
d (\ln X^{x_2}(t)-\ln X^{x_1}(t))=&\left(\mu((X^{x_2}(t))-\mu(X^{x_1}(t))\right)dt\\
=&-\xi(X^{x_1}(t), X^{x_2}(t)) (\ln X^{x_2}(t)-\ln X^{x_1}(t))dt,
\end{aligned}
$$
which in turn yields
 $$
 \ln X^{x_2}(t)-\ln X^{x_1}(t)=(\ln x_2-\ln x_1)\exp\left(-\int_0^t\xi(X^{x_1}(s), X^{x_2}(s))ds\right)
>0.$$
Therefore, if $x_2>x_1$, we get that $$\PP(X^{x_2}(t)> X^{x_1}(t), t\geq0)=1.$$
This implies that
$J_h(\cdot)$ is an increasing function. Therefore, the optimal yield
$$V_\alpha(x):=\sup_{h\in\mathfrak{U}}J_h(x)$$
is also increasing.
By \cite[Lemma 3.7.8]{ABG},
there is a function $V^*\in C^2(\R_{++})$ satisfying
\eqref{e:HJB} for a number $\rho$ such that
\begin{equation}\label{e:rho2}
\rho\geq\rho^*.
\end{equation}
Moreover,
$$V^*(x)=\lim_{n\to\infty} \left(V_{\alpha_n}(x)-V_{\alpha_n}(1)\right)$$
for some sequence $(\alpha_n)_{n\in\N}$ that satisfies $\alpha_n\to0$ as $n\to\infty$. This implies that $V^*$ is an increasing function, i.e.
\begin{equation}\label{increase}
V^*_x\geq 0,\, x\in\R_{++}.
\end{equation}
For any continuous function $\psi:\R_{++}\mapsto\R$
satisfying
\begin{equation}\label{e.op}
|\psi(x)|\leq c(1+x^p), x\in\R_{++}, c>0
\end{equation}
 we have from \eqref{e.3.7.2} and \cite[Lemma 3.7.2]{ABG} that $\E^v_x |\psi(X(t))|$ exists and satisfies
\begin{equation}\label{e.3.7.5}
\lim_{t\to\infty}\left(\frac1t\sup_{v\in\UM}\E^v_x |\psi(X(t))|\right)=0,
\end{equation}
and
\begin{equation}\label{e.3.7.6}
\lim_{R\to\infty}\E^v_x \psi(X(t\wedge\xi_R))=\E^v_x \psi(X(t))<\infty, t\geq0,
\end{equation}
where $\xi_R=\inf\{t\geq0: X(t)> R \text{ or } X(t)<R^{-1}\}.$
Moreover, by using \cite[Lemma 3.7.2]{ABG} again we get that
\begin{equation}\label{e.3.7.4}
\lim_{x\to\infty}\dfrac{f_R(x)}{x^p}=0, R\geq 0
\end{equation}
where
$$f_R(x):=\sup_{v\in\UM}\E^v_x\int_0^{\tau_R}\Phi(X(t))dt,
$$
and
$\tau_R:=\inf\{t\geq0: X(t)\leq R\}.$

By \cite[Formula 3.7.48]{ABG},
we have the estimate
$$
V^*(x)\leq \sup_{v\in\UM}\E^v_x\int_0^{\tau_R}\left(\Phi(X(t))+\rho^*\right)dt+\sup_{y\in[0,R]}\{V^*(y)\}
$$
which implies
\begin{equation}\label{e.V*growth}
V^*(x)\leq c_p(1+x^p), x\geq R\,\text{  for some }\,c_p>0.
\end{equation}

Now, pick any $\varepsilon>0$ and divide \eqref{e.V*growth} on both sides by $x^{p+\varepsilon}$. We get
\[
\frac{V^*(x)}{x^{p+\varepsilon}}\leq c_p\left(\frac{1}{x^{p+\varepsilon}}+x^{-\varepsilon}\right), ~ x\geq R
\]
and by letting $x\to\infty$
\[
\lim_{x\to\infty}\frac{V^*(x)}{x^{p+\varepsilon}} = 0.
\]
This implies, since $p$ and $\varepsilon>0$ are arbitrary, equation \eqref{e.growth}.
Let $\chi:\R_{++}\mapsto[0,1]$ be a continuous function
satisfying $\chi(x)=0$ if $x<\frac12$ and $\chi(x)=1$ if $x\geq 1$.
Then $\psi(x):=V^*(x)\chi(x)$ satisfies \eqref{e.op}  because of \eqref{e.V*growth}. On the other hand, since  $V^*(x)$ is increasing and $V^*(1)=0$, then  $V^*(x)\leq 0$ when $x\leq 1$.
Thus, we have
$$V^*(x)\leq \chi(x)V^*(x), x\in\R_{++}.$$
Let $v^*$ be the measurable function satisfying \eqref{e:max_princ}.
 \begin{equation}\label{e.rho-ine}
\rho\geq\rho^*\geq\rho_{v^*}.
\end{equation}
By Dynkin's formula
$$
\begin{aligned}
\E_x^{v^*}\chi(X(t\wedge\xi_R))V^*(X(t\wedge\xi_R))-V^*(x)&\geq
\E_x^{v^*}V^*(X(t\wedge\xi_R))-V^*(x)\\
&=\E_x^{v^*}\int_0^{t\wedge\xi_R}\left(\rho-\Phi(X(s)v(X(s)))\right)ds
\end{aligned}
$$
Letting $R\to\infty$,
we obtain from the monotone convergence theorem and
\eqref{e.3.7.6} that
$$
\begin{aligned}
\dfrac1t\left(\E_x^{v^*}\chi(X(t))V^*(X(t))-V^*(x)\right)\geq
\rho -\dfrac1t\E_x^{v^*}\int_0^{t}\Phi(X(s)v(X(s)))ds, t>0
\end{aligned}
$$
Letting $t\to\infty$ and using \eqref{e.3.7.5}  and \eqref{e.lim-rho-v}, we have
$$0\geq \rho-\rho_{v^*}.$$
This and \eqref{e.rho-ine} implies that $\rho=\rho^*=\rho_{v^*}$.

By the arguments from \cite[Theorem 3.7.12]{ABG}, we can show that
$v$ is an optimal control if and only if \eqref{e:max_princ} is satisfied.
\end{proof}

When $\Phi$ is the identity mapping the equation \eqref{e:max_princ} becomes
$$
\begin{aligned}
-\dfrac{d V^*}{d x} v (x) + v(x) =\max_{u\in U} \left(-\dfrac{d V^*}{d x} u +u \right),
\end{aligned}
$$
which implies
\begin{equation}\label{e:v_1d}
\begin{aligned}
v(x)
&= \begin{cases}
0 & \mbox{if $ V_x^*>1$} \\
M & \mbox{if $ V_x^*<1$}.
\end{cases}
\end{aligned}
\end{equation}

Our main result is the following theorem.
\main* 

\begin{rmk}\label{r:V_x=1}
If $V_x^*(x)=1$ then we note that \eqref{e:v_1d} does not provide any information about $v(x)$. However, in this case we can set the harvesting rate equal to anything since the yield function will not change. This is because our diffusion is non-degenerate and changing the values of the drift on a set of zero Lebesgue measure does not change the distribution of $X$.
\end{rmk}
We split up the proof of Theorem \ref{t:main} into a few propositions.
It is immediate to see that the HJB equation \eqref{e:HJB} becomes
\begin{equation}\label{e:HJB_1d}
\begin{split}
\rho &=\max_{u\in U}\Big[ x(\mu(x)-u)f_x+\dfrac12\sigma^2 x^2f_{xx} + xu \Big] \\ &=x\mu(x)f_x+\dfrac12\sigma^2x^2f_{xx} +\max_{u\in U}[ (1-f_x)xu]\\
 &= \begin{cases}
x\mu(x)f_x+\dfrac12\sigma^2 x^2f_{xx} & \mbox{if $f_x> 1$}\\
x(\mu(x)-M)f_x+\dfrac12\sigma^2 x^2f_{xx} +Mx &\mbox{if $f_x\leq 1$}.
\end{cases}
\end{split}
\end{equation}

\begin{proof}[Sketch of proof of Theorem \ref{t:main}]
Since the optimal control is given by \eqref{e:v_1d} we need to analyze the properties of the function $V_x^*$ which by \eqref{e:HJB_1d} satisfies a first order ODE. The analysis of this is split up into several propositions. Note that the ODE governing $V_x^*$ is different, depending on whether $V_x^*>1$ or $V_x^*\leq 1$.

In Proposition \ref{p:asymptotic} we analyze the ODE for when $V_x^*\leq 1$ and find its asymptotic behavior close to $0$. Using this we can show in Proposition \ref{p:zero} that one cannot have a $\eta>0$ such that $V_x^*(x)\leq 0$ for all $x\in(0,\eta]$.

Similarly, in Proposition \ref{p:infinity} we show that there can exist no $\zeta>0$ such that $V_x^*(x)\geq 1$ for all $x\geq \zeta$.

In Proposition \ref{p:crossing} we explore the possible ways $V_x^*$ can cross the line $y=1$ and find using soft arguments that there can be at most $3$ crossings. Finally, we show that actually there must be exactly one crossing of $y=1$ by $V_x^*$ and that this crossing has to be from above (Figure \ref{fig:boundarycrossing3}). This combined with \eqref{e:v_1d} completes the proof.
\end{proof}

\begin{prop}\label{p:asymptotic}
Assume $\mu(\cdot)$ is locally Lipschitz on $[0,\infty)$. Then any solution $\varphi_2$ of the ODE
\begin{equation}\label{e:vp2}
\frac{d\vp_2}{dx}(x) + \frac{2(\mu(x) -M)}{\sigma^2 x}\vp_2(x) = \frac{2(\rho-Mx)}{\sigma^2 x^2}
\end{equation}
satisfies
\begin{equation}\label{e:lim_varphi_2=pminfty}
\lim_{x\to 0^+} \varphi_2 (x)  = \pm\infty.
\end{equation}

\end{prop}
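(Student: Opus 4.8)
The plan is to solve the linear first-order ODE \eqref{e:vp2} explicitly by the integrating-factor method and then read off the behaviour near $0$. Since $\mu$ is locally Lipschitz, all coefficients in \eqref{e:vp2} are continuous on $(0,\infty)$, so every solution is defined on all of $(0,\infty)$ and, after fixing a base point $x_0\in(0,\infty)$, has the form
\[
\varphi_2(x)=\frac{N(x)}{I(x)},\qquad N(x):=C+\int_{x_0}^{x}I(s)\,\frac{2(\rho-Ms)}{\sigma^2 s^2}\,ds,\qquad I(x):=\exp\!\left(\int_{x_0}^{x}\frac{2(\mu(z)-M)}{\sigma^2 z}\,dz\right),
\]
for a constant $C\in\R$ that parametrizes the solution.

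The first step is to isolate the singularity of $I$ at $0$. Put $\alpha:=\tfrac{2(\mu(0)-M)}{\sigma^2}$. Using $|\mu(z)-\mu(0)|\le Lz$ for $z$ small (local Lipschitz), the map $z\mapsto \tfrac{2(\mu(z)-\mu(0))}{\sigma^2 z}$ is bounded, hence integrable, on $(0,x_0]$; therefore $\int_{x_0}^{x}\tfrac{2(\mu(z)-M)}{\sigma^2 z}\,dz=\alpha\ln(x/x_0)+r(x)$ with $r(x)$ converging to a finite limit $r_0$ as $x\to0^+$, and consequently $I(x)/x^{\alpha}\to \ell_0:=x_0^{-\alpha}e^{r_0}\in(0,\infty)$. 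Since $\rho=\rho^*>0$ by Theorem \ref{thm2.1d}, the integrand $g(s):=I(s)\tfrac{2(\rho-Ms)}{\sigma^2 s^2}$ is positive for small $s$ and satisfies $g(s)\sim \tfrac{2\rho\ell_0}{\sigma^2}\,s^{\alpha-2}$ as $s\to0^+$.

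The conclusion then follows from a short case analysis on $\alpha$. If $\alpha\le 1$ then $\alpha-2\le-1$, so $g$ is positive and non-integrable near $0$ and $N(x)=C-\int_{x}^{x_0}g(s)\,ds\to-\infty$; if moreover $0\le\alpha\le1$ then $I(x)\to\ell_0>0$ (case $\alpha=0$) or $I(x)\to0^+$ (case $0<\alpha\le1$), whence $\varphi_2=N/I\to-\infty$, while if $\alpha<0$ then $I(x)\to+\infty$ and $I'\neq0$ near $0$, so L'Hopital gives $\lim\varphi_2=\lim\tfrac{N'}{I'}=\lim\tfrac{\rho-Mx}{x(\mu(x)-M)}=-\infty$. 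If instead $\alpha>1$, then $g$ is integrable near $0$, so $N(x)$ tends to a finite limit $N_0$ while $I(x)\to0^+$: when $N_0\neq0$ this already yields $\varphi_2\to\pm\infty$, and when $N_0=0$ we have $N,I\to0$ with $I'\neq0$ near $0$, so L'Hopital yields $\lim\varphi_2=\lim\tfrac{N'}{I'}=\lim\tfrac{\rho-Mx}{x(\mu(x)-M)}=+\infty$. In all cases $|\varphi_2(x)|\to\infty$, which is \eqref{e:lim_varphi_2=pminfty}.

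I expect the only genuinely delicate point to be the sub-case $\alpha>1$ with $N_0=0$: there the homogeneous solution $1/I$ blows up while the particular part has a finite limit, so a single value of $C$ could a priori make $\varphi_2$ bounded, and this is excluded precisely because $\rho\neq0$ forces $\tfrac{N'}{I'}=\tfrac{\rho-Mx}{x(\mu(x)-M)}$ to diverge — the statement would be false for $\rho=0$. The rest is routine bookkeeping: continuity and integrability of the remainder $r$, the signs of $I$ and of $\mu(0)-M$, and verifying the hypotheses of L'Hopital (which in each use only requires $I'\neq0$ near $0$ together with $I\to0$ or $I\to+\infty$).
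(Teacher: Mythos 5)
Your proposal is correct and takes essentially the same route as the paper's proof: an integrating-factor representation, the local Lipschitz bound giving $I(x)\sim \ell_0 x^{\alpha}$ with $\alpha=\tfrac{2(\mu(0)-M)}{\sigma^2}$, a case analysis on $\alpha$ (the paper phrases it as cases on $\mu(0)$ versus $M$ and $M+\tfrac{\sigma^2}{2}$), and l'H\^opital reducing to $\lim_{x\to0^+}\tfrac{\rho-Mx}{x(\mu(x)-M)}=\pm\infty$ since $\rho>0$. Your handling of the delicate sub-case $\alpha>1$, $N_0=0$ matches the paper's Case 3 with $J=0$, and your observation that the argument hinges on $\rho\neq 0$ is exactly the point the paper relies on.
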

\begin{proof}

It follows from the method of integrating factors that the solution to the ODE \eqref{e:vp2} is
\begin{equation}\label{e:Soln_Formula_from_Integrating_Factor}
	\varphi_2 (x) = \frac{\zeta(x_0)\varphi_2 (x_0) + \int_{x_0}^{x} \zeta(y)\beta(y) \;dy}{\zeta(x)},
\end{equation}
where the non-homogeneous term is $\beta(y):=\frac{2(\rho-My)}{\sigma^2 y^2}$, and the integrating factor is
\[
	\zeta(x) := e^{\int_{x_1}^{x} \gamma(y) \;dy},
\]
for $\gamma(y):=\frac{2(\mu(y) -M)}{\sigma^2 y}$, and arbitrary $x_0, x_1\in(0,\infty)$. Since $\mu$ is locally Lipschitz at $x=0$, there are constants $L, K>0$ such that for any $x\in [0,L]$, $|\mu(x)-\mu_0|\leq K x$, where $\mu_0:=\mu(0)$. From now on, we choose $x_1:=L$ (or any number between $0$ and $L$). We have, for any $x\in [0,x_1]$,
\[
	\left|\int_{x_1}^{x} \frac{\mu(y)-\mu_0}{y} \;dy\right| \leq \int_{x}^{x_1} \frac{|\mu(y)-\mu_0|}{y}  \;dy \leq \int_{x}^{x_1} \frac{Ky}{y}  \;dy\leq K (x_1 - x).
\]
This implies that as $x\to 0^+$,
\begin{equation}\label{e:Asym_for_zeta}
	\zeta(x) = e^{\frac{2}{\sigma^2}\int_{x_1}^{x} \frac{\mu(y)-\mu_0}{y} \;dy} e^{\frac{2}{\sigma^2}\int_{x_1}^{x} \frac{\mu_0 -M}{y} \;dy} \sim x^{\frac{2}{\sigma^2}(\mu_0-M)}.
\end{equation}
On the other hand, from now on, if we choose $x_0>0$ sufficiently close to $0$ such that $\rho-Mx>0$ and \eqref{e:Asym_for_zeta} holds for all $x\in (0,x_0)$, then we have, for any $0<x<x_0$,
\begin{equation}\label{e:Asym_for_int_zeta_beta}
\begin{aligned}
	\int_{x}^{x_0} \zeta(y)\beta(y) \;dy &\sim \frac{2}{\sigma^2} \int_{x}^{x_0} y^{\frac{2}{\sigma^2}(\mu_0-M)-2} (\rho-My) \;dy \\
	&=
	\begin{cases}
	C_0 + C_1 x^{\frac{2}{\sigma^2}(\mu_0-M)} + C_2 x^{\frac{2}{\sigma^2}(\mu_0-M)-1} & \text{if } \mu_0-M \neq 0, \; \frac{\sigma^2}{2}\\
	\frac{2}{\sigma^2} (\rho \ln x_0 - M x_0) + \frac{2M}{\sigma^2} x - \frac{2\rho}{\sigma^2} \ln x  & \mbox{if } \mu_0-M = \frac{\sigma^2}{2}\\
 	\frac{2}{\sigma^2} (-\rho x^{-1}_0 - M \ln x_0) + \frac{2M}{\sigma^2} \ln x + \frac{2\rho}{\sigma^2} x^{-1} & \mbox{if } \mu_0-M=0,
    \end{cases}
\end{aligned}
\end{equation}
where the constants $C_i$ are given by
\[
	C_0 := -\frac{M x_0^{\frac{2}{\sigma^2}(\mu_0-M)}}{\mu_0-M} + \frac{\rho x_0^{\frac{2}{\sigma^2} (\mu_0-M)-1}}{\mu_0-\frac{\sigma^2}{2}-M}, \quad C_1 := \frac{M}{\mu_0-M}, \quad C_2 := -\frac{\rho}{\mu_0-\frac{\sigma^2}{2}-M} .
\]

Now, using the asymptotic properties \eqref{e:Asym_for_zeta} and \eqref{e:Asym_for_int_zeta_beta}, we can analyze the limit of $\varphi_2$ as follows.

\noindent\textbf{Case 1:} $\mu_0 < M$.

In this case, we get from \eqref{e:Asym_for_zeta} and \eqref{e:Asym_for_int_zeta_beta} that
\[
	\lim_{x\to 0^+} \zeta (x) = \pm \infty, \quad \lim_{x\to 0^+} \zeta(x_0)\varphi_2 (x_0) + \int_{x_0}^{x} \zeta(y)\beta(y) \;dy = \pm\infty.
\]
Thus, we can apply l'H\^{o}pital's rule and obtain
\begin{equation}\label{e:Apply_l'Hopital_rule_to_Soln_Formula}
	\lim_{x\to 0^+} \varphi_2 (x) = \lim_{x\to 0^+} \frac{\zeta(x_0)\varphi_2 (x_0) + \int_{x_0}^{x} \zeta(y)\beta(y) \;dy}{\zeta(x)} 
	= \lim_{x\to 0^+} \frac{\rho -Mx}{x(\mu(x)-M)} = \pm\infty
\end{equation}
since $\rho >0$. This shows the limit \eqref{e:lim_varphi_2=pminfty}.

\noindent\textbf{Case 2:} $M\leq\mu_0 \leq M + \frac{\sigma^2}{2}$.
\newline
\indent For this range of $\mu_0$, it follows from \eqref{e:Asym_for_zeta} and \eqref{e:Asym_for_int_zeta_beta} again that
\[
	\lim_{x\to 0^+} \zeta(x_0)\varphi_2 (x_0) + \int_{x_0}^{x} \zeta(y)\beta(y) \;dy = \pm\infty,
\]
but $\lim_{x\to 0^+} \zeta (x)$ exists and is finite. Hence, we can obtain the limit \eqref{e:lim_varphi_2=pminfty} by passing to the limit $x\to 0^+$ in the solution formula~\eqref{e:Soln_Formula_from_Integrating_Factor}.

\noindent\textbf{Case 3:} $\mu_0 > M + \frac{\sigma^2}{2}$.

In this final case, it follows from \eqref{e:Asym_for_zeta} and \eqref{e:Asym_for_int_zeta_beta} that $\lim_{x\to 0^+} \zeta (x) = 0$ and
\[
	J := \lim_{x\to 0^+} \zeta(x_0)\varphi_2 (x_0) + \int_{x_0}^{x} \zeta(y)\beta(y) \;dy
\]
exists and is finite. If $J\neq 0$, then passing to the limit $x\to\infty$ in the solution formula~\eqref{e:Soln_Formula_from_Integrating_Factor} will imply the limit~\eqref{e:lim_varphi_2=pminfty}. Otherwise, we can apply l'H\^{o}pital's rule and do the same computations we did in \eqref{e:Apply_l'Hopital_rule_to_Soln_Formula}. This proves the limit~\eqref{e:lim_varphi_2=pminfty}.

Putting together Cases 1,2 and 3 completes the proof.
\end{proof}

\begin{figure}
\begin{center}
\begin{tikzpicture}[scale=1, line width = 1.2 pt]
\fill[blue!20!white] (2.6,4) .. controls (2.2,2.7) and (1.9,1)..(1.8,0.5) -- (0.5,0.5) -- (0.5,4) -- cycle;
\draw[->] (0.5,0.5) -- (0.5,7) ;
\draw (0.5,7.2) node {$y$};
\draw[->](0,4) -- (8,4);
\draw (0,6) -- (7,6);
\draw (0.2,3.7) node {0};
\draw (0.2,5.7) node {1};
\draw (8,6) node {$y=1$};
\draw (8.2,4) node {$x$};
\draw [name path=search utility, blue!70!black, line width =1.2pt] (3.5,6) .. controls (3.9,6.4) and (4.6,6.6)..(5,6.7);
\draw [name path=search utility, blue!70!black, line width =1.2pt] (3.5,6) .. controls (3.2,5.6) and (3,5.3)..(2.6,4);
\draw [name path=search utility, blue!70!black, line width =1.2pt] (2.6,4) .. controls (2.2,2.7) and (1.9,1)..(1.8,0.5);
\draw (5.8,1.8) node {$\lim_{x\rightarrow 0+}  V^*_x(x)= -\infty$};
\draw[line width = 0.8, dotted] (3.5,4) -- (3.5,6);
\draw (3.5,3.7) node {$x_0$};
\draw (6,6.7) node {$y=V^*_x(x)$};
\draw [line width = 1pt,->](1.5,1) ..  controls (2.5,1) and (3.6,1.3) .. (3.8,1.5);
\end{tikzpicture}

\caption{If $V^*_x$ crosses $y=1$ from below at $x_0$, and it has not crossed from above before then we get a contradiction by Proposition \ref{p:zero}.}
\label{fig:boundarycrossing2}

\end{center}
\end{figure}
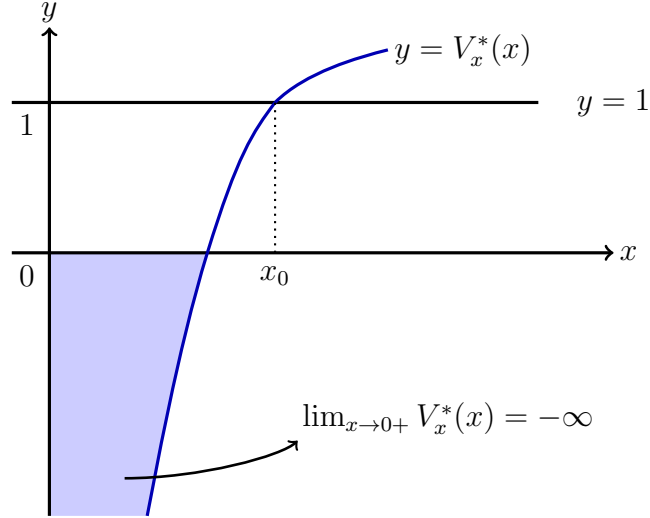

\begin{prop}\label{p:zero} There does not exist any $\eta>0$ such that $V_x^*(x)\leq 1, x\in (0,\eta]$.
\end{prop}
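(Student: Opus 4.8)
The plan is to derive a contradiction from the near-origin asymptotics established in Proposition~\ref{p:asymptotic}, combined with the monotonicity of $V^*$ from Theorem~\ref{thm2.1d}.

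First I would assume, for contradiction, that there is $\eta>0$ with $V_x^*(x)\le 1$ for all $x\in(0,\eta]$. On that interval the maximum in the HJB equation \eqref{e:HJB_1d} is attained in the second branch, so $V^*$ satisfies there
$$
\rho = x(\mu(x)-M)V_x^* + \tfrac12\sigma^2 x^2 V_{xx}^* + Mx .
$$
Setting $\varphi_2:=V_x^*\big|_{(0,\eta]}$ and dividing by $\tfrac12\sigma^2 x^2$ shows that $\varphi_2$ is a genuine $C^1$ solution (recall $V^*\in C^2(\R_+)$) of the linear first-order ODE \eqref{e:vp2} on $(0,\eta]$. Since $\mu$ is locally Lipschitz by Assumption~\ref{A:1} and $\rho=\rho^*>0$ by Theorem~\ref{thm2.1d}, Proposition~\ref{p:asymptotic} applies and gives $\lim_{x\to 0^+}V_x^*(x)=\pm\infty$.

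Next I would eliminate the minus sign: by Theorem~\ref{thm2.1d}(b) the value function $V^*$ is increasing, so $V_x^*\ge 0$ on $\R_{++}$, and hence the one-sided limit above must equal $+\infty$. But $V_x^*(x)\le 1$ on $(0,\eta]$ rules this out, a contradiction; therefore no such $\eta$ exists.

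There is no heavy computation here — the analytic work has already been done in Proposition~\ref{p:asymptotic} — so the only points requiring care are that the hypothesis $V_x^*\le 1$ genuinely selects the $f_x\le 1$ branch of \eqref{e:HJB_1d} (the boundary value $f_x=1$ causes no trouble since the two branches agree there), and that Proposition~\ref{p:asymptotic} concerns only the $x\to 0^+$ behavior of a solution, so it applies to the restriction of $V_x^*$ to $(0,\eta]$ without any information about $V^*$ away from the origin. I expect both to be immediate.
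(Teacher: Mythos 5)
Your proof is correct and follows essentially the same route as the paper: restrict $V_x^*$ to $(0,\eta]$, observe it solves the ODE \eqref{e:vp2} there, invoke Proposition \ref{p:asymptotic} to get $\lim_{x\to 0^+}V_x^*(x)=\pm\infty$, and rule out both signs using $V_x^*\geq 0$ and $V_x^*\leq 1$. The paper's proof is the same argument stated more tersely.
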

\begin{proof}
We will argue by contradiction. Assume there exists $\eta>0$ such that $V_x^*(x)\leq 1, x\in (0,\eta]$.
Then by \eqref{e:HJB_1d} we get that $V_x^*$ follows the ODE \eqref{e:vp2}
for all $x\in (0,\eta)$.
Making use of Proposition \ref{p:asymptotic} we get that
\[
\lim_{x\to 0+}V_x^*(x) = \lim_{x\to 0+}\vp_2(x) = \pm \infty
\]
which contradicts that $V_x^*\geq 0$ or that $V_x^*(x)\leq 1, x\in(0,\eta]$.
The proof is complete.
\end{proof}
The above Proposition shows that the scenario from Figure \ref{fig:boundarycrossing2} cannot happen.

\begin{prop}\label{p:infinity}
There does not exist any $\chi>0$ such that $V_x^*(x)\geq 1$ for all $x\geq \chi$.
\end{prop}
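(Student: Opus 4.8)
The plan is to argue by contradiction, exactly paralleling the structure of Proposition \ref{p:zero} but now working at infinity instead of at zero. Suppose there exists $\chi>0$ such that $V_x^*(x)\geq 1$ for all $x\geq\chi$. By the HJB equation \eqref{e:HJB_1d}, in the region where $f_x=V_x^*\geq 1$ the function $V_x^*$ satisfies the first-order linear ODE coming from the top branch, namely
\[
\frac{d\varphi_1}{dx}(x) + \frac{2\mu(x)}{\sigma^2 x}\varphi_1(x) = \frac{2\rho}{\sigma^2 x^2},
\]
so $V_x^*$ coincides with a solution $\varphi_1$ of this equation on $[\chi,\infty)$. The goal is to show that any such solution must eventually drop below $1$ (indeed, I expect it tends to $0$, or to $-\infty$, as $x\to\infty$), contradicting $V_x^*(x)\geq 1$ on all of $[\chi,\infty)$.

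The key computation is the asymptotics of $\varphi_1$ as $x\to\infty$, obtained by the method of integrating factors just as in Proposition \ref{p:asymptotic}. Writing $\gamma(y)=\frac{2\mu(y)}{\sigma^2 y}$ and $\zeta(x)=\exp\left(\int_{x_1}^x\gamma(y)\,dy\right)$, we have $\varphi_1(x)=\zeta(x)^{-1}\left(\zeta(x_0)\varphi_1(x_0)+\int_{x_0}^x\zeta(y)\beta(y)\,dy\right)$ with $\beta(y)=\frac{2\rho}{\sigma^2 y^2}$. Since $\mu(x)\to-\infty$, the integrand $\gamma(y)$ is eventually negative and in fact $\gamma(y)\to-\infty\cdot\frac1y$, so $\int_{x_1}^x\gamma(y)\,dy\to-\infty$ and hence $\zeta(x)\to 0$ as $x\to\infty$; more precisely $\zeta$ decays faster than any negative power of $x$. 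Then one checks that $\int^x\zeta(y)\beta(y)\,dy$ converges to a finite limit $J$ (because $\zeta(y)\beta(y)$ is integrable near infinity, $\zeta$ decaying rapidly while $\beta$ only decays like $y^{-2}$). If $J + \zeta(x_0)\varphi_1(x_0)\neq 0$ then $\varphi_1(x)\sim \text{const}/\zeta(x)\to\pm\infty$; and since we need $\varphi_1\geq 1>0$ we would need the constant positive, i.e. $\varphi_1(x)\to+\infty$. To rule this last possibility out — and this is where I expect the real work to be — I would use the growth bound \eqref{e.growth}: $V^*(x)/x^p\to 0$ for every $p\in(0,1)$, which forces $V_x^*$ to be sublinear on average and in particular prevents $V_x^*$ from growing to $+\infty$ too fast; combining this with the explicit rate $1/\zeta(x)$ (which grows faster than any power of $x$) gives the contradiction. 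If instead $J+\zeta(x_0)\varphi_1(x_0)=0$, apply l'Hôpital's rule to $\varphi_1(x)=\frac{\zeta(x_0)\varphi_1(x_0)+\int_{x_0}^x\zeta\beta}{\zeta(x)}$ to get $\lim_{x\to\infty}\varphi_1(x)=\lim_{x\to\infty}\frac{\beta(x)}{\gamma(x)}=\lim_{x\to\infty}\frac{\rho}{x\mu(x)}=0$, again contradicting $\varphi_1\geq 1$.

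The main obstacle, as indicated, is handling the borderline case where the integrating-factor formula would allow $\varphi_1$ to blow up to $+\infty$: here I cannot get an immediate sign contradiction from $V_x^*\geq 1$ alone, and must invoke the a priori growth estimate \eqref{e.growth} on $V^*$ itself to kill this branch. An alternative that sidesteps some of this is to integrate the relation $V_x^*(x)\to+\infty$ and note it would force $V^*(x)$ to grow at least linearly, directly contradicting \eqref{e.growth} with any $p<1$; I would present whichever is cleaner. Putting the cases together shows no such $\chi$ can exist, completing the proof.
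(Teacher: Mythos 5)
Your proof is correct and follows essentially the same route as the paper: the integrating-factor representation of $\varphi_1$ on $[\chi,\infty)$, the superpolynomial decay of $\zeta$ at infinity, and the growth bound \eqref{e.growth} to exclude the branch where the numerator tends to a positive limit (the paper treats the remaining cases by a direct sign argument — when $\zeta(x_0)\varphi_1(x_0)+\int_{x_0}^{\infty}\zeta\beta\leq 0$ the numerator is $\leq -\int_x^{\infty}\zeta\beta<0$, so $\varphi_1<0$ — rather than your l'H\^{o}pital computation in the borderline case, but both are valid). Note also that your closing remark already contains a one-line proof of the whole proposition: the hypothesis $V_x^*\geq 1$ on $[\chi,\infty)$ by itself forces $V^*$ to grow at least linearly, which contradicts \eqref{e.growth} for any $p\in(0,1)$ without any ODE analysis.
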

\begin{proof}
Once again we will argue by contradiction. Assume there exists $\chi>0$ such that $V_x^*(x)\geq 1$ for all $x\geq \chi$.
By \eqref{e:HJB_1d} $V_x^*$ will follow the ODE
\[
\frac{d\vp_1}{dx}(x) + \frac{2\mu(x)}{\sigma^2 x}\vp_1(x) = \frac{2\rho}{\sigma^2 x^2}
\]
for all $x\geq \chi$. As a result we get just as in Proposition \ref{p:asymptotic}
\begin{equation}\label{e:Soln_Formula_from_Integrating_Factor2}
	\varphi_1 (x) = \frac{\zeta(x_0)\varphi_1 (x_0) + \int_{x_0}^{x} \zeta(y)\beta(y) \;dy}{\zeta(x)},
\end{equation}
where the non-homogeneous term is $\beta(y):=\frac{2\rho}{\sigma^2 y^2}$, and the integrating factor is
\[
	\zeta(x) := e^{\int_{x_1}^{x} \gamma(y) \;dy},
\]
for $\gamma(y):=\frac{2\mu(y)}{\sigma^2 y}$, and arbitrary $x_0, x_1\in(\chi,\infty)$.
Under Assumption \ref{A:1} we can see that there exist constants $L>0$ and $c>0$ such that $\mu(y) < -c$ for all $y>L$, and hence, $\int_{L}^x \frac{\mu(y)}{y} \;dy \leq -c \int_{x_1}^x \frac{1}{y} \;dy = -c (\ln x - \ln x_1) \to -\infty$ as $x\to\infty$. If we choose $c> \frac{\sigma^2}{2}$, $x_1:=L$ we get
\begin{equation}\label{e:growth}
 x \zeta(x) \leq x^{1 - \frac{2c}{\sigma^2}} \to 0
\end{equation}
 as $x\to\infty$.
If
\[
\zeta(x_0)\varphi_1 (x_0) + \int_{x_0}^{\infty} \zeta(y)\beta(y) \;dy>0
\]
then by \eqref{e:growth} and the positivity of $\zeta$ one has
\[
\lim_{x\to\infty} \frac{V_x^*}{x}= \lim_{x\to\infty} \frac{\vp_1(x)}{x} = \frac{\zeta(x_0)\varphi_1 (x_0) + \int_{x_0}^{x} \zeta(y)\beta(y) \;dy}{x\zeta(x)}= + \infty
\]
which contradicts the growth condition \eqref{e.growth}.
Therefore we need
\[
\zeta(x_0)\varphi_1 (x_0) + \int_{x_0}^{\infty} \zeta(y)\beta(y) \;dy \leq 0.
\]
Note that in this case
\begin{equation*}
\begin{split}
\zeta(x_0)\varphi_1 (x_0) + \int_{x_0}^{x} \zeta(y)\beta(y) \;dy \leq  - \int_{x}^{\infty} \zeta(y)\beta(y) \;dy <0.
\end{split}
\end{equation*}
This implies, since $\zeta(x)>0$, that for $x>x_0$
\begin{equation*}
\begin{split}
V^*_x(x) &= \varphi_1 (x) = \frac{\zeta(x_0)\varphi_1 (x_0) + \int_{x_0}^{x} \zeta(y)\beta(y) \;dy}{\zeta(x)}<0,
\end{split}
\end{equation*}
which contradicts the assumption that $V_x^*(x)\geq 1$ for all $x\geq \chi$.
\end{proof}
The above Proposition shows that the scenario from Figure \ref{fig:boundarycrossing1} is not possible.

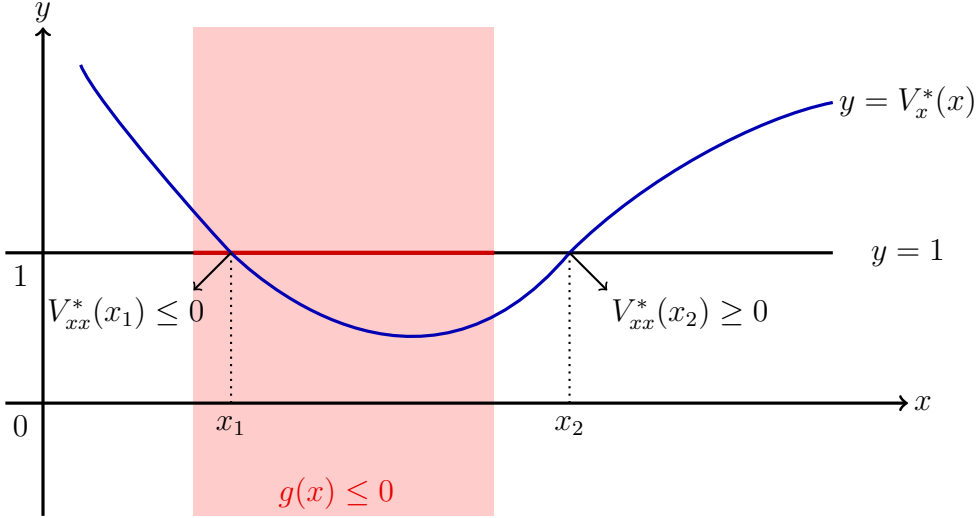
\begin{figure}
\begin{center}
\begin{tikzpicture}[scale=1, line width = 1.2 pt]
\fill[red!20!white] (2.5,0.5) rectangle +(4,6.5);
\draw[->] (0.5,0.5) -- (0.5,7) ;
\draw (0.5,7.2) node {$y$};
\draw[->](0,2) -- (12,2);
\draw (0,4) -- (11,4);
\draw[red!80!black, line width = 1.6 pt] (2.5,4) -- (6.5,4);
\draw (0.2,1.7) node {0};
\draw (0.2,3.7) node {1};
\draw (12,4) node {$y=1$};
\draw (12.2,2) node {$x$};
\draw [name path=search utility, blue!70!black, line width =1.2pt] (7.5,4) .. controls (8.5,5) and (10,5.8)..(11,6);
\draw [name path=search utility, blue!70!black, line width =1.2pt] (7.5,4) .. controls (5.5,1.6) and (3.2,3.8)..(3,4);
\draw [name path=search utility, blue!70!black, line width =1.2pt] (3,4) .. controls (2.8,4.2) and (1.2,6)..(1,6.5);
\draw[red!90!black] (4.4,0.8) node {$g(x) \leq 0$};
\draw[line width = 0.8, dotted] (3,2) -- (3,4);
\draw[line width = 0.8, dotted] (7.5,2) -- (7.5,4);
\draw (3,1.7) node {$x_1$};
\draw (7.5,1.7) node {$x_2$};
\draw (12,6) node {$y=V^*_x(x)$};
\draw[line width = 0.8, ->] (7.5,4) -- (8,3.5);
\draw (9.1,3.2) node {$V^*_{xx}(x_2) \geq 0$};
\draw[line width = 0.8, ->] (3,4) -- (2.5,3.5);
\draw (1.6,3.2) node {$V^*_{xx}(x_1) \leq 0$};
\end{tikzpicture}

\caption{An impossible scenario, by Proposition \ref{p:infinity}.}
\label{fig:boundarycrossing1}

\end{center}
\end{figure}

Set $g(x):=\rho - x\mu(x)$. By assumption $p(x):=x\mu(x)$ has a unique maximum and $\mu$ is locally Lipschitz and decreasing with $\lim_{x\to\infty} \mu(x)=-\infty$. This implies that $g(x)$ has a unique minimum for some $x_\iota\in(0,\infty)$ \footnote{$|\mu(x) - \mu(0)| < M|x|$ for some real $M>0$ as $\mu$ locally Lipschitz by asssumption. Therefore $|g(x) - g(0)| < Mx^2$, so $g$ differentiable at 0. Moreover, $g'(0) = - \mu(0)$, and $\mu(0)>0$ by assumption, so $x_\iota \neq 0$.}. If $g(x_\iota)<0$ then $g$ intersects the $x$ axis in exactly two points $0<\alpha_1<\alpha_2<\infty$. If $g(x_\iota)>0$ there is no intersection of $g$ with the $x$ axis. Finally, if $g(x_\iota)=0$ there is exactly one intersection and this happens at $x=x_\iota$.
\begin{prop}\label{p:crossing}
The function $V^*_x$ crosses the line $y=1$ at most three times. More specifically, we have the following possibilities:
\begin{enumerate}[(I)]
\item If  $g(x_\iota)<0$ then
\begin{enumerate}[(i)]
\item For $0\leq x<\alpha_1$ the function $V_x^*$ can only pass the line $y=1$ at most once and the crossing has to be from below.
\item For $x>\alpha_2$ the function $V_x^*$ can pass the line $y=1$ at most once and the crossing has to be from below.

\item For $\alpha_1<x<\alpha_2$ the function $V_x^*$ can pass the line $y=1$ at most once and the crossing has to be from above.
\end{enumerate}
\item If  $g(x_\iota)>0$ then the function $V_x^*$ can pass the line $y=1$ at most once and the crossing has to be from below.

\item If  $g(x_\iota)=0$ then $V_x^*$ can cross the line $y=1$ at most three times. In particular, the possible crossing(s) in $(0,x_\iota) \cup (x_\iota,\infty)$ must be from below.

\end{enumerate}

\end{prop}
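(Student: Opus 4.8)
The plan is to read off the second derivative of $V^*$ at the crossing points directly from the HJB equation \eqref{e:HJB_1d}. First I would note that on the line $\{V^*_x=1\}$ the two branches of \eqref{e:HJB_1d} agree, so if $V^*_x(x_0)=1$ then
\[
\rho = x_0\mu(x_0) + \tfrac12\sigma^2 x_0^2\, V^*_{xx}(x_0),
\qquad\text{i.e.}\qquad
V^*_{xx}(x_0) = \frac{2g(x_0)}{\sigma^2 x_0^2},
\]
where $g(x)=\rho-x\mu(x)$. Hence the sign of $V^*_{xx}(x_0)$ equals the sign of $g(x_0)$ at every point where $V^*_x=1$. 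In particular, where $g>0$ the function $V^*_x$ is strictly increasing whenever it hits the value $1$, so every such point is a transversal crossing \emph{from below}; where $g<0$ every such point is a transversal crossing \emph{from above}; and $V^*_x\equiv 1$ cannot hold on a nondegenerate interval, since that would force $g\equiv 0$ there, contradicting that $p(x)=x\mu(x)$ has a unique maximum (Assumption \ref{A:1}). So on any open interval on which $g$ has a fixed strict sign, the zero set of $V^*_x-1$ is discrete.

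Next I would bound the number of crossings on such an interval. On an interval $I$ where $g>0$, suppose $a<b$ are two consecutive points of $\{V^*_x=1\}\cap I$; then $V^*_x-1$ has a constant sign on $(a,b)$. If that sign is positive, then $V^*_x$ approaches $1$ from above as $x\to b^-$, forcing $V^*_{xx}(b)\leq 0$, which contradicts $g(b)>0$; if it is negative, then $V^*_x$ leaves the value $1$ going downward at $a$, forcing $V^*_{xx}(a)\leq 0$, again a contradiction. Hence $V^*_x=1$ at most once on $I$, and the unique crossing (if any) is from below. The mirror argument shows that on an interval where $g<0$ there is at most one crossing, and it is from above. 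It then remains to assemble these facts using the shape of $g$ described just before the statement: if $g(x_\iota)<0$, then $g>0$ on $(0,\alpha_1)$ and on $(\alpha_2,\infty)$ while $g<0$ on $(\alpha_1,\alpha_2)$, which gives exactly the three sub-claims (i)--(iii) and a total of at most three crossings; if $g(x_\iota)>0$, then $g>0$ on all of $(0,\infty)$, so there is at most one crossing and it is from below; and if $g(x_\iota)=0$, then $g>0$ on $(0,x_\iota)\cup(x_\iota,\infty)$, giving at most one crossing from below on each piece, plus possibly one more at $x_\iota$ itself, for at most three.

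The only genuinely delicate point is the behaviour at the isolated zeros of $g$ --- namely $\alpha_1,\alpha_2$ in case (I) and $x_\iota$ in case (III) --- where $V^*_{xx}=0$ and the sign argument gives no information about whether a crossing occurs or in which direction. The statement handles case (I) by phrasing (i)--(iii) on the open intervals $(0,\alpha_1)$, $(\alpha_1,\alpha_2)$, $(\alpha_2,\infty)$, and in case (III) it explicitly allows an ambiguous crossing at $x_\iota$; so all I would need is that these exceptional points add at most the indicated number of crossings, which again follows from the fact that $V^*_x-1$ cannot vanish on a subinterval near them. Everything else --- matching ``from below''/``from above'' with the sign of $V^*_{xx}$ and tallying the count --- is routine bookkeeping, so I expect the computation and sign analysis above to be essentially the whole proof.
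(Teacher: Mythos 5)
Your proposal is correct and follows essentially the same route as the paper: both read off $V^*_{xx}(x_0)=\tfrac{2g(x_0)}{\sigma^2x_0^2}$ at points where $V^*_x=1$ from the HJB equation and match the crossing direction to the sign of $g$ on each of the intervals determined by $\alpha_1,\alpha_2$ (or $x_\iota$). You actually spell out more carefully than the paper why two consecutive crossings of the same type are impossible on an interval of fixed sign of $g$, and why $V^*_x$ cannot equal $1$ on a nondegenerate interval, but the underlying argument is the same.
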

\begin{proof}
It follows from the HJB equation \eqref{e:HJB} with $\vp:=V_x$ that if $\varphi(x_0)=1$, then we have
\[
g(x_0) = \rho - x_0\mu (x_0) = \frac{1}{2}\sigma^2 x_0^2 \varphi'(x_0)\begin{cases}
<0 & \mbox{if $\varphi'(x_0)<0$} \\
=0 & \mbox{if $\varphi'(x_0)=0$} \\
>0 & \mbox{if $\varphi'(x_0)>0$}.
\end{cases}
\]
Therefore, when $\varphi$ crosses the line $y=1$, we obtain some information from $g$.
More precisely, we can infer the following:
\begin{enumerate}[(I)]
\item When $g(x_\iota)<0$ the function $g(x) = \rho - x\mu (x)$ has exactly two zeros at $\alpha_1, \alpha_2$ with $0<\alpha_1<\alpha_2<\infty$.
\begin{enumerate}[(i)]
\item for $0\leq x<\alpha_1$ we have $g(x) > 0$, hence $\varphi$ is only allowed to cross the line $y=1$ from below in this region;
\item for  $x>\alpha_2$ we have $g(x) > 0$, hence $\varphi$ is only allowed to cross the line $y=1$ from below in this region;
\item for $\alpha_1<x<\alpha_2$, $g(x)<0$ and $\varphi$ is only allowed to cross the line $y=1$ from above in this region.
\end{enumerate}
\item If $g(x_\iota)>0$ then $g(x)> 0$ for all $x\in\R_+$.
The function $V_x^*$ can pass the line $y=1$ at most once and the crossing has to be from below.
\item If $g(x_\iota)=0$ then  $g(x)$ has a unique intersection of the $x$ axis at $x_\iota$.
As a consequence $g(x)\geq 0$ and the function $V_x^*$ can pass the line $y=1$ at most thrice: at most once from below in the region $x<x_\iota$, at most once from below in the region $x>x_\iota$ and at most once from above or from below at the point  $x= x_\iota$.
\end{enumerate}

\end{proof}
\begin{rmk}
By the analysis above one can note that at the intersection points (or roots) $\alpha_{1,2}$ of the function $g(x)$ with the $x$ axis the derivative of $\varphi$ is $0$. This makes it more complicated to say, in case there is a crossing at a root, if the crossing is from above or from below. However, this does not require us to change our arguments. For example, if there is a crossing from below on $0\leq x<\alpha_1$ and there is a crossing at $x=\alpha_1$ then the crossing at $\alpha_1$ is necessarily from above. This then implies that there can be no crossing for $x\in(\alpha_1,\alpha_2)$ because in this region the crossing has to be from above and there cannot be two crossings from above in a row.
\end{rmk}

\begin{proof}[Proof of Theorem \ref{t:main}]
A direct consequence of Proposition \ref{p:crossing} is that $V_x^*$ can cross the line $y=1$ at most three times. We also know, given the at most two possible solutions $\alpha_{1,2}$ of the equation $g(x)=0$ how these crossings have to happen. Next, we eliminate all but one possibility.
\begin{enumerate}[i)]
\item If we get a crossing from below in $(0,\alpha_1)$ this means that there exists $\eta>0$ such that for all $x\in (0,\eta)$ we have  $V_x^*(x)=\varphi_2(x) \leq 1$. This is not possible by Proposition \ref{p:zero}. As such there can be no crossings in $(0,\alpha_1)$.
\item If we have a crossing from below in $(\alpha_2,\infty)$ then there is $\zeta>0$ such that for all $x\geq \zeta$
\[
V_x^*(x)=\varphi_1(x)\geq 1.
\]
This is not possible by Proposition \ref{p:infinity}.
Therefore, there are no crossings in $ (\alpha_2,\infty)$.
\item We cannot have that $V_x^*(x)\geq 1$ for all $x\in(0,\infty)$ because then we get a contradiction by Proposition \ref{p:infinity}. Similarly, we cannot have $ V_x^*(x)\leq 1$ for all $x\in(0,\infty)$ since we get a contradiction by Proposition \ref{p:zero}.
\item If $g(x_\iota)>0$ then, in principle, there could be at most one crossing and this would have to be from below. But this creates a contradiction by either using Proposition \ref{p:zero} or Proposition \ref{p:infinity}. If there is no crossing then we get a contradiction by (iii) above.
\item If $g(x_\iota)=0$ then
\begin{enumerate}
\item If there is no crossing, then we get a contradiction by part iii) above.
\item If there are two crossings then we get contradictions from either Proposition \ref{p:zero} or Proposition \ref{p:infinity}.
\item If there are three crossings then we must have a crossing from below in $(0,x_\iota)$, one from above at $x=x_\iota$ and one from below in $(x_\iota,\infty)$. This yields a contradiction because of Proposition \ref{p:zero}.
\item If there is just one crossing and the crossing is from below then we get a contradiction by Proposition \ref{p:infinity}.
\end{enumerate}
\item By parts i)-iv) we get that there is exactly one crossing of the line $y=1$, that this crossing is from above and that the crossing happens at a point in the interval $[\alpha_1,\alpha_2]$ when $g(x_\iota)<0$ or at $x_\iota$ if $g(x_\iota)=0$.
\end{enumerate}

\begin{figure}
\begin{center}
\begin{tikzpicture}[scale=1, line width = 1.2 pt]
\fill[red!20!white] (2.5,0.5) rectangle +(4,6.5);
\draw[->] (0.5,0.5) -- (0.5,7) ;
\draw (0.5,7.2) node {$y$};
\draw[->](0,2) -- (12,2);
\draw (0,4) -- (11,4);
\draw[red!80!black, line width = 1.6 pt] (2.5,4) -- (6.5,4);
\draw (0.2,1.7) node {0};
\draw (0.2,3.7) node {1};
\draw (12,4) node {$y=1$};
\draw (12.2,2) node {$x$};
\draw [name path=search utility, blue!70!black, line width =1.2pt] (7.5,2.3) .. controls (8.5,2.2) and (10,2.2)..(11,2.2);
\draw [name path=search utility, blue!70!black, line width =1.2pt] (7.5,2.3) .. controls (5.5,2.4) and (3.6,3.3)..(3,4);
\draw [name path=search utility, blue!70!black, line width =1.2pt] (3,4) .. controls (2.8,4.2) and (1.2,6)..(1,6.5);
\draw[red!90!black] (4.4,0.8) node {$g(x) \leq 0$};
\draw[line width = 0.8, dotted] (3,2) -- (3,4);
\draw (3,1.7) node {$x_1$};
\draw (12,2.4) node {$y=V^*_x(x)$};
\end{tikzpicture}

\caption{The only case which doesn't lead to a contradiction is when $V^*_x$ crosses $y=1$ only once and the crossing is from above.}
\label{fig:boundarycrossing3}

\end{center}
\end{figure}
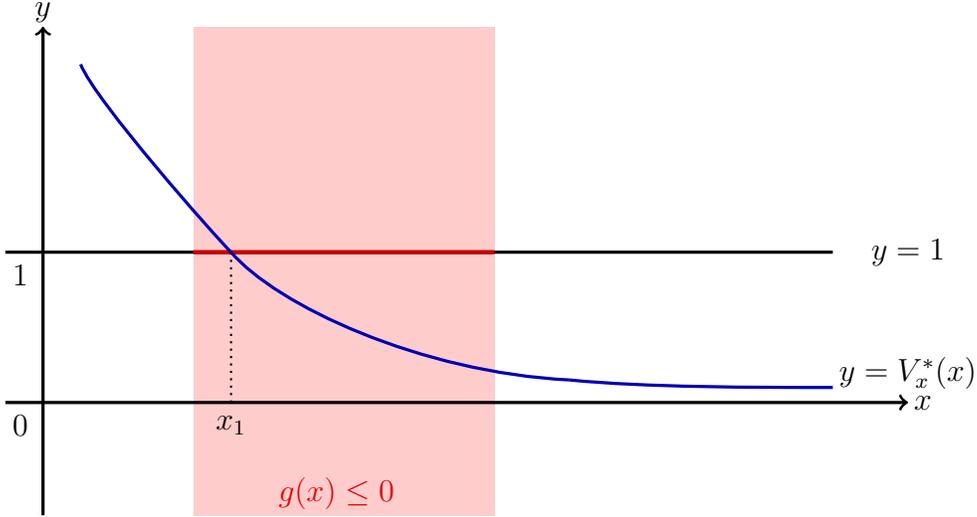


This, together with \eqref{e:v_1d}, implies that the optimal strategy is of bang-bang type
\[
\begin{aligned}
v(x)
&= \begin{cases}
0 & \mbox{if $0< x\leq x^*$} \\
M & \mbox{if $x>x^*$.}
\end{cases}
\end{aligned}
\]
Moreover, one can see that $g(x_\iota)\leq 0$ which in turn forces
\[
\rho \leq \sup_{x\in \R_+} x\mu(x) = x_\iota \mu(x_\iota).
\]

\end{proof}

\section{Optimal harvesting with concave and convex yields: proofs}\label{s:cont}

This appendix shows that for a class of yield functions $\Phi$ one can get \textit{continuous} optimal harvesting strategies.
Therefore, the optimal harvesting strategy will be discontinuous. One might wonder under which conditions on $\Phi$ the optimal harvesting strategies will be continuous.

We proved in Theorem \ref{thm2.1d} that the HJB equation

\begin{equation*}
\max_{u\in U}\Big[\op_u V(x)+\Phi(xu)\Big]=\rho
\end{equation*}
admits a classical solution $V^*\in C^2(\R_+)$ satisfying $V^*(1)=0$ and $\rho=\rho^*> 0$.

For any given $\Phi$, we define
\[
F(\omega) := -A\omega + \Phi (\omega),
\]
where $A$ is a shorthand of $V_x^*$, that is,
\[
A := V_x^*(x).
\]
For any fixed $x$, we can see $A$ as a constant. Using these shorthands, we can rewrite the HJB equation as
\begin{equation}\label{e:Simplified_Min_Prin_in_F}
\begin{aligned}
F(xv) = \max_{\omega\in [0,L]} F(\omega),
\end{aligned}
\end{equation}
where $L:=xM$. A direct computation yields
\[\left\{\begin{aligned}
F(0) &= -A\cdot 0 + \Phi(0) = 0 \\
F(L) &= -AL + \Phi(L)\\
F'(\omega) &= -A + \Phi'(\omega)
\end{aligned}\right.\]
because $\Phi (0) = 0$. Therefore, the critical point(s) will be given by $\omega_c = [\Phi']^{-1}(A)$, and
\[
F(\omega_c) = -A\omega_c + \Phi(\omega_c) = -A[\Phi']^{-1}(A) + \Phi([\Phi']^{-1}(A)).
\]
If $\Phi$ is assumed to be strictly concave, the maximum on the right hand side of \eqref{e:Simplified_Min_Prin_in_F} can be found easily because $F''=\Phi''$.
\cont*

\begin{proof}
Assume that $\Phi$ is $C^2$ and strictly concave.
Since $\Phi$ is $C^2$ we have that $\Phi''<0$. In this case, $\Phi'$ is strictly decreasing, so its inverse is well-defined. As a result, we have a unique critical point which is a maximum $\omega_c = [\Phi']^{-1}(A)$. A standard calculus result yields
\[\begin{aligned}
\max_{\omega\in [0,L]} F(\omega) &= \begin{cases}
\max \left\{ F(0),F(L) \right\} & \mbox{if $\omega_c \not\in (0,L)$} \\
\max \left\{ F(0),F(\omega_c),F(L) \right\} & \mbox{if $0<\omega_c<L$}
\end{cases}\\
&= \begin{cases}
0 & \mbox{if $\omega_c \leq 0$} \\
F(\omega_c) & \mbox{if $0<\omega_c<L$}\\
F(L) & \mbox{if $\omega_c \geq L$},
\end{cases}
\end{aligned}\]
where we used the fact that $F(0)=0$ and the concavity of $\Phi$ in the last equality.

Depending on the maximum point, we have the corresponding optimal Markov control:
\[\begin{aligned}
v &= \begin{cases}
0 &\mbox{if $\displaystyle\max_{\omega\in [0,L]} F(\omega)=0$}\\
\displaystyle\frac{[\Phi']^{-1}(A)}{x} &\mbox{if $\displaystyle\max_{\omega\in [0,L]} F(\omega)=F(\omega_c)$}\\
M &\mbox{if $\displaystyle\max_{\omega\in [0,L]} F(\omega)=F(L)$}
\end{cases} \\
&= \begin{cases}
0 &\mbox{if $[\Phi']^{-1}(A)\leq 0$}\\
\displaystyle\frac{[\Phi']^{-1}(A)}{x} &\mbox{if $0<[\Phi']^{-1}(A)<xM$}\\
M &\mbox{if $[\Phi']^{-1}(A)\geq xM$},
\end{cases} \\
\end{aligned}\]
because $v$ is the solution to
\[
-Axv + \Phi(xv) = \max_{\omega\in [0,L]} F(\omega).
\]

In conclusion, in this case, $v$ depends on $A:=\dfrac{d V^*}{d x}(x)$ continuously. Hence, since $V^*\in C^2\left(\R_+\right)$ we conclude that $v$ is continuous.

The HJB equation \eqref{e:HJB} becomes
\begin{equation*}
\begin{split}
\rho &=\max_{u\in U}\Big[ x(\mu(x)-u)f_x+\dfrac12\sigma^2 x^2f_{xx} + \Phi(xu) \Big] \\ &=x\mu(x)f_x+\dfrac12\sigma^2x^2f_{xx} +\max_{u\in U}[ (\Phi(xu)-xu f_x)]\\
 &= \begin{cases}
x\mu(x)f_x+\dfrac12\sigma^2 x^2f_{xx} & \mbox{if $[\Phi']^{-1}(f_x(x))\leq 0$},\\
x\mu(x)f_x+\dfrac12\sigma^2 x^2f_{xx} -f_x[\Phi']^{-1}(f_x) + \Phi([\Phi']^{-1}(f_x)) &\mbox{if $0<[\Phi']^{-1}(f_x(x))<xM$},\\
x(\mu(x)-M)f_x+\dfrac12\sigma^2 x^2f_{xx}+\Phi(xM) &\mbox{if $[\Phi']^{-1}(f_x(x))\geq xM$.}
\end{cases}
\end{split}
\end{equation*}

\end{proof}

The case when the yield function $\Phi$ is convex is qualitatively similar to the case when the yield function is linear, and the optimal solution is of the bang-bang type. We can improve Theorem \ref{t:main} as follows.
\mainvex*

\begin{proof}
This proof is similar to the proof of Theorem \ref{t:main} in the appendix. By \eqref{e:max_princ}
\[
\frac{d V^*}{dx}(x)[x(\mu(x)-v(x))] + \Phi(xv(x)) = \max_{u \in U}\left( \frac{d V^*}{dx}(x)[x(\mu(x)-u)] + \Phi(xu)\right).
\]
Dropping the common terms gives
\[
- \frac{d V^*}{dx}(x) xv(x) + \Phi(xv(x)) = \max_{u \in U}\left( - \frac{d V^*}{dx}(x)xu + \Phi(xu)\right).
\]
With $x>0$, the right hand side is a weakly convex function of $u$, so one of the end points of the interval $U$ achieves the maximum. This already shows that the optimal control is bang-bang, but says nothing else of the shape of $v(x)$.
\newline
$\Phi(0) = 0$, so
\[
\max_{u \in U}\left( -\frac{d V^*}{dx}(x)xu + \Phi(xu)\right) =
\begin{cases}
-\frac{d V^*}{dx}(x)xM + \Phi(xM), & \text{ if }\frac{\Phi(xM)}{xM} > \frac{d V^*}{dx}(x), \\
0, & \text{ else.}
\end{cases}
\]
This implies
\[
v(x) =
\begin{cases}
0, & \text{ if } V^*_x < \frac{\Phi(xM)}{xM}, \\
M, & \text{ if } V^*_x \geq \frac{\Phi(xM)}{xM}.
\end{cases}
\]
$\Phi(\cdot)$ is weakly convex, therefore, for $\alpha \in (0,1)$, $\Phi(\alpha x + (1-\alpha) y ) \leq \alpha \Phi(x) + (1-\alpha)\Phi(y)$. By assumption, it is also continuous and positive valued. So, for $\alpha \in (0,1)$, $\alpha \Phi(xM) \geq \Phi(\alpha xM)$, equivalent with $\Phi(xM) \geq \frac{1}{\alpha} \Phi(\alpha xM)$, equivalent with $\frac{\Phi(xM)}{xM} \geq \frac{\Phi(\alpha xM)}{\alpha xM}$ if $x,M>0$. Therefore $\frac{\Phi(xM)}{xM}$ must be positive and monotonically increasing in $x$ for $M>0$, $x>0$. In particular $\Phi'(0) = \lim_{x\rightarrow 0^+} \frac{\Phi(xM)}{xM}$ exists and it is greater or equal to 0.
\newline
The HJB equation \ref{e:HJB} becomes
\begin{equation}\label{e:HJB2}
\rho =
\begin{cases}
x\mu(x)f_x+\dfrac12\sigma^2 x^2f_{xx} & \mbox{if $f_x> \frac{\Phi(xM)}{xM}$},\\
x(\mu(x)-M)f_x+\dfrac12\sigma^2 x^2f_{xx} +\Phi(Mx) &\mbox{if $f_x\leq \frac{\Phi(xM)}{xM}$}.
\end{cases}
\end{equation}
One can easily modify the proofs from Appendix \ref{a:proofs} to show the following four propositions:
\begin{prop}\label{p:asymptotic2}
Assume $\mu, \Phi$ satisfy the assumptions of Theorem \ref{t:mainvex}. Then any solution $\varphi_2$ of the ODE
\begin{equation}\label{e:vp22}
\frac{d\vp_2}{dx}(x) + \frac{2(\mu(x) -M)}{\sigma^2 x}\vp_2(x) = \frac{2(\rho-\Phi(Mx))}{\sigma^2 x^2}\end{equation}
satisfies
\begin{equation}\label{e:lim_varphi_2=pminfty}
\lim_{x\to 0^+} \varphi_2 (x)  = \pm\infty.
\end{equation}
\end{prop}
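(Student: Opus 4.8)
The plan is to mimic the proof of Proposition \ref{p:asymptotic} essentially line by line, the only change being the treatment of the source term, which now reads $\beta(y):=\frac{2(\rho-\Phi(My))}{\sigma^2 y^2}$ in place of $\frac{2(\rho-My)}{\sigma^2 y^2}$. First I would solve \eqref{e:vp22} by the method of integrating factors, obtaining
\[
\vp_2(x)=\frac{\zeta(x_0)\vp_2(x_0)+\int_{x_0}^x\zeta(y)\beta(y)\,dy}{\zeta(x)},\qquad \zeta(x):=\exp\left(\int_{x_1}^x\frac{2(\mu(y)-M)}{\sigma^2 y}\,dy\right),
\]
for arbitrary $x_0,x_1\in(0,\infty)$. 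Since $\mu$ is locally Lipschitz at $0$, the computation leading to \eqref{e:Asym_for_zeta} is unchanged and gives $\zeta(x)\sim x^{\frac{2}{\sigma^2}(\mu_0-M)}$ as $x\to 0^+$, with $\mu_0:=\mu(0)$.

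The key new observation is that, because $\Phi$ is continuous with $\Phi(0)=0$, one can choose $x_0$ so small that $\rho-\Phi(My)>0$ on $(0,x_0)$ and the asymptotic for $\zeta$ holds there; and, because $\Phi\in C^1(\R_+)$ with $\Phi(0)=0$, we have $\Phi(My)\le Cy$ on $(0,x_0)$ for a constant $C$ depending on $M$ and $\sup_{[0,Mx_0]}\Phi'$. Hence $\zeta(y)\beta(y)\sim\frac{2}{\sigma^2}y^{\frac{2}{\sigma^2}(\mu_0-M)-2}(\rho-\Phi(My))$, whose leading part as $y\to 0^+$ is precisely $\frac{2\rho}{\sigma^2}y^{\frac{2}{\sigma^2}(\mu_0-M)-2}$, identical to the linear case, while the remaining piece is $O\big(y^{\frac{2}{\sigma^2}(\mu_0-M)-1}\big)$ and upon integration plays exactly the role of the $-My$ correction term in \eqref{e:Asym_for_int_zeta_beta}. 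Consequently $\int_x^{x_0}\zeta(y)\beta(y)\,dy$ exhibits the same three-case behaviour in $x$ as before, according to whether $\mu_0-M$ is negative, equal to $\tfrac{\sigma^2}{2}$, equal to $0$, or strictly between $0$ and $\tfrac{\sigma^2}{2}$.

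Finally I would run the three cases exactly as in Proposition \ref{p:asymptotic}. If $\mu_0<M$, both $\zeta(x)$ and the numerator tend to $\pm\infty$, and l'H\^{o}pital's rule gives $\lim_{x\to0^+}\vp_2(x)=\lim_{x\to0^+}\frac{\rho-\Phi(Mx)}{x(\mu(x)-M)}=\pm\infty$ since $\rho>0$. If $M\le\mu_0\le M+\tfrac{\sigma^2}{2}$, the numerator blows up while $\zeta$ has a finite, nonzero limit, so the quotient blows up. If $\mu_0>M+\tfrac{\sigma^2}{2}$, then $\zeta(x)\to0$ while the numerator tends to a finite limit $J$, so either $J\ne 0$ and the quotient blows up, or $J=0$ and l'H\^{o}pital applies as in the first case; either way one gets the limit \eqref{e:lim_varphi_2=pminfty}. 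The only delicate point is the bookkeeping of the exponents of $y$ once a general $\Phi$ replaces the linear yield, but since the dominant contribution to the integrand is literally the same $\frac{2\rho}{\sigma^2}y^{\frac{2}{\sigma^2}(\mu_0-M)-2}$ as in the linear setting, and the $\Phi$-dependent part is of strictly lower order, no genuinely new estimate is needed — which is exactly why the proposition follows by an easy modification of the earlier argument.
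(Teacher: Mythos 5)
Your proposal is correct and follows essentially the same route as the paper: reduce to the integrating-factor formula and show that the $\Phi$-dependent part of the source term contributes only a lower-order correction, so the three-case analysis of Proposition \ref{p:asymptotic} carries over verbatim. The only cosmetic difference is that you control $\Phi(My)$ by the $C^1$ bound $\Phi(My)\le Cy$ near $0$, whereas the paper sandwiches $\Phi(My)/(My)$ between $\Phi'(0)$ and $\Phi(Mx_0)/(Mx_0)$ using convexity; both yield the same conclusion that the dominant integrand is $\frac{2\rho}{\sigma^2}y^{\frac{2}{\sigma^2}(\mu_0-M)-2}$.
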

\begin{proof}
Proceed similarly to the proof of Proposition \ref{p:asymptotic}, replacing the definition $\beta(y): = \frac{2(\rho - \Phi(My))}{\sigma^2 y^2}$. This time,
\begin{equation*}
\int_{x}^{x_0} \zeta(y)\beta(y) \;dy \sim \frac{2}{\sigma^2} \int_{x}^{x_0} y^{\frac{2}{\sigma^2}(\mu_0-M)-2} (\rho-\Phi(My)) \;dy.
\end{equation*}
For $y \in [0,x_0]$, we have $\Phi'(0) \leq \frac{\Phi(My)}{My} \leq \frac{\Phi(Mx_0)}{Mx_0}$, so
\begin{align*}
\frac{2}{\sigma^2} \int_{x}^{x_0} y^{\frac{2}{\sigma^2}(\mu_0-M)-2} (\rho-\frac{\Phi(Mx_0)}{Mx_0}My) \;dy &\leq
\frac{2}{\sigma^2} \int_{x}^{x_0} y^{\frac{2}{\sigma^2}(\mu_0-M)-2} (\rho-\Phi(My)) \;dy \\
 &\leq \frac{2}{\sigma^2} \int_{x}^{x_0} y^{\frac{2}{\sigma^2}(\mu_0-M)-2} (\rho-\Phi'(0) My) \;dy.
\end{align*}
For a general positive constant $N$,
\[
\frac{2}{\sigma^2} \int_{x}^{x_0} y^{\frac{2}{\sigma^2}(\mu_0-M)-2} (\rho-Ny) \;dy =
	\begin{cases}
	C_0 + C_1 x^{\frac{2}{\sigma^2}(\mu_0-M)} + C_2 x^{\frac{2}{\sigma^2}(\mu_0-M)-1} & \text{if } \mu_0-M \neq 0, \; \frac{\sigma^2}{2}\\
	\frac{2}{\sigma^2} (\rho \ln x_0 - N x_0) + \frac{2N}{\sigma^2} x - \frac{2\rho}{\sigma^2} \ln x  & \mbox{if } \mu_0-M = \frac{\sigma^2}{2}\\
 	\frac{2}{\sigma^2} (-\rho x^{-1}_0 - N \ln x_0) + \frac{2N}{\sigma^2} \ln x + \frac{2\rho}{\sigma^2} x^{-1} & \mbox{if } \mu_0-M=0,
    \end{cases}
\]
where the integration constants are given by
\[
	C_0 := -\frac{N x_0^{\frac{2}{\sigma^2}(\mu_0-M)}}{\mu_0-M} + \frac{\rho x_0^{\frac{2}{\sigma^2} (\mu_0-M)-1}}{\mu_0-\frac{\sigma^2}{2}-M}, \quad C_1 := \frac{N}{\mu_0-M}, \quad C_2 := -\frac{\rho}{\mu_0-\frac{\sigma^2}{2}-M} .
\]
Now the case-by-case analysis of Proposition \ref{p:asymptotic} can be repeated similarly because the constants of the dominant terms in the expression above do not depend on $N$.
\end{proof}
\begin{prop}\label{p:zero2} There does not exist any $\eta>0$ such that $V_x^*(x)\leq \frac{\Phi(xM)}{xM}, x\in (0,\eta]$.
\end{prop}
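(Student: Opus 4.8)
The plan is to run the contradiction argument of Proposition \ref{p:zero} essentially verbatim, with Proposition \ref{p:asymptotic2} replacing Proposition \ref{p:asymptotic}. I would begin by supposing, for contradiction, that there exists $\eta>0$ with $V_x^*(x)\leq\frac{\Phi(xM)}{xM}$ for every $x\in(0,\eta]$. On the interval $(0,\eta)$ this places us in the second (``harvest at rate $M$'') branch of the HJB equation \eqref{e:HJB2}, so writing $\varphi:=V_x^*$ the function $\varphi$ must solve
\[
\tfrac12\sigma^2 x^2\varphi'(x)+x\bigl(\mu(x)-M\bigr)\varphi(x)=\rho-\Phi(Mx),\qquad x\in(0,\eta),
\]
which after dividing by $\tfrac12\sigma^2 x^2$ is precisely the ODE \eqref{e:vp22}. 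As in Remark \ref{r:V_x=1}, the ambiguity at points where $V_x^*=\frac{\Phi(xM)}{xM}$ is harmless, since the diffusion is nondegenerate and modifying the drift on a Lebesgue-null set changes nothing, so $\varphi$ may be taken to satisfy \eqref{e:vp22} throughout $(0,\eta)$.

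Next I would invoke Proposition \ref{p:asymptotic2} to deduce $\lim_{x\to0^+}V_x^*(x)=\lim_{x\to0^+}\varphi(x)=\pm\infty$. This contradicts the a priori bounds on $V_x^*$ near the origin: by the monotonicity property \eqref{increase} we have $V_x^*\geq0$, while on $(0,\eta]$ the standing hypothesis gives $V_x^*(x)\leq\frac{\Phi(xM)}{xM}$, and (as shown in the proof of Theorem \ref{t:mainvex}) weak convexity of $\Phi$ together with $\Phi(0)=0$ and $\Phi\in C^1$ forces $x\mapsto\frac{\Phi(xM)}{xM}$ to be nondecreasing with finite limit $\Phi'(0)\geq0$ as $x\to0^+$. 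Hence $0\leq V_x^*(x)\leq\frac{\Phi(xM)}{xM}$ is bounded on $(0,\eta]$, ruling out a limit of $\pm\infty$ and completing the contradiction.

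I do not expect a genuine obstacle here: this is the convex-yield analogue of Proposition \ref{p:zero} and the structure is identical, the real work having been done in Proposition \ref{p:asymptotic2}. The only items needing a line of verification are that the hypotheses of Proposition \ref{p:asymptotic2} apply — in particular $\rho-\Phi(Mx)>0$ for $x$ small, which follows from $\rho=\rho^*>0$ (Theorem \ref{thm2.1d}), continuity of $\Phi$, and $\Phi(0)=0$ — and the boundedness of $\frac{\Phi(xM)}{xM}$ near $0$, which is already recorded in the proof of Theorem \ref{t:mainvex}.
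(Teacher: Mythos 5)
Your proof is correct and matches the paper's argument: the paper likewise assumes the bound on $(0,\eta]$, observes that the HJB equation then reduces to the ODE \eqref{e:vp22} there, applies Proposition \ref{p:asymptotic2} to force $V_x^*\to\pm\infty$ at the origin, and derives the contradiction from $0\leq V_x^*\leq\frac{\Phi(xM)}{xM}$ being bounded near $0$ (the paper records this via $\sup_{x\in(0,\eta]}\frac{\Phi(xM)}{xM}=\frac{\Phi(\eta M)}{\eta M}$). Your extra verifications, that $\rho-\Phi(Mx)>0$ for small $x$ and that $\frac{\Phi(xM)}{xM}$ has a finite limit at $0$, are exactly the points the paper leaves implicit.
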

\begin{proof}
Noting that $\sup_{x\in(0,\eta]} \frac{\Phi(xM)}{xM} = \frac{\Phi(\eta M)}{\eta M}$, the proof is similar to the proof of Proposition \ref{p:zero}, relying on the application of Proposition \ref{p:asymptotic2} to equation (\ref{e:vp22}).
\end{proof}
\begin{prop}\label{p:infinity2}
There does not exist any $\chi>0$ such that $V_x^*(x)\geq \frac{\Phi(xM)}{xM}$ for all $x\geq \chi$.
\end{prop}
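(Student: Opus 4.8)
The plan is to transcribe the proof of Proposition \ref{p:infinity} almost verbatim: in the branch of the HJB equation \eqref{e:HJB2} governing the region where $V_x^*\geq\frac{\Phi(xM)}{xM}$, the yield term $\Phi$ drops out entirely, so the argument is insensitive to the shape of $\Phi$. First I would argue by contradiction and suppose there is $\chi>0$ with $V_x^*(x)\geq\frac{\Phi(xM)}{xM}$ for all $x\geq\chi$. Writing $\varphi_1:=V_x^*$, equation \eqref{e:HJB2} forces $\rho = x\mu(x)\varphi_1(x)+\frac{1}{2}\sigma^2 x^2\varphi_1'(x)$, i.e.
\[
\frac{d\varphi_1}{dx}(x)+\frac{2\mu(x)}{\sigma^2 x}\,\varphi_1(x)=\frac{2\rho}{\sigma^2 x^2},\qquad x\geq\chi,
\]
which is exactly the ODE appearing in the proof of Proposition \ref{p:infinity}.

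The rest follows that proof step for step. By integrating factors one obtains the representation
\[
\varphi_1(x)=\frac{\zeta(x_0)\varphi_1(x_0)+\int_{x_0}^{x}\zeta(y)\beta(y)\,dy}{\zeta(x)},
\]
with $\beta(y):=\frac{2\rho}{\sigma^2 y^2}$ and $\zeta(x):=\exp\!\left(\int_{x_1}^{x}\frac{2\mu(y)}{\sigma^2 y}\,dy\right)$ for arbitrary $x_0,x_1\in(\chi,\infty)$. Using Assumption \ref{A:1} (namely $\mu(x)\to-\infty$), I would pick $c>\sigma^2/2$ and $x_1=L$ with $\mu(y)<-c$ for $y>L$, so that $x\zeta(x)\leq x^{1-2c/\sigma^2}\to0$ as $x\to\infty$. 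Then I split into two cases. If $\zeta(x_0)\varphi_1(x_0)+\int_{x_0}^{\infty}\zeta(y)\beta(y)\,dy>0$, the representation together with $x\zeta(x)\to0$ and $\zeta>0$ gives $\lim_{x\to\infty}V_x^*(x)/x=+\infty$, contradicting the growth bound \eqref{e.growth}. If instead $\zeta(x_0)\varphi_1(x_0)+\int_{x_0}^{\infty}\zeta(y)\beta(y)\,dy\leq0$, then for $x>x_0$
\[
\zeta(x_0)\varphi_1(x_0)+\int_{x_0}^{x}\zeta(y)\beta(y)\,dy\leq-\int_{x}^{\infty}\zeta(y)\beta(y)\,dy<0,
\]
so $V_x^*(x)=\varphi_1(x)<0$, contradicting $V_x^*(x)\geq\frac{\Phi(xM)}{xM}\geq0$ (equivalently \eqref{increase}). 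Either way we reach a contradiction, which proves the proposition.

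I do not expect a substantive obstacle here; the only point worth checking beyond a literal copy is why the displayed ODE holds on all of $[\chi,\infty)$ rather than merely on the open set $\{V_x^*>\frac{\Phi(xM)}{xM}\}$. This is because the two branches of \eqref{e:HJB2} coincide whenever $f_x=\frac{\Phi(xM)}{xM}$: substituting that value, the extra terms $-xMf_x+\Phi(xM)$ distinguishing the second branch cancel, since $xM\cdot\frac{\Phi(xM)}{xM}=\Phi(xM)$, so $\varphi_1=V_x^*$ satisfies the no-harvest ODE throughout the region $\{V_x^*\geq\frac{\Phi(xM)}{xM}\}$. One also uses that the growth bound \eqref{e.growth} is established in Theorem \ref{thm2.1d} for \emph{any} $\Phi$ satisfying Assumption \ref{A:2}, hence remains available in the convex setting without modification.
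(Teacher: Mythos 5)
Your proof is correct and is essentially the paper's own argument: the paper's proof of Proposition \ref{p:infinity2} simply states that it follows the proof of Proposition \ref{p:infinity} without change because $\frac{\Phi(xM)}{xM}\geq 0$, which is exactly the transcription you carry out (the yield term vanishes from the relevant branch of \eqref{e:HJB2}, and the final contradiction only needs nonnegativity of the threshold curve). Your added remark on why the no-harvest ODE holds on the closed region $\{V_x^*\geq\frac{\Phi(xM)}{xM}\}$ — the two branches coincide on the boundary — is a sensible clarification but not a departure from the paper's route.
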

\begin{proof}
It follows the proof of Proposition \ref{p:infinity} without change, because $\frac{\Phi(xM)}{xM} \geq 0$.
\end{proof}
\begin{figure}
\begin{center}
\begin{tikzpicture}[scale=1, line width = 1.2 pt]
\fill[red!20!white] (2.5,-0.2) rectangle +(4,6.5);
\draw[->] (0.5,0) -- (0.5,7) ;
\draw (0.5,7.2) node {$y$};
\draw[->](0,2) -- (12,2);
\draw (0.2,1.7) node {0};
\draw (0,2.4) node {$\Phi'(0)$};
\draw [name path=phi, line width =1.2pt] (0.5,2.4) .. controls (1.5,2.6) and (1.5,2.6)..(2.5,3.2);
\draw [name path=phi, color = red!80!black, line width =1.2pt] (2.5,3.2) .. controls (3.5,3.8) and (4.7,3.7)..(6.5,4.2);
\draw [name path=phi2, line width =1.2pt] (6.5,4.2) .. controls (8.3,4.7) and (9,5.5) .. (11,6);
\draw (12,6) node {$\frac{\Phi(xM)}{xM}$};
\draw (12.2,2) node {$x$};
\draw [name path=search utility, blue!70!black, line width =1.2pt] (7.5,2.3) .. controls (8.5,2.2) and (10,2.2)..(11,2.2);
\draw [name path=search utility, blue!70!black, line width =1.2pt] (7.5,2.3) .. controls (5.5,2.4) and (4.6,3.3)..(4,4);
\draw [name path=search utility, blue!70!black, line width =1.2pt] (2,3.5) .. controls (2.5,3) and (3.4,4.7)..(4,4);
\draw [name path=search utility, blue!70!black, line width =1.2pt] (2,3.5) .. controls (1.8,3.7) and (1.2,4)..(1,6.5);
\draw[red!90!black] (4.4,0.8) node {$V^*_{xx} \leq \left(\frac{\Phi(xM)}{xM}\right)'$};
\draw[red!90!black] (4.4,0.1) node {~ if $V^*_{x}$ intersects $\frac{\Phi(xM)}{xM}$};
\draw[line width = 0.8, dotted] (4.23,2) -- (4.23,3.75);
\draw (2.3,1.7) node {$\alpha_1$};
\draw (6.8,1.7) node {$\alpha_2$};
\draw (4.23,1.7) node {$x^*$};
\draw (12,2.4) node {$y=V^*_x(x)$};
\end{tikzpicture}
\caption{The only case which doesn't lead to a contradiction is when $V^*_x$ crosses $\frac{\Phi(xM)}{xM}$ only once and the crossing is from above.}
\label{fig:boundarycrossing4}
\end{center}
\end{figure}
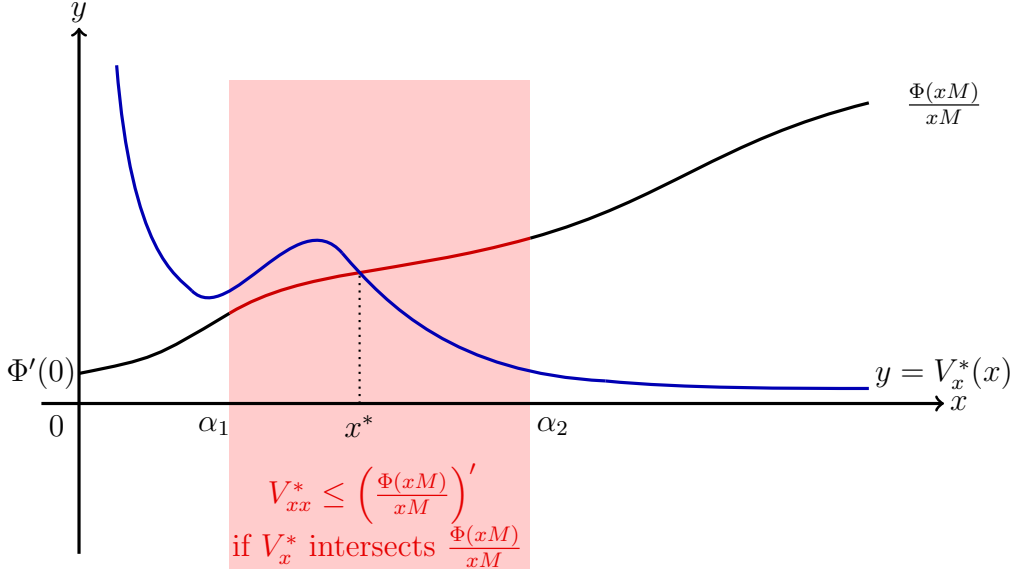
\begin{prop}\label{p:crossing2}
The function $V^*_x$ intersects the curve $\frac{\Phi(xM)}{xM}$ at most three times on $[0,\infty)$.
\end{prop}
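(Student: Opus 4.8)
The plan is to follow the proof of Proposition~\ref{p:crossing} almost verbatim, replacing the constant line $y=1$ by the curve $\psi(x):=\frac{\Phi(xM)}{xM}$ and the auxiliary function $g$ by a shift of the function $G$ from \eqref{e:G}. Write $\varphi:=V_x^*$; this is $C^1$ on $(0,\infty)$ since $V^*\in C^2$ by Theorem~\ref{thm2.1d}. Suppose $x_0>0$ satisfies $\varphi(x_0)=\psi(x_0)$. At such a point the second branch of \eqref{e:HJB2} (the one valid when $f_x\le\psi$) applies, so
\[
\rho = x_0(\mu(x_0)-M)\varphi(x_0)+\tfrac12\sigma^2 x_0^2\varphi'(x_0)+\Phi(x_0M);
\]
inserting $\varphi(x_0)=\Phi(x_0M)/(x_0M)$ and simplifying gives $\tfrac12\sigma^2 x_0^2\varphi'(x_0)=\rho-\frac{\mu(x_0)\Phi(x_0M)}{M}$. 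Since a direct computation yields $\psi'(x_0)=\frac{x_0M\Phi'(x_0M)-\Phi(x_0M)}{Mx_0^2}$, subtracting and clearing denominators produces the key identity
\[
\varphi'(x_0)-\psi'(x_0) = \frac{1}{Mx_0^2}\Big(G(x_0)+\tfrac{2M\rho}{\sigma^2}\Big).
\]
Hence at \emph{every} intersection point the sign of $\varphi'-\psi'$ equals the sign of $G+\tfrac{2M\rho}{\sigma^2}$: $\varphi$ crosses $\psi$ from below where $G+\tfrac{2M\rho}{\sigma^2}>0$ and from above where $G+\tfrac{2M\rho}{\sigma^2}<0$. (Note $\tfrac{2M\rho}{\sigma^2}>0$ since $\rho=\rho^*>0$.)

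Next I would invoke assumption~(iv). Since $G$ has a unique extreme point on $(0,\infty)$, which is a minimum, $G$ is quasi-convex, so $\{x>0:G(x)+\tfrac{2M\rho}{\sigma^2}<0\}$ is a single interval $(\alpha_1,\alpha_2)$ (possibly empty, possibly unbounded), whose complement in $(0,\infty)$ is the union of the at most two intervals $(0,\alpha_1)$ and $(\alpha_2,\infty)$ on which $G+\tfrac{2M\rho}{\sigma^2}>0$. By the key identity, every intersection lying in $(0,\alpha_1)$ or in $(\alpha_2,\infty)$ is transversal from below, and every intersection in $(\alpha_1,\alpha_2)$ is transversal from above. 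A soft argument then caps the number of intersections in each of these three intervals at one: two intersections in the same interval, both from below, would force $\varphi-\psi$ to return to zero in between, hence an intermediate intersection in that interval with the opposite (from-above) orientation, contradicting that all intersections there are from below — and symmetrically for the from-above interval. Summing over the three intervals gives at most three intersections, which is the claim; the same analysis records how these crossings are oriented, as in Figure~\ref{fig:boundarycrossing4}.

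The one genuinely delicate point — exactly as in the linear case, compare the remark after Proposition~\ref{p:crossing} — is a crossing at a point where $G+\tfrac{2M\rho}{\sigma^2}=0$, i.e.\ at $\alpha_1$ or $\alpha_2$: there the identity only gives $\varphi'=\psi'$, so the crossing is a priori tangential and its orientation must be inferred from the adjacent regions. I would handle this by the same bookkeeping used there: for instance, if $\varphi$ has already crossed $\psi$ from below somewhere in $(0,\alpha_1)$ and also meets $\psi$ at $\alpha_1$, that meeting is necessarily a crossing from above, which then rules out any crossing on $(\alpha_1,\alpha_2)$; running through the finitely many such configurations never produces more than three crossings. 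I expect this degenerate-boundary bookkeeping to be the main obstacle; the substantive content of the proof is the sign identity for $\varphi'-\psi'$ above, which is the exact analogue of the relation $g(x_0)=\tfrac12\sigma^2x_0^2\varphi'(x_0)$ used in the linear case.
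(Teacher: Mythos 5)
Your proposal is correct and follows essentially the same route as the paper: both derive from the second branch of \eqref{e:HJB2} that at any intersection the sign of $\varphi'-\psi'$ is the sign of $G(x)+\tfrac{2M\rho}{\sigma^2}$, use assumption (iv) to split $(0,\infty)$ into at most three intervals of constant crossing orientation, and conclude at most one crossing per interval. Your explicit computation of the identity $\varphi'(x_0)-\psi'(x_0)=\tfrac{1}{Mx_0^2}\bigl(G(x_0)+\tfrac{2M\rho}{\sigma^2}\bigr)$ and your handling of the tangential boundary case are slightly more detailed than the paper's, but the argument is the same.
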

\begin{proof}
By \eqref{e:HJB2} if we set $f_x:=\vp$, then at the intersections $x:\; \vp(x) = \frac{\Phi(xM)}{xM}$ we have
\[
\varphi_x = \frac{2}{\sigma^2x^2}\left(\rho-x\mu(x)\frac{\Phi(xM)}{xM}\right),
\]
from the HJB equation. Now we want to compare $\vp_x$ with $\left(\frac{\Phi(xM)}{xM}\right)'$ whenever there is a crossing, to infer the direction from which $\vp$ is crossing. To do that, consider the equation $\vp_x = \left(\frac{\Phi(xM)}{xM}\right)'$. Substituting and simplifying gives us the condition $G(x) + \frac{2M\rho}{\sigma^2} = 0$. Since $G(x)$ has only one extremum by assumption, this equation has zero, one or two solutions. When there are two solutions, say $\alpha_1, \alpha_2$, any intersection of $\vp$ with $\frac{\Phi(xM)}{xM}$ for $x \in (\alpha_1,\alpha_2)$ will have to be with $\vp$ coming from above, as $\vp_x <0$ in that interval.
Using similar arguments to those in Proposition \ref{p:crossing}, this implies, together with the condition on $G$ from \eqref{e:G}, that $\varphi$ can intersect $\frac{\Phi(xM)}{xM}$ at most three times.
\end{proof}
The rest of the proof also mirrors the one of Theorem \ref{t:main}. Apply the four results above and find again that the optimal control is bang-bang with a single threshold $x^*$,
\begin{equation*}
\begin{aligned}
v(x)
&= \begin{cases}
0 & \mbox{if $0< x\leq x^*$}, \\
M & \mbox{if $x>x^*$}.
\end{cases}
\end{aligned}
\end{equation*}
for some $x^*\in (0,\infty)$ (see Figure \ref{fig:boundarycrossing4}).
\end{proof}


\end{document}